\definecolor{vertfonce}{rgb}{0.20, 0.46, 0.25}
\definecolor{rougefonce}{rgb}{0.64, 0.09, 0.20}
\author{L. Morin}
\email{leo.morin@ens-rennes.fr}
\address{Aarhus Univ., Ny Munkegade 118, DK-8000 Aarhus C, Denmark}
\author{N. Raymond}
\email{nicolas.raymond@univ-angers.fr}
\address{Univ Angers, CNRS, LAREMA, SFR MATHSTIC, F-49000 Angers, France}
\author{S. V\~u Ng\d{o}c}
\email{san.vu-ngoc@univ-rennes1.fr}
\address{Univ Rennes, CNRS, IRMAR - UMR 6625, F-35000 Rennes, France}
\title[]{Eigenvalue asymptotics for confining magnetic Schrödinger operators with complex potentials}
\tikzset{cross/.style={cross out}, minimum size=1pt, draw=black, inner sep =0pt, outer sep=0pt, cross/.default={1pt}}
\theoremstyle{plain}
\newtheorem{theorem}{Theorem}[section]
\newtheorem{lemma}[theorem]{Lemma}
\newtheorem{corollary}[theorem]{Corollary}
\newtheorem{proposition}[theorem]{Proposition}
\theoremstyle{definition}
\newtheorem{remark}[theorem]{Remark}
\newtheorem{notation}{Notation}
\newtheorem{definition}[theorem]{Definition}
\newtheorem{assumption}{Assumption}
\newcommand{\A}{\mathbf{A}}
\newcommand{\R}{\mathbf{R}}
\newcommand{\C}{\mathbf{C}}
\newcommand{\N}{\mathbf{N}}
\newcommand{\spectrum}{\mathsf{sp}}
\newcommand{\supp}{\mathsf{supp}}
\renewcommand{\Re}{\mathrm{Re}\,}
\renewcommand{\Im}{\mathrm{Im}\,}
\renewcommand{\leq}{\leqslant}	\renewcommand{\geq}{\geqslant}
\newcommand{\Lh}{\mathscr{L}_h}
\newcommand{\Loh}{\widehat{\mathscr{L}}^0_h}
\newcommand{\Peff}{\mathsf{P}^{\mathsf{eff}}_h}
\newcommand{\Op}{\mathsf{Op}}
\newcommand{\Oph}{\mathsf{Op}_h}
\newcommand{\Ld}{\mathsf{L}^{2}}
\newcommand{\Cinf}{\mathcal{C}^{\infty}}
\newcommand{\grandO}{\mathscr{O}}
\newcommand{\dd}{\mathrm{d}}
\newcommand{\abs}[1]{\left|#1\right|}
\newcommand{\norm}[1]{\left\|#1\right\|}
\newcommand{\rond}[1]{\accentset{\circ}{#1}}
\begin{document}

\maketitle

\begin{abstract}
This article is devoted to the spectral analysis of the electro-magnetic Schrödinger operator on the Euclidean plane. In the semiclassical limit, we derive a pseudo-differential effective operator that allows us to describe the spectrum in various situations and appropriate regions of the complex plane. Not only results of the selfadjoint case are proved (or recovered) in the proposed unifying framework, but new results are established when the electric potential is complex-valued. In such situations, when the non-selfadjointness comes with its specific issues (lack of a "spectral theorem", resolvent estimates), the analogue of the "low-lying eigenvalues" of the selfadjoint case are still accurately described and the spectral gaps estimated.
\end{abstract}

\section{Introduction}

\subsection{Context and motivation}

In this article we study the spectrum of the non-selfadjoint
electromagnetic Schrödinger operator:
\begin{equation}\label{eq.mainLh}
\mathscr{L}_{h} =(-ih\nabla-\mathbf{A})^2 + h V(q_1,q_2)\,,
\end{equation}
which is an unbounded differential operator on $L^2(\R^2)$. We are particularly concerned by the
semiclassical limit $h\to 0$. Here $\mathbf{A} : \R^2\to\R^2$ is a
smooth vector potential and $V : \R^2 \rightarrow \C$ a smooth \emph{complex}
scalar potential. The associated magnetic field
$B=\partial_1A_2-\partial_2A_1$ is assumed to be positive and to
belong to the class of \emph{bounded symbols}
\[S_{\R^2}(1)=\{f\in\mathscr{C}^\infty(\R^2;\C) : \forall\alpha\in\N\,,\exists C>0\,, |\partial^\alpha f|\leq C\}\,.\]
We also assume that the complex perturbation $V$ belongs to this class, \emph{i.e.}, $V \in S_{\R^2}(1)$, see Assumptions \ref{Assumption.B.positif} and \ref{Assumption.classeS}. The chosen order of magnitude of the electric interaction $hV$ is precisely when the magnetic and electric fields are in competition, as we will see in our results.

When $V=0$, the low-lying spectrum of this operator has been studied in several papers, and summarized in the books \cite{FournaisHelffer, Raymond}. In particular, when the magnetic field has a unique minimum $b_0 >0$, which is non-degenerate and not attained at infinity, it was proved in \cite{HelKor14-2} that
\[\lambda_n(h)=b_0 h+((2n-1)c_0+c_1)h^2+o(h^2)\,,\]
where $c_1\in\R$ and $c_0= b_0^{-1} \vert \frac{1}{2}\nabla^2 B(q_0) \vert^{1/2}$.

Such operators as \eqref{eq.mainLh} appear for instance in the context of the
time-dependent Ginzburg-Landau equations, see~\cite{ALP-R2}. These
equations involve a damping term related to an induced current. Their
linearization near a normal state gives rise to a propagation equation
whose generator is an electromagnetic Schrödinger operator with
complex electric potential. Its left-most eigenvalue governs the large
time decay of the associated semi-group and thus the stability of the
normal state.

Our analysis sets up a unifying (semiclassical) framework to study the "low-lying"
eigenvalues of operators of the form $\mathscr{L}_{h}$, including the
magnetic Laplacian itself \cite{HelKor14-2,Birkhoff2D} and some of its
selfadjoint perturbations (see the recent work \cite{Yoshida}). But the most
interesting novelty of our strategy is to cover also the case of \emph{imaginary}
electric potentials. More precisely, to the authors’ knowledge, the present paper is the first
to obtain precise eigenvalue asymptotics in the presence of strong\footnote{The perturbation is not assumed to be small at infinity and plays at the same scale as the magnetic Laplacian.} perturbations of the magnetic Schrödinger operator in the semiclassical limit. Note that, in non-asymptotic settings, complex perturbations of the magnetic Laplacian (and of the magnetic Dirac operator) have been considered in \cite{FKV18, CFK20}, where it is proved that there are no eigenvalues when the electro-magnetic field is sufficiently decaying at infinity. For decaying electric potentials with a (quasi)constant magnetic field, Weyl estimates have also been established for the Pauli operator in \cite{Sambou2,Sambou}.

In general, it is well-known that even small non-selfadjoint perturbations of selfadjoint operators can have a dramatic effect on
the spectrum. In the present context, this problem is all the more appealing that the magnetic Laplacian comes with its own issues (such as its lack of ellipticity). To overcome these combined difficulties, our approach is based on a microlocal dimensional
reduction (involving operator-valued symbols, see for instance
\cite{Martinez07,Keraval}, and also \cite{BHR21} for a recent application of the strategy in a self-adjoint context). It allows us to explore the spectral
structure in a disk corresponding to the location of the low-lying
eigenvalues in the selfadjoint case. More precisely, when the
perturbation $V$ is turned on, we describe how the
spectrum moves, in a disc $D(\mu_0 h, C h^2)$, where $\mu_0$ depends on the electromagnetic field. In this disc, the fine structure of the spectrum is accurately described by estimating the splitting between the eigenvalues. Our main result is stated in Theorem \ref{Main-Thm}, whereas its various (and sometimes non-trivial) applications are given in Section \ref{sec.corollaries}.

\subsection{Main result}

In this article we will make the following assumptions. 

\begin{assumption}\label{Assumption.B.positif}
  The magnetic field is non-vanishing: there exists $b_0 >0$ such
  that $$\forall q \in \R^2, \quad B(q) \geq b_0 >0.$$
\end{assumption}

\begin{assumption}
  \label{Assumption.classeS}
  The functions $B$, $V$ and
  \[
    q \mapsto \int_0^{q_1} \frac{\partial B(s,q_2)} {\partial {q_2}}
    \dd s
  \]
  are all in $S_{\R^2}(1)$.
\end{assumption}

\begin{assumption}\label{Assumption}
  There exist $u,v \in \R$, $u > 0$ such that the function
  \[ F = u (B+\mathsf{Re}(V)) + v \mathsf{Im}(V) =
    \mathsf{Re}((u-iv)(B+V))\] admits a unique global minimum, not
  reached at infinity. We denote by $\mu_0 \in \C$ the value of $B+V$
  at the minimum of $F$. It satisfies
  \[
    \mathsf{Re}((u-iv)\mu_0) = u \mathsf{Re}\mu_0 + v \mathsf{Im}
    \mu_0 = \min_{q \in \R^2} F(q).
  \]
\end{assumption}

The function $F$ should be interpreted as the localizing function for
our operator: it gives information on where the spectrum should
lie. These assumptions imply discreteness of the spectrum in a disc
$D(\mu_0 h, C h^2)$ and localization of the associated eigenfunctions
(Proposition \ref{prop.Lh.Fredholm} and Lemma
\ref{lem.microlocalisation}). Here are some interesting particular
cases where Assumption \ref{Assumption} holds:
\begin{enumerate}
\item[1.] $B + \mathsf{Re}(V)$ admits a unique global minimum, not reached at infinity, and $\mathsf{Im}(V)$ is arbitrary (take $u=1$ and $v=0$).\\
  
\item[2.] $\mathsf{Im}(V)$ admits a unique global minimum, not reached at infinity, and $B + \mathsf{Re}(V)$ is constant (take $u=1$ and $v=1$).
\end{enumerate}

\begin{remark}
  It may happen that there exist two different couples $(u,v)$ for
  which Assumption~\ref{Assumption} holds, and for which the
  corresponding minima of $F$ are attained at two different locations
  $q_0$ and $q'_0$. Then it follows from the assumptions that the
  values $\mu_0$ and $\mu_0'$ must be different as well. Hence our
  analysis will give the description of the spectrum of
  $\mathscr{L}_h$ in two different regions in the complex plane.
Here is an example of such a situation (see Figure \ref{fig.0}). Assume that the electric field is purely imaginary, equal to $iV(q)$ and that magnetic field is $B=1-w(q)$, where $w$ and $V$ have disjoint compact supports, $0 \leq w < 1$ and $V \geq 0$. If $w$ (resp. $V$) has a unique and non-degenerate maximum reached at $q_0$ (resp. reached at $q_0'$) then the functions $F= 1 - w(q)$ ($u=1$, $v=0$) and $F'= 1 - w(q) - V(q)$ ($u=1$, $v=-1$) satisfy our assumption with respective minima $1 - w(q_0)$ and $1 - \max( w(q_0), V(q_0'))$. If $V(q_0') > w(q_0)$ these minimas are reached at $q_0$ and $q_0'$ respectively and the corresponding values of $B+iV$ are 
\[ \mu_0 = 1-w(q_0) \,, \quad \mu_0' = 1+iV(q_0') \,.\]
Note that the value of $F$ at its minimum plays no role in the spectral description. The interesting quantity is $\mu_0$.

\begin{center}
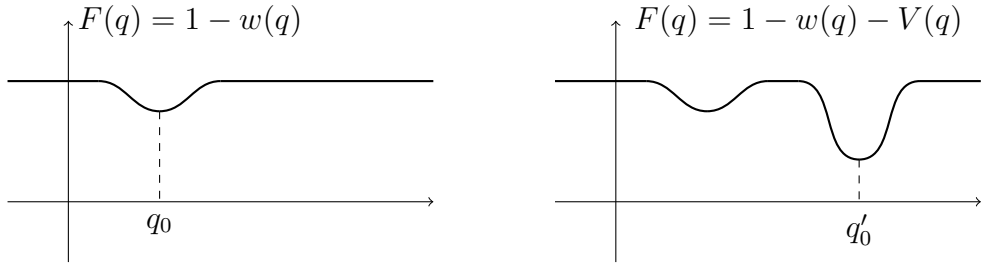
\begin{figure}[ht!]
\begin{tikzpicture}[scale=0.8]\label{fig:example1}
\node[] at (2,3) {$F(q)=1-w(q)$};
\draw[->] (-1,0)--(6,0);
\draw[->] (0,-1)--(0,3);
\draw[thick] (-1,2)--(0.5,2);
\draw[thick] (0.5,2) to[out=0, in=180] (1.5,1.5);
\draw[thick] (1.5,1.5) to [out=0, in=180] (2.5,2);
\draw[thick] (2.5,2)--(6,2);
\draw[dashed] (1.5,1.5)--(1.5,0);
\node[below] at (1.5,-0.2) {$q_0$};

\node[] at (12,3) {$F(q)=1-w(q)-V(q)$};
\draw[->] (8,0)--(15,0);
\draw[->] (9,-1)--(9,3);
\draw[thick] (8,2)--(9.5,2);
\draw[thick] (9.5,2) to[out=0, in=180] (10.5,1.5);
\draw[thick] (10.5,1.5) to [out=0, in=180] (11.5,2);
\draw[thick] (11.5,2)--(12,2);
\draw[thick] (12,2) to[out=0,in=180] (13,0.7);
\draw[thick] (13,0.7) to[out=0, in=180] (14,2);
\draw[thick] (14,2)--(15,2);
\draw[dashed] (13,0.7)--(13,0);
\node[below] at (13,-0.2) {$q_0'$};

\end{tikzpicture}
\caption{Two functions $F$}\label{fig.0}
\end{figure}
\end{center}

\end{remark}

In this article, we compare the spectrum of $\Lh$ in $D(\mu_0h, Ch^2)$
with the spectrum of an effective operator $h \Peff$ acting on
$L^2(\R)$, which has the following form.
Let us denote by $\varphi : \R^2 \rightarrow \R^2$ the  diffeomorphism:
\begin{equation}\label{eq.diffeo}
(\xi,x) = \varphi (q) = \left( \int_0^{q_1} B(s,q_2) \dd s \,, q_2 \right) \,,
\end{equation}
and $\rond{B}(\xi,x) = B \circ \varphi^{-1}(\xi,x)$,
$\rond{V}(\xi,x) = V \circ \varphi^{-1}(\xi,x)$. Then
 $\Peff = \Oph^w \mu^{\mathsf{eff}}_h$ is an $h$-pseudodifferential operator
with a symbol in $S_{\R^2}(1)$ of the form
\begin{equation}
\mu^{\mathsf{eff}}_h(x,\xi) = \rond{B}(\xi,x) + \rond{V}(\xi,x)+ h \mu_1(x,\xi),
\end{equation}
where the subprincipal term $\mu_1(x, \xi)$ has an explicit formula
described in \eqref{eq.mu-eff}. 

Our main result, Theorem \ref{Main-Thm}, states that, provided that
the eigenvalues and resolvent of $\Peff$ are sufficiently well
controled near $\mu_0$, the spectrum of $h \Peff$ in that region
closely approximate the spectrum of $\Lh$.

\begin{theorem}\label{Main-Thm}
  Let Assumptions \ref{Assumption.B.positif}, \ref{Assumption.classeS},
  and \ref{Assumption} hold. Let $0<C<C'$. Assume the following:
  \begin{enumerate}[\rm (a)]
  \item\label{thm.a} There exist $c,h_0>0$ such that, for $h \in (0,h_0)$, the
    spectrum of $\Peff$ in $D(\mu_0,C'h)$ consists of a family of
    discrete eigenvalues $(\nu_j(h))_{1 \leq j \leq N}$ of algebraic
    multiplicity 1, such that
\[\forall (k,\ell) \in \lbrace 1, \cdots , N \rbrace^2, \quad k \neq \ell \Rightarrow  \vert \nu_k(h) - \nu_{\ell}(h) \vert \geq c h\,.\]

\item There exist $\kappa\in(0,\frac{1}{2})$,
  $h_0$, and $C_0 >0$ such that, for $h \in (0,h_0)$, for any
  $z \in D(\mu_0,Ch)$ satisfying
  $\mathsf{dist}(z,\spectrum \ \Peff)\geq h^{\frac 3 2 - \kappa}$,
  \begin{equation}\label{eq.Res-bound}
    \Vert (z- \Peff)^{-1} \Vert \leq
    \frac{C_0}{\mathsf{dist}(z,\spectrum \ \Peff)}\,.
  \end{equation}

\end{enumerate}
Then, for $h$ small enough, the spectrum of $\Lh$ in $D(\mu_0 h,Ch^2)$ consists of a family of discrete eigenvalues $(\lambda_j(h))_{1 \leq j \leq N}$ of algebraic multiplicity 1, such that
$$\lambda_j(h) = h \nu_j(h) + \grandO(h^{\frac{5}{2}- \kappa}).$$ 
\end{theorem}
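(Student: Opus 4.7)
My approach is to build a microlocal parametrix relating $(\Lh - z)^{-1}$ to $(h\Peff - z)^{-1}$ for $z$ in the disc $D(\mu_0 h, C h^2)$, and then to deduce the fine spectral correspondence through a Riesz projector argument combined with Assumptions~(a) and~(b).

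The first step is the dimensional reduction. Using the diffeomorphism $\varphi$ from \eqref{eq.diffeo} and a suitable unitary quantization, $\Lh$ should be conjugated---microlocally near the minimum of $F$, thanks to Lemma \ref{lem.microlocalisation}---to an $h$-pseudodifferential operator on $\R^2$ with operator-valued Weyl symbol acting on $\Ld(\R_{x_1})$. The principal symbol at $(x,\xi)$ is a shifted harmonic-oscillator-type operator whose first eigenvalue equals $\rond{B}(\xi,x)+\rond{V}(\xi,x)$, and whose higher eigenvalues are separated from it by at least $2b_0$ by Assumption \ref{Assumption.B.positif}. This spectral gap enables a Feshbach--Grushin reduction in the spirit of \cite{Martinez07, Keraval, BHR21}: one introduces a projector onto the ground-state subspace and inverts the transverse part by ellipticity. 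The resulting scalar operator on $\Ld(\R)$ is exactly $h\Peff$ modulo lower-order errors, the leading correction being the $h\mu_1$ term in $\mu^{\mathsf{eff}}_h$.

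The second step is the quantitative parametrix. For $z \in D(\mu_0 h, Ch^2)$ I build an approximate inverse of the form
\[
\tilde{R}(z) = P^{*}(h\Peff - z)^{-1} P + K(z),
\]
where $P$ realizes the projection onto the ground-state sector and $K(z)$ is the transverse parametrix, bounded uniformly in $z$. Pushing the operator-valued symbol calculus one order beyond the principal reduction, one should obtain $(\Lh - z)\tilde{R}(z) = I + E(z)$ with $\snorm{E(z)} = \grandO(h^{5/2 - \kappa})$, uniformly for $z \in D(\mu_0 h, C h^2)$ with $\dist(z, h\,\spectrum\,\Peff) \geq h^{5/2 - \kappa}$. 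Combined with Assumption~(b), which after rescaling yields $\snorm{(h\Peff - z)^{-1}} \leq C_0 h^{-5/2 + \kappa}$ on this set, a Neumann series shows that $\Lh - z$ is invertible there, and therefore
\[
\spectrum \Lh \cap D(\mu_0 h, Ch^2) \subset \bigcup_{j=1}^{N} \overline{D}\bigl(h\nu_j(h), h^{5/2 - \kappa}\bigr).
\]

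The final step is the counting argument. Assumption~(a) gives a separation of order $h$ between the $\nu_j(h)$, so for $h$ small enough the discs $\overline{D}(h\nu_j(h), h^{5/2 - \kappa})$ are pairwise disjoint and contained in $D(\mu_0 h, Ch^2)$. Enclosing each one by a circle $\Gamma_j$ of radius of order $h^{5/2-\kappa}$ on which the above estimates hold, the Riesz projectors
\[
\Pi_j^{\Lh} = -\frac{1}{2\pi i}\oint_{\Gamma_j}(\Lh - z)^{-1}\,\dd z, \qquad \Pi_j^{\Peff} = -\frac{1}{2\pi i}\oint_{\Gamma_j}(h\Peff - z)^{-1}\,\dd z
\]
are compared through $\tilde{R}(z)$, and a standard trace argument shows that their ranks coincide. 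Since $\Pi_j^{\Peff}$ has rank $1$ by~(a), so does $\Pi_j^{\Lh}$, producing the unique simple eigenvalue $\lambda_j(h) \in \overline{D}(h\nu_j(h), h^{5/2-\kappa})$ and hence the asymptotics. The main obstacle, I expect, is the sharp remainder $\snorm{E(z)} = \grandO(h^{5/2-\kappa})$: attaining this exponent requires not only the principal Feshbach reduction, but also a precise treatment of the subprincipal correction $h\mu_1$ and of all remainders in the operator-valued symbol calculus, with the microlocalization from Lemma \ref{lem.microlocalisation} essential in order to discard the contributions at infinity where the transverse parametrix would otherwise fail.
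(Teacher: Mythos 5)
Your strategy is essentially the paper's: a microlocal conjugation of $\Lh$ into a pseudodifferential operator with operator-valued symbol whose principal part is an $X_2$-dependent harmonic oscillator in the transverse variable $(x_1,\xi_1)$, a Grushin/Feshbach reduction to the scalar effective operator $\Peff$, and a Riesz-projector rank comparison to transfer eigenvalue counts and asymptotics. There is, however, a quantitative gap in your second step. You claim a parametrix error $\snorm{E(z)} = \grandO(h^{5/2-\kappa})$ on the set $\{z\in D(\mu_0 h, Ch^2) : \dist(z, h\,\spectrum\,\Peff)\geq h^{5/2-\kappa}\}$; there the resolvent bound gives $\snorm{(h\Peff-z)^{-1}}\leq C_0 h^{-5/2+\kappa}$, so $\snorm{E(z)}\cdot\snorm{(h\Peff-z)^{-1}} = \grandO(1)$ and the Neumann series you invoke need not converge. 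What is required is a remainder that is $o(h^{5/2-\kappa})$. The paper achieves this by inserting microlocal cutoffs $\chi_\delta$ at scale $h^\delta$ in the rescaled symbol, expanding in powers of $h^{1/2}$ rather than $h$, and choosing $\delta > \max(\tfrac12-\tfrac\kappa3, \tfrac13)$ (see~\eqref{equ:delta}): the resulting error $\grandO(h^{3\delta})$ is then \emph{strictly} dominated by $h^{3/2-\kappa}$ at the unscaled level, ensuring the Neumann series closes.

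Two further points you mention only in passing are in fact non-trivial parts of the argument. First, the apparent $h^{1/2}$-order term in the effective symbol must be shown to vanish; this happens via a parity argument (Lemma~\ref{lem.P1.flat}), without which the reduction would not match $\Peff$ at $\grandO(h)$. Second, the insertion of the cutoffs $\chi_\delta$ means you actually work with the truncated operator $\widehat{\mathscr{L}}_h$, and you must at the end show that its spectrum in the target disc coincides, with algebraic multiplicities, with that of the untruncated $\Loh$; this is the content of Section~\ref{sec.6.remove.chi}, which combines the microlocalization lemma with the ellipticity at infinity in $X_1$ through a microlocal partition of unity (Proposition~\ref{prop.Lh.to.Lh0}). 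Also note that in the complex case the transverse gap is not just the real gap $2b_0$; the invertibility of $\mathsf{P}_0(X_2)-z$ on $f_{X_2}^\perp$ for $z\in D(\mu_0,Ch)$ hinges on the $(u,v)$ structure of Assumption~\ref{Assumption} (Lemma~\ref{lem.bijpiperp}).
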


\begin{center}
\begin{tikzpicture}[scale=0.9]\label{fig:Landau-levels}
\draw[dashed] (0,0) circle (4);
\node[below] at (0,-0.1) {$h\mu_0$};
\draw[fill=black] (0,0) circle (0.05);

\node[below,color=blue] at (1.2,0.9) {$h \nu_1$};
\draw[fill=blue] (0.5,1) circle (0.05);
\draw[] (0.5,1) circle (0.4);

\node[below,color=blue] at (1.9,1.6) {$h \nu_2$};
\draw[fill=blue] (1.2,1.7) circle (0.05);
\draw[] (1.2,1.7) circle (0.4);

\node[below,color=blue] at (2.6,2.3) {$h \nu_3$};
\draw[fill=blue] (1.9,2.4) circle (0.05);
\draw[] (1.9,2.4) circle (0.4);

\draw[<->] (1.4,2.9)--(0.7,2.2);
\node[left] at (1.1,2.8) {$c h^2$};

\draw[<->] (-0.3,0.7)--(-0.3,1.3);
\node[left] at (-0.6,1) {$h^{\frac 5 2 - \kappa}$};

\draw plot[mark=x] coordinates{(0.3,0.9)};
\draw plot[mark=x] coordinates{(1.1,1.9)};
\draw plot[mark=x] coordinates{(2.1,2.5)};

\node[] at (4.2,-3) {$D(h\mu_0,Ch^2)$};

\draw plot[mark=x] coordinates{(5,2)};
\node[right] at (5.2,2) {$\lambda_j(h)$};
\end{tikzpicture}
\end{center}

\begin{remark}
  This theorem holds for $N=0$ as well, meaning that if $\Peff$ has no
  spectrum in $D(\mu_0,Ch)$ then $\Lh$ has no spectrum in
  $D(\mu_0 h , Ch^2)$.
\end{remark}

\subsection{Applications of the main theorem}\label{sec.corollaries}

Although the main incentive for Theorem \ref{Main-Thm} is to deal with
non-selfadjoint versions of the electromagnetic Schrödinger
operator, it turns out that Theorem \ref{Main-Thm} also recovers and
extends some recent results in the selfadjoint case.

\begin{corollary}[Self-adjoint case]
  Let Assumptions \ref{Assumption.B.positif}, \ref{Assumption.classeS}
  hold, and assume moreover that $\mathsf{Im}(V) = 0$, and
  $B + \mathsf{Re}(V)$ admits a unique minimum, which is
  non-degenerate, $\mu_0 \in \R$ reached at $0$ and not at
  infinity. Let $C >0$. There exists $h_0>0$ such that, for
  $h \in (0,h_0)$ the spectrum of $\Lh$ in $D(\mu_0 h, Ch^2)$ consists
  of a family of discrete eigenvalues
  $(\lambda_j(h))_{1 \leq j \leq N}$ of simple multiplicities such
  that
\[\lambda_j(h) = \mu_0 h + ((2j-1)c_0 + c_1)h^2 + o(h^2)\,,\]
where
$c_0 = B(0)^{-1} \vert \frac{1}{2}\nabla^2 ( B + V ) (0) \vert^{1/2}$
and $c_1 \in \R$.
\end{corollary}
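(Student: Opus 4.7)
The plan is to apply Theorem \ref{Main-Thm} to $\Peff$, so the main task reduces to (i) computing the low-lying eigenvalues of $\Peff$ in $D(\mu_0,C'h)$ via a harmonic approximation, and (ii) verifying the two hypotheses \textrm{(a)} and \textrm{(b)} in the present self-adjoint setting.

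First, check Assumption \ref{Assumption} with $u=1$, $v=0$: since $\Im V=0$, the function $F=B+V$ has a unique non-degenerate global minimum at some point (take it to be $0$), not reached at infinity, and $(B+V)(0)=\mu_0\in\R$. Next, analyze the principal symbol $p_0=\rond B+\rond V=(B+V)\circ\varphi^{-1}$ of $\Peff$: it inherits from $B+V$ a unique non-degenerate minimum at $(\xi_0,x_0):=\varphi(0)$ with value $\mu_0$. The diffeomorphism \eqref{eq.diffeo} gives
\[
d\varphi(0)=\begin{pmatrix} B(0) & 0 \\ 0 & 1 \end{pmatrix},\qquad \Hess p_0(\xi_0,x_0)=d\varphi(0)^{-\top}\Hess(B+V)(0)\,d\varphi(0)^{-1},
\]
hence $\det\Hess p_0(\xi_0,x_0)=B(0)^{-2}\det\Hess(B+V)(0)$.

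Then apply the standard semiclassical harmonic approximation to the $1$D $h$-pseudodifferential operator $\Peff$: after shifting the minimum to the origin, the principal symbol is $\mu_0+\tfrac12\langle\Hess p_0(\xi_0,x_0)\cdot y,y\rangle+\grandO(|y|^3)$, and $\Peff$ is microlocally modeled by a shifted harmonic oscillator $\mu_0+h\mu_1(\xi_0,x_0)+\Oph^w\bigl(\tfrac12\langle\Hess p_0\cdot y,y\rangle\bigr)$. Its eigenvalues are $(2j-1)hc_0$ with
\[
c_0=\tfrac12\sqrt{\det\Hess p_0(\xi_0,x_0)}=B(0)^{-1}\bigl|\tfrac12\nabla^2(B+V)(0)\bigr|^{1/2},
\]
yielding
\[
\nu_j(h)=\mu_0+\bigl((2j-1)c_0+c_1\bigr)h+\grandO(h^2),\qquad c_1:=\mu_1(\xi_0,x_0).
\]
When $V$ is real, $\Lh$ is self-adjoint, the reduction producing $\Peff$ can be arranged (possibly after a unitary conjugation) to be self-adjoint as well, so $c_1\in\R$ and the trivial self-adjoint resolvent bound gives hypothesis \textrm{(b)} with any $\kappa\in(0,\tfrac12)$. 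Hypothesis \textrm{(a)} follows from the expansion above, since the spacings satisfy $\nu_{j+1}(h)-\nu_j(h)=2c_0h+\grandO(h^2)\geq ch$ for $h$ small. Theorem \ref{Main-Thm} then yields
\[
\lambda_j(h)=h\nu_j(h)+\grandO(h^{5/2-\kappa})=\mu_0h+\bigl((2j-1)c_0+c_1\bigr)h^2+o(h^2),
\]
as $5/2-\kappa>2$.

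The main obstacle is ensuring that in the self-adjoint framework one can indeed treat $\Peff$ as a (quasi-)self-adjoint operator, so that the subprincipal correction $c_1$ is real and the resolvent bound \eqref{eq.Res-bound} is immediate; otherwise one would need to directly estimate $(z-\Peff)^{-1}$ for a genuinely non-normal model, which is precisely the issue the complex case forces us to confront. All remaining steps---the Hessian computation, the harmonic approximation, and the verification of simplicity of the eigenvalues of the harmonic oscillator model---are routine.
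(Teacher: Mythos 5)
Your proposal is correct and follows essentially the same approach as the paper's proof: verify hypothesis~(b) of Theorem~\ref{Main-Thm} from self-adjointness of $\Peff$ and the Spectral Theorem, verify hypothesis~(a) and compute the $\nu_j(h)$ via the standard semiclassical harmonic approximation for a one-dimensional symbol with a nondegenerate minimum, then deduce the constant $c_0$ from how the Hessian of $B+V$ transforms under the diffeomorphism $\varphi$. The only difference is that you spell out the Jacobian computation $d\varphi(0)=\mathrm{diag}(B(0),1)$ and the resulting relation $\det\Hess p_0=B(0)^{-2}\det\Hess(B+V)(0)$ explicitly, whereas the paper leaves it to the reader (``the computation of $c_0$ follows from the link between $\hat B+\hat V$ and $B+V$'') and refers to Sj\"ostrand for the harmonic approximation.
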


\begin{proof}
  When $\mathsf{Im}(V) = 0$, $\Peff$ and $\Lh$ are (essentially)
  selfadjoint. In particular, the resolvent bound
  \eqref{eq.Res-bound} follows from the Spectral Theorem. The
  assumptions on $B + V$ imply that the spectrum of $\Peff$ in
  $D(\mu_0, Ch)$ consists of discrete simple eigenvalues such that
\[\nu_j(h) = \mu_0 + \left((2j-1) \vert \frac{1}{2}\nabla^2 (\hat{B} + \hat{V})(0)\vert^{1/2} + c_1\right) h + o(h)\,.\]
(See \cite{Sj92} for instance). The computation of $c_0$ follows from
the link between $\hat{B} + \hat{V}$ and $B + V$.
\end{proof}
In the case $V=0$, the above result was proven
in~\cite{Birkhoff2D,HelKor14-2}; the general case seems to be new.

We now turn to the non-selfadjoint case, where a remarkable
consequence of Theorem \ref{Main-Thm} is the following result.
\begin{theorem}\label{thm.nsa} Let  $p = B + V$;
  together with Assumptions \ref{Assumption.B.positif},
  \ref{Assumption.classeS}, and \ref{Assumption}, assume that
  $p^{-1}(\mu_0) \cap \R^2 = \lbrace 0 \rbrace$, with
  $\nabla p (0) = 0$ and that $\nabla^2 p(0)$ is non-degenerate. For
  $C>0$, there exists $h_0 >0$ such that, for $h \in (0,h_0)$ the
  spectrum of $\Lh$ in $D(\mu_0h,Ch^2)$ consists of a family of
  discrete eigenvalues $(\lambda_j(h))_{1 \leq j \leq N}$ with simple
  algebraic multiplicities such that
  \[\lambda_j(h) = \mu_0 h + \left( (2j-1)c_0 + c_1 \right) h^2 +
    o(h^2)\,,\] where $c_0, c_1 \in \C$, $c_0 \neq 0$.
\end{theorem}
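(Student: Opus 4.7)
The plan is to reduce to Theorem \ref{Main-Thm} applied to $\Peff$. Let $\mathring{p} = \mathring{B} + \mathring{V} = p \circ \varphi^{-1}$, so that the hypotheses on $p$ transfer verbatim to $\mathring{p}$: the real diffeomorphism $\varphi$ maps the unique point $0 \in p^{-1}(\mu_0)$ to a unique point $z_0 = \varphi(0) \in \mathring{p}^{-1}(\mu_0)$, at which $\nabla \mathring{p}(z_0) = 0$ and $\nabla^2 \mathring{p}(z_0)$ is non-degenerate. One then has to verify the two hypotheses \eqref{thm.a} and \eqref{eq.Res-bound} for the $h$-pseudodifferential operator $\Peff = \Oph^w(\mathring{p} + h\mu_1)$, a task which is a purely one-dimensional microlocal problem for a complex symbol with an isolated non-degenerate critical point.

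For \eqref{thm.a}, I would apply a complex semiclassical Birkhoff normal form in the spirit of \cite{Sj92} (see also Hitrik--Sjöstrand for the non-selfadjoint setting). Outside any fixed neighborhood $U$ of $z_0$, the symbol $\mathring{p} - z$ stays bounded away from zero on $D(\mu_0, C'h)$, so semiclassical ellipticity shows that any eigenvalue of $\Peff$ in $D(\mu_0, C'h)$ must come from microlocalization in $U$. Near $z_0$, Taylor expansion gives $\mathring{p} = \mu_0 + q_2 + \grandO(|Z-z_0|^3)$, where $q_2$ is a non-degenerate complex quadratic form whose Hamilton map $J\nabla^2 \mathring{p}(z_0)$ has eigenvalues $\pm 2i c_0$ with
\[
c_0 := \tfrac{1}{2}\sqrt{\det \nabla^2 \mathring{p}(z_0)} \neq 0
\]
for a suitable branch of the square root. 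A complex symplectic change of coordinates reduces $q_2$ to $c_0(x^2 + \xi^2)$, and the Birkhoff procedure produces $\Psi_h$ microlocally unitary near $z_0$ such that $\Psi_h^{-1} \Peff \Psi_h = f(\Oph^w(x^2+\xi^2); h) + \grandO(h^\infty)$ with $f(E;h) = \mu_0 + c_0 E + c_1 h + \grandO(h E + E^2/h)$. The eigenvalues of the model harmonic oscillator are $(2j-1)h$, $j \geq 1$, hence
\[
\nu_j(h) = \mu_0 + (2j-1)c_0 h + c_1 h + o(h), \qquad 1 \leq j \leq N,
\]
with algebraically simple multiplicities and consecutive spacings $\geq |c_0| h$ for $h$ small enough, yielding \eqref{thm.a}.

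The main obstacle is the resolvent bound \eqref{eq.Res-bound}, which for non-selfadjoint operators does not follow from any spectral theorem. I would prove it by gluing two parametrices. Outside $U$, the symbol $\mathring{p} - z$ is elliptic uniformly for $z \in D(\mu_0, Ch)$, giving an $\grandO(1)$-bounded right inverse of $\Peff - z$ microlocalized away from $z_0$. Near $z_0$, the Birkhoff normal form conjugates $\Peff - z$ to $f(\Oph^w(x^2+\xi^2);h) - z$ modulo $\grandO(h^\infty)$; since this model is diagonal in the Hermite basis with simple eigenvalues in one-to-one correspondence with $\nu_j(h)$ and with spectral projectors of uniformly bounded rank-one norm, its inverse is controlled by $C/\dist(z, \spectrum \Peff)$. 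Assembling the two parametrices through a microlocal partition of unity yields a right inverse of $\Peff - z$ up to an $\grandO(h^\infty)$ operator-norm error, which is absorbed by $\dist(z, \spectrum \Peff) \geq h^{3/2-\kappa}$ for any $\kappa \in (0, \tfrac{1}{2})$. With both \eqref{thm.a} and \eqref{eq.Res-bound} verified, Theorem \ref{Main-Thm} gives $\lambda_j(h) = h\nu_j(h) + \grandO(h^{5/2-\kappa})$, and inserting the expansion of $\nu_j(h)$ above yields the claimed asymptotics with $c_0 \neq 0$ and $c_1 \in \C$.
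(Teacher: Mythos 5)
Your overall strategy --- verify hypotheses (a) and (b) of Theorem~\ref{Main-Thm} for $\Peff$ --- is the same as the paper's, and both the formula $c_0 = \tfrac{1}{2}\sqrt{\det\nabla^2\rond p(z_0)}$ and the ellipticity argument away from $z_0$ are sound. The central step, however, has a genuine gap. You invoke a non-selfadjoint Birkhoff normal form conjugating $\Peff$ near $z_0$ to $f(\Oph^w(x^2+\xi^2);h)$ modulo $\grandO(h^\infty)$ via a \emph{microlocally unitary} $\Psi_h$ --- but the remark immediately following Theorem~\ref{thm.nsa} in the paper explicitly rules this out: Hitrik's non-selfadjoint normal form \cite{Hitrik04} requires the symbol to extend analytically to a complex tubular neighbourhood of $\R^2$, whereas here one only has $\rond p\in S_{\R^2}(1)$. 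The obstruction already appears at quadratic order. After a \emph{real} linear symplectic change of variables the quadratic model becomes the Davies operator $\Op_1^w(\xi^2+e^{i\alpha}x^2)$ with $\alpha\in(0,\pi)$ in general, and the further reduction to $c_0\Op_1^w(x^2+\xi^2)$ requires the complex dilation $x\mapsto e^{i\alpha/4}x$, which is not a bounded operator on $L^2(\R)$. Indeed no bounded conjugation with bounded inverse can relate the two: $c_0\Op_1^w(x^2+\xi^2)$ is normal with spectral projections of norm $1$, while the Davies operator is non-normal, its rank-one spectral projections grow exponentially in $n$, and its pseudospectrum fills the sector between $\R_+$ and $e^{i\alpha}\R_+$ (Davies, Boulton, Pravda-Starov). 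Hence your claim that the local model ``is diagonal in the Hermite basis with spectral projectors of uniformly bounded rank-one norm'' is false for the actual model, and the gluing argument you propose for \eqref{eq.Res-bound} collapses.

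The paper (Section~\ref{sec.7.Hitrik}) avoids any such conjugation. It keeps the non-normal quadratic model $Q_0^w$: the spectrum $\{(2n-1)c_0 h,\ n\geq 1\}$ is found by exhibiting the complex-dilated Hermite functions $g_n(x)=f_n(e^{i\alpha/4}x)$ as explicit eigenfunctions (used only as eigenfunctions, never as a unitary conjugating basis), algebraic simplicity follows from Kato analytic perturbation in the parameter $\alpha$, and the resolvent estimate \eqref{eq.resQ0} is established only on the fixed bounded disc $D(0,C)$, from meromorphy and the finiteness of the spectrum there --- a constant that may grow with $C$, consistent with pseudospectral blowup at infinity. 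Properties (a) and (b) are then transferred from $Q_0^w$ to $\Peff$ by microlocalizing eigenfunctions at scale $h^\delta$, a Neumann series, and a comparison of Riesz projections. This yields exactly the first-order expansion $\nu_j(h)=\mu_0+((2j-1)c_0+c_1)h+o(h)$ that Theorem~\ref{thm.nsa} requires; the stronger $\grandO(h^\infty)$ expansion you were aiming for would indeed require analyticity of the symbol.
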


\begin{proof}
  To apply Theorem \ref{Main-Thm}, we need to check that $\Peff$ has
  the expected spectral properties in $D(\mu_0, Ch)$. They are
  established in Section \ref{sec.7.Hitrik}.
\end{proof}
\begin{remark}
  Operators like $\Peff$ have been studied in \cite{Hitrik04}, where a
  full asymptotic expansion was provided thanks to a Birkhoff normal
  form in a non-selfadjoint context. The method used there requires
  that the symbol be analytic in a tubular neighborhood of $\R^2$. Our
  strategy to reduce the spectral analysis to the one of $\Peff$ could
  actually give us an effective operator modulo
  $\mathscr{O}(h^\infty)$. Combined with Hitrik's result, one would
  get a full asymptotic expansion of the eigenvalues in $D(\mu_0,Ch)$.
\end{remark}

Theorem~\ref{thm.nsa} can also be applied to two other quite different
interesting situations, where the confinement is given either by the
imaginary part of $V$, or by the magnetic field $B$ alone.

\begin{corollary}[Constant real part]
  Let assumptions \ref{Assumption.B.positif}, \ref{Assumption.classeS}
  hold, and assume that $B + \mathsf{Re} (V)$ is constant on $\R^2$
  equal to $\mu_0$. Assume that $\mathsf{Im}(V)$ admits a unique
  minimum, which is non-degenerate, reached at $0$ and not at
  infinity, with $\mathsf{Im} V(0) = 0$. For $C>0$, there exists
  $h_0 >0$ such that, for $h \in (0,h_0)$ the spectrum of $\Lh$ in
  $D(\mu_0h, Ch^2)$ consists of a family of discrete eigenvalues
  $(\lambda_j(h))_{1 \leq j \leq N}$ with simple algebraic
  multiplicities such that
$$\lambda_j(h) =  \mu_0 h + ((2j-1) i c_0 + c_1)h^2 + o(h^2),$$
where $c_0 >0$ and $c_1 \in \C$.
\end{corollary}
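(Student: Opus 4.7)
The plan is to reduce the statement to Theorem~\ref{thm.nsa} and then to identify the constant produced by that theorem in closed form, so as to see that it takes the special shape $ic_0$ with $c_0>0$ announced in the corollary.

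First, I would verify Assumption~\ref{Assumption} via Example~2 of the introduction: with $u=1$ and $v=1$, the function
\[F = (B+\mathsf{Re}(V)) + \mathsf{Im}(V) = \mu_0 + \mathsf{Im}\,V\]
inherits from $\mathsf{Im}\,V$ a unique non-degenerate global minimum at $0$, not reached at infinity; at this minimum $B+V$ equals $\mu_0+i\cdot 0 = \mu_0$, consistently with the notation of the corollary.

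Next I would check the hypotheses of Theorem~\ref{thm.nsa} for $p=B+V$. Since $B+\mathsf{Re}(V)\equiv\mu_0$ one has $p(q)=\mu_0+i\,\mathsf{Im}\,V(q)$, hence
\[ p^{-1}(\mu_0)\cap\R^2 = \{q : \mathsf{Im}\,V(q)=0\} = \{0\},\]
$\nabla p(0) = i\,\nabla\mathsf{Im}\,V(0) = 0$ because $0$ is a minimum of $\mathsf{Im}\,V$, and $\nabla^2 p(0) = i\,\nabla^2\mathsf{Im}\,V(0)$ is $i$ times a real positive definite matrix, hence non-degenerate. Theorem~\ref{thm.nsa} then directly yields
\[ \lambda_j(h) = \mu_0 h + \bigl((2j-1)\,\alpha_0 + c_1\bigr)h^2 + o(h^2),\]
with some $\alpha_0, c_1\in\C$ and $\alpha_0\neq 0$.

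The remaining, and main, task is to show $\alpha_0 = ic_0$ with $c_0>0$. I would trace the construction of $\alpha_0$ through the proof of Theorem~\ref{thm.nsa} (Section~\ref{sec.7.Hitrik}): it is the fundamental eigenvalue gap of the Weyl quantization on $\Ld(\R)$ of the quadratic part at $\varphi(0)$ of the principal symbol $\rond{B}+\rond{V}$ of $\Peff$. In our setting $\rond{B}+\rond{V} = \mu_0 + i\,(\mathsf{Im}\,V)\circ\varphi^{-1}$, so the Hessian at $\varphi(0)=(0,0)$ equals $iM$ with
\[ M = (d\varphi(0))^{-\top}\,\nabla^2\mathsf{Im}\,V(0)\,(d\varphi(0))^{-1}\]
real and positive definite. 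Hence the relevant Weyl quantization of $\tfrac{i}{2}\langle M(x,\xi),(x,\xi)\rangle$ is $i$ times an elliptic real harmonic oscillator, whose spectrum is $\{(2j-1)c_0 : j\in\N^*\}$ with $c_0=\tfrac{1}{2}\sqrt{\det M}>0$. Multiplying by $i$ produces $\alpha_0 = ic_0$ and yields the announced expansion.

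The main obstacle I foresee is precisely this last identification: one must verify that the constant extracted from the proof of Theorem~\ref{thm.nsa} (via the quadratic Birkhoff-type reduction in the spirit of~\cite{Hitrik04}) is really the Weyl eigenvalue described above, so that the positivity of the Hessian of $\mathsf{Im}\,V$ transfers unambiguously to $c_0>0$ without any sign choice. Once this is in place, the corollary is a direct specialization of Theorem~\ref{thm.nsa}.
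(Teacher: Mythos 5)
Your proposal is correct and follows the same route as the paper: verify Assumption~\ref{Assumption} with $u=v=1$, check the hypotheses of Theorem~\ref{thm.nsa} for $p=\mu_0+i\,\mathsf{Im}\,V$, and identify the coefficient via the fact that $\nabla^2 p(0)=i\,\nabla^2\mathsf{Im}\,V(0)$ is $i$ times a real positive definite matrix, so that the quadratic model $Q_0$ of Section~\ref{sec.7.Hitrik} is $i$ times a real elliptic harmonic oscillator. The paper's proof is terser (it stops after observing the form of the Hessian and leaves the conclusion $c_0\in i\R_{>0}$ implicit), whereas you spell out why the fundamental spectral gap of $Q_0^w$ lands on the positive imaginary axis; that extra step is exactly the right way to close the argument, and no Birkhoff-normal-form subtlety is actually needed since the paper compares $\Peff$ directly with $Q_0^w$.
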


\begin{proof}
  In this case, Assumption \ref{Assumption} is valid with $u=v=1$. We
  can apply Theorem~\ref{thm.nsa}. Indeed, we have
  $p = \mu_0 + i \mathsf{Im}(V)$ so that
  $p(\R^2) \subset \mu_0 + i \R_+$ and $0$ is the only point where
  $p = \mu_0$. The Hessian of $p$ is $i \nabla^2 \mathsf{Im} V (0)$,
  which is non degenerate.
\end{proof}

\medskip

Finally, we can also describe the spectrum of the following
non-selfadjoint perturbation of the magnetic Laplacian:
\[ \mathscr{L}_{h,\varepsilon} = (-ih \nabla - A)^2 + h \varepsilon V\,,\]
where the magnetic field $B$ admits a unique minimum $b_0 >0$ which is non-degenerate, reached at 0 and not at infinity. In this case, $B + \varepsilon V$ admits a unique critical point $z_{\varepsilon}$ such that $z_{\varepsilon} = \grandO(\varepsilon)$, and we denote the critical value by $\mu_{\varepsilon} = B(z_\varepsilon) + \varepsilon V(z_\varepsilon)$.

\begin{corollary}[Perturbation of a confining magnetic
  field]\label{coro.perturbation}
  Let Assumptions \ref{Assumption.B.positif}, \ref{Assumption.classeS}
  hold, and assume that $B$ has a unique minimum, which is
  non-degenerate, $b_0$, reached at $0$ and not at infinity. Let
  $C >0$. There exist $\varepsilon_0, h_0 >0$ such that for
  $\varepsilon \in (0, \varepsilon_0)$ and $h \in (0, h_0)$
  the spectrum of $\mathscr{L}_{h,\varepsilon}$ in
  $D(\mu_{\varepsilon} h, C h^2)$ consists of a family of discrete
  eigenvalues $(\lambda_j(h,\varepsilon))_{1 \leq j \leq N}$ with
  algebraic multiplicities 1 such that
$$\lambda_j(h,\varepsilon) = \mu_{\varepsilon}h + \left((2j-1)c_0(\varepsilon) + c_{1}(\varepsilon) \right)h^2 + o(h^2),$$
where $c_0(\varepsilon)$, $c_1(\varepsilon) \in \C$ satisfy
$$c_0(\varepsilon) = b_0^{-1} \left\vert \frac{1}{2} \nabla^2 B (0) \right\vert^{1/2} + \grandO(\varepsilon).$$
\end{corollary}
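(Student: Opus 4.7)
The plan is to deduce the corollary from Theorem~\ref{thm.nsa} applied to $\mathscr{L}_{h,\varepsilon}$, treating $\varepsilon V$ as the complex potential of the general framework. For each fixed small $\varepsilon$, the $o(h^2)$ statement will follow from Theorem~\ref{thm.nsa} as $h\to 0$, while the $\varepsilon$-expansion of $c_0(\varepsilon)$ will come from perturbation of the effective symbol at its critical point around $\varepsilon=0$.

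First I verify Assumptions~\ref{Assumption.B.positif}, \ref{Assumption.classeS}, and \ref{Assumption}: the first two are inherited from $B$ and $V$. For Assumption~\ref{Assumption}, take $(u,v)=(1,0)$, so that $F_\varepsilon = B + \varepsilon\,\Re V$. Using that $B$ has a unique non-degenerate minimum $b_0$ at $0$ not at infinity, there exist $r,\delta>0$ with $B\geq b_0+\delta$ outside $\{|q|\leq r\}$ and $\nabla^2 B$ uniformly positive definite near $0$; the implicit function theorem together with $\|\Re V\|_\infty<\infty$ then produces, for $\varepsilon$ small enough, a unique non-degenerate global minimum $z_\varepsilon^F=\grandO(\varepsilon)$ of $F_\varepsilon$, not attained at infinity.

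Next I verify the remaining hypotheses of Theorem~\ref{thm.nsa} for $p_\varepsilon = B+\varepsilon V$. The implicit function theorem applied to $\nabla p_\varepsilon(z)=0$ around the non-degenerate solution $z=0$ at $\varepsilon=0$ produces a unique critical point $z_\varepsilon=\grandO(\varepsilon)$ of $p_\varepsilon$ with non-degenerate Hessian $\nabla^2 p_\varepsilon(z_\varepsilon)=\nabla^2 B(0)+\grandO(\varepsilon)$. The preimage condition $p_\varepsilon^{-1}(\mu_\varepsilon)\cap\R^2=\{z_\varepsilon\}$ follows from the real-part equation $B(q)+\varepsilon\,\Re V(q)=\Re\mu_\varepsilon=F_\varepsilon(z_\varepsilon^F)$ combined with uniqueness of the minimum of $F_\varepsilon$, which also identifies $z_\varepsilon=z_\varepsilon^F$.

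Theorem~\ref{thm.nsa} then delivers $\lambda_j(h,\varepsilon) = \mu_\varepsilon h + ((2j-1)c_0(\varepsilon)+c_1(\varepsilon))h^2 + o(h^2)$. The explicit form of $c_0(\varepsilon)$ produced by the proof of Theorem~\ref{thm.nsa} (via the analysis of the effective operator $\Peff$ at its critical point in Section~\ref{sec.7.Hitrik}) is, up to an $\grandO(\varepsilon)$ subprincipal contribution coming from $\mu_1$, a quantity of the form $B(z_\varepsilon)^{-1}\bigl|\tfrac12\nabla^2(B+\varepsilon V)(z_\varepsilon)\bigr|^{1/2}$; using $\nabla B(0)=0$ and $z_\varepsilon=\grandO(\varepsilon)$ one finds $B(z_\varepsilon)=b_0+\grandO(\varepsilon^2)$ and $\nabla^2(B+\varepsilon V)(z_\varepsilon)=\nabla^2 B(0)+\grandO(\varepsilon)$, whence $c_0(\varepsilon) = b_0^{-1}\bigl|\tfrac12\nabla^2 B(0)\bigr|^{1/2} + \grandO(\varepsilon)$. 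The main obstacle is ensuring that all perturbation estimates (uniqueness of the minimum, non-degeneracy of the Hessian, and the resolvent bound needed by Theorem~\ref{thm.nsa}) admit a common $\varepsilon_0$, so that the theorem can legitimately be invoked for every $\varepsilon\in(0,\varepsilon_0)$ rather than only for each fixed $\varepsilon$ in isolation.
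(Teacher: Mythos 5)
Your proposal follows exactly the same route as the paper: deduce the corollary by applying Theorem~\ref{thm.nsa} with $p=B+\varepsilon V$, using perturbation of the minimum of $B$ to verify the hypotheses. You are more explicit than the paper about the implicit-function-theorem step, the preimage condition, and the $\grandO(\varepsilon)$ expansion of $c_0(\varepsilon)$, which is all fine.

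One point is worth refining, however. In your last paragraph you identify the remaining work as finding a common $\varepsilon_0$ for the perturbation estimates. That is a minor bookkeeping issue. The paper's proof emphasizes a different and more substantial uniformity concern: having applied Theorem~\ref{thm.nsa} to $p_\varepsilon=B+\varepsilon V$, one a priori obtains a threshold $h_0(\varepsilon)$ that could degenerate as $\varepsilon$ varies, whereas the statement demands a single $h_0$ valid for all $\varepsilon\in(0,\varepsilon_0)$. The paper resolves this by noting that the constants entering Theorem~\ref{Main-Thm} — in particular the eigenvalue gap constant $c$ in hypothesis~(a) and the resolvent constant $C_0$ in hypothesis~(b), both feeding into the choice of $h_0$ throughout Sections~\ref{sec.4.Parametrix}--\ref{sec.6.remove.chi} — can be chosen uniformly in $\varepsilon$, since $p_\varepsilon$ is a smooth perturbation of $B$ in $S_{\R^2}(1)$. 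Your proof would need this additional remark to fully justify the stated $(\varepsilon_0,h_0)$ uniformity; the rest matches the paper's argument.
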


\begin{proof}
  For $\varepsilon >0$ small enough, $B + \varepsilon V$ admits a
  unique and non-degenerate real critical point $z_{\varepsilon}$
  close to $0$, since it is a perturbation of $B$. Then
  $p = B + \varepsilon V$ satisfies the hypotheses of
  Theorem~\ref{thm.nsa}, as a perturbation of $B$, and Corollary
  \ref{coro.perturbation} is just a reformulation of this result. Note that $h_0>0$ is independent of $\varepsilon$ because, as one can check in our proof, the constants involved in our estimates can be chosen uniform in $\varepsilon$ (in particular the distance $ch$ between the eigenvalues in Theorem \ref{Main-Thm} \eqref{thm.a}).
\end{proof}

\begin{remark}
  Our results are adapted to the `ground state' of $\mathscr{L}_{h}$,
  \emph{i.e.} eigenfunctions associated with eigenvalues whose
  location is dictated by the minimum of the function $F$. It would be
  interesting to try and adapt the method to treat excited states, for
  which we should be able to leverage Rouby's results about 1D
  non-selfadjoint pseudodifferential operators~\cite{rouby-17}.
\end{remark}

\subsection{Structure of the article}

The article is organized as follows. In Section \ref{sec.2}, using a phase space change of variables, we prove that $\mathscr{L}_h$
is unitarily equivalent to an operator $h \Loh$ that can be seen as a
perturbation of a harmonic oscillator. As a consequence, we obtain a
microlocalization of eigenfunctions at distance $\grandO(h^\delta)$,
$\delta\in\interval[open right]0{\tfrac{1}{2}}$, of the characteristic
manifold. In Section \ref{sec.3.Lh.hat}, we introduce a slight
modification of $\Loh$ by inserting microlocal cutoff functions in the
symbol. In Section~\ref{subsec.3.2} we see $\widehat{\mathscr{L}}_h$
as a pseudodifferential operator with operator-valued symbol, and we
expand its symbol in powers of $h^{1/2}$. The properties of its
principal symbol $\mathsf{P}_0$, which is essentially a harmonic
oscillator, are described in Section \ref{subsec.3.3}. In Section
\ref{sec.4.Parametrix}, we use a Grushin method to reduce the
spectral analysis to the one of the effective operator $\Peff$, and in
Section \ref{sec.5.Reduction} we prove that the spectrum of
$\widehat{\mathscr{L}}_h$ is approximated by the spectrum of
$\Peff$. In Section \ref{sec.6.remove.chi} we `remove the cutoff
functions', proving that the spectrum of $\Loh$ is close to the
spectrum of $\widehat{\mathscr{L}}_h$, thus concluding the proof of
Theorem \ref{Main-Thm}. Finally, in Section \ref{sec.7.Hitrik}, we
prove Theorem \ref{thm.nsa} by explaining how to describe the spectrum
and the resolvent of $\Peff$.

\section{A first conjugation}\label{sec.2}

Using the semiclassical Weyl quantization, we may view $\mathscr{L}_h$
as an $h$-pseudo-differential operator:
\[
  \mathscr{L}_h=\Op^w_h(H)\,,\quad H(q,p)= (p_1-A_1(q_1,q_2))^2 + (p_2
  - A_2(q_1,q_2))^2 + h V(q_1,q_2)\,.
\]
Microlocal analysis suggests that eigenfunctions of $\mathscr{L}_h$
for eigenvalues of order $\mathscr{O}(h)$ should be localized near the
characteristic manifold $H^{-1}(0)$.  The aim of this section is to
introduce new phase space coordinates $(x_1,x_2,\xi_1,\xi_2)$ for
which this characteristic manifold becomes linear, and such that the
coordinates $(x_1,\xi_1)$ represent the distance to the characteristic
manifold. Scaling these coordinates by the natural factor $\sqrt h$
will finally yield Proposition~\ref{prop.Lh0.hat} below. A similar
conjugation (without the scaling) was performed in~\cite{GBRVN21}.

Due to the gauge invariance, we may assume that $\mathbf{A}$ has the
form
\[\mathbf{A}=(0,A_2)\,,\quad
  A_2(q)=\int_0^{q_1}B(s,q_2)\mathrm{d}s\,.\]

The diffeomorphism $\varphi : \R^2\to\R^2$ defined
in~\eqref{eq.diffeo} now reads
\[ \varphi(q)= (A_2(q),\, q_2)\,.\] For any function $f:\R^2\to \C$,
we shall denote $\rond f=f\circ \varphi^{-1}$.
\begin{proposition}\label{prop.Lh0.hat}
  The operator $\mathscr{L}_h$ is unitarily equivalent to
  \[
    h \Loh := h \Op^{w,2}_{h} \Op_1^{w,1}
    \widehat{H}^0,
  \]
  where
  \begin{equation}\label{eq.H0.hat}
    \begin{split}
      \widehat{H}^0 (x,\xi) =&\rond{B}(\xi_2 + h^{1/2} x_1, x_2 + h^{1/2} \xi_1)^2 \xi_1^2 + \left( x_1 + \rond{\alpha}(\xi_2 + h^{1/2}x_1,x_2 + h^{1/2}\xi_1) \xi_1 \right)^2\\
      &+ \rond{V}(\xi_2 + h^{1/2}x_1, x_2 + h^{1/2}\xi_1) + h W(\xi_2 +
      h^{1/2}x_1, x_2 + h^{1/2}\xi_1)\,,
    \end{split}
  \end{equation}
  and
  \begin{enumerate}[\rm (i)]
  \item $\alpha(q_1,q_2) = \partial_2 A_2 (q_1,q_2)$,
  \item
    $W = \frac{1}{4} (\partial_1 \rond{B})^2 + \frac{1}{4} (\partial_1
    \rond \alpha)^2 \,.$
  \end{enumerate}
\end{proposition}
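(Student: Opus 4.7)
The plan is to construct an explicit unitary $U = U_2 U_1$ on $L^2(\R^2)$ realizing $U^*\mathscr{L}_h U = h\Loh$. The factor $U_1$ is a Fourier integral operator adapted to the magnetic coordinates $\varphi(q) = (A_2(q), q_2)$, and $U_2$ combines a partial semiclassical Fourier transform with an anisotropic rescaling by $h^{1/2}$ in the direction transverse to the characteristic manifold.

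For $U_1$, I take the weighted pull-back $(U_1 v)(q) = B(q)^{1/2}\, v(\varphi(q))$, which is unitary because $\det D\varphi = B$. A direct chain-rule computation gives the \emph{exact} intertwining relations
\begin{equation*}
U_1^*(-ih\partial_{q_1})U_1 = \Op^w_h(\rond B\, \tilde p_1), \qquad U_1^*(-ih\partial_{q_2}-A_2)U_1 = \Op^w_h(\tilde p_2 - \tilde q_1 + \rond\alpha\,\tilde p_1),
\end{equation*}
the second resting on the identity $\partial_1 \rond\alpha = \bigl((\partial_1\rond B)\rond\alpha + \partial_2\rond B\bigr)/\rond B$, obtained by differentiating $\varphi\circ\varphi^{-1} = \mathrm{id}$ and using $\alpha = \partial_{q_2}A_2$. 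Squaring and applying the Moyal identity $(f\tilde p_1)\sharp(f\tilde p_1) = (f\tilde p_1)^2 + \tfrac{h^2}{4}(\partial_1 f)^2$, valid for symbols linear in $\tilde p_1$, with $f = \rond B$ and then with $f = \rond\alpha$ inside the second square, I obtain
\begin{equation*}
U_1^*\mathscr L_h U_1 = \Op^w_h\!\bigl(\rond B^2 \tilde p_1^2 + (\tilde p_2 - \tilde q_1 + \rond\alpha\,\tilde p_1)^2 + h\rond V + h^2 W\bigr),
\end{equation*}
with $W = \tfrac14(\partial_1\rond B)^2 + \tfrac14(\partial_1\rond\alpha)^2$. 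The cross-terms $(\tilde p_2 - \tilde q_1)\sharp(\rond\alpha\,\tilde p_1)$ contribute no $h^2$ correction because $\tilde p_2 - \tilde q_1$ is affine (so its second derivatives vanish) and the antisymmetry of the Moyal bracket kills the $h^1$ term in the symmetric sum.

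For $U_2$, I compose a partial semiclassical Fourier transform in $\tilde q_2$, an affine symplectic shift aligning the characteristic manifold $\{\tilde p_1 = 0,\ \tilde p_2 - \tilde q_1 = 0\}$ with a coordinate plane, and the dilation $u\mapsto h^{1/4} u(\cdot/h^{1/2},\cdot)$ in the transverse direction. The dilation converts the scale-$h$ Weyl calculus in one variable into the unit-parameter calculus $\Op^{w,1}_1$, while $\Op^{w,2}_h$ survives in the tangential direction, producing the two-parameter operator-valued quantization $\Op^{w,2}_h\Op^{w,1}_1$. On the symbolic side, $U_2$ realizes the substitutions $\tilde q_1 \mapsto \xi_2 + h^{1/2} x_1$, $\tilde q_2 \mapsto x_2 + h^{1/2}\xi_1$, $\tilde p_1 \mapsto h^{1/2}\xi_1$, and $\tilde p_2 - \tilde q_1 \mapsto h^{1/2}x_1$. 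Inserting these into $\tilde H + h^2 W$ and pulling out the overall factor $h$ yields precisely the symbol $\widehat H^0$ of the statement, completing the identification $U^*\mathscr{L}_h U = h\Loh$.

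The main obstacle I foresee is Step 1, specifically the clean identification of the subprincipal correction $hW$. Everything hinges on the Moyal-product identity for symbols linear in $\tilde p_1$, on the geometric identity linking $\rond\alpha$ and $\rond B$, and on the vanishing of all cross-contributions thanks to the affine nature of $\tilde p_2 - \tilde q_1$; the exactness of the Egorov intertwining for this particular FIO is what prevents any spurious $h$-remainder from appearing. Step 2 is an assembly of standard unitaries, but must be executed carefully to expose the precise substitutions above and the emergence of the two-parameter quantization.
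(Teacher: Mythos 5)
Your proposal is correct and follows essentially the same two-step strategy as the paper: conjugate by the weighted pull-back adapted to the magnetic coordinates to get the exact Weyl symbol with its $O(h^2)$ correction, then apply a linear symplectic map followed by the anisotropic $h^{1/2}$-dilation in $X_1$ to produce the two-parameter quantization $\Op^{w,2}_h\Op^{w,1}_1$. Your symbolic route through exact intertwining relations and the Moyal identity $(f\tilde p_1)\sharp(f\tilde p_1)=(f\tilde p_1)^2+\tfrac{h^2}{4}(\partial_1 f)^2$ is a clean reformulation of the paper's operator manipulations; the one small imprecision is the mention of a partial Fourier transform in $\tilde q_2$ as a factor of $U_2$, whereas the linear change used (the shear $\tilde x_1 = x_1-\xi_2$, $\tilde x_2 = x_2-\xi_1$) is realized by a Fourier multiplier via the metaplectic representation and needs no variable swap — but this does not affect the argument.
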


\begin{remark}
  Here $\Op^{w,2}_h$ is the $h$-Weyl quantization with respect to $(x_2,\xi_2)$, and $\Op^{w,1}_1$ is the non-semiclassical Weyl quantization with respect to $(x_1,\xi_1)$, which means
  \begin{multline}
  \Op^{w,2}_{h} \Op_1^{w,1} u(x_1,x_2) =  \\ \frac{1}{(2\pi)^2
    h}\int_{\R^4} e^{\frac{i}{h}(x_2-y_2)\xi_2+i(x_1-y_1)\xi_1}
  \widehat{H}^0(\frac{x_1+y_1}2, \frac{x_2+\xi_2}{2}) u(y_1,y_2)\mathrm{d} y_1
  \mathrm{d} y_2 \mathrm{d} \xi_1 \mathrm{d} \xi_2.
\end{multline}
  
\end{remark}

\subsection{Proof of Proposition \ref{prop.Lh0.hat} } 
We split the proof into two steps : Lemma \ref{lemma.tildeH} and Lemma \ref{lemma.Lh0}.

\begin{lemma}\label{lemma.tildeH}
  The operator $\mathscr{L}_h$ is unitarily equivalent to
  $\widetilde{\mathscr{L}}_h=\Op^w_h \tilde H$, where
\begin{equation}\label{equ:H-tilde}
\begin{split}
\tilde H (x,\xi) &= \rond{B}(\xi_2 + x_1, x_2 + \xi_1)^2 \xi_1^2 + \left( x_1 + \rond{\alpha}(\xi_2 + x_1,x_2 + \xi_1) \xi_1 \right)^2
+ h \rond{V}(\xi_2 + x_1, x_2 + \xi_1) \\ &+ \frac{h^2}{4} (\partial_1 \rond B (\xi_2 + x_1, x_2 + \xi_1))^2 + \frac{h^2}{4}(\partial_1 \rond \alpha (\xi_2 + x_1, x_2 + \xi_1))^2 \,.
\end{split}
\end{equation}
\end{lemma}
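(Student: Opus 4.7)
The plan is to perform two successive unitary conjugations. In the gauge $\mathbf{A} = (0, A_2)$ with $A_2(q) = \int_0^{q_1} B(s, q_2)\,\dd s$, the operator reads $\mathscr{L}_h = (-ih\partial_{q_1})^2 + (-ih\partial_{q_2} - A_2(q))^2 + h V(q)$. My aim is first to straighten the magnetic flux coordinate using $\varphi$, so that $A_2(q)$ becomes a new coordinate $Q_1$, and then to apply a metaplectic transformation producing the characteristic arguments $\xi_2 + x_1$ and $x_2 + \xi_1$ that appear in $\tilde H$.

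For the first conjugation, I would introduce the unitary $U\colon L^2(\R^2_q)\to L^2(\R^2_Q)$ defined by $(U\phi)(Q) = \rond{B}(Q)^{-1/2}\phi(\varphi^{-1}(Q))$, which is unitary because $\det D\varphi = B$. Using the chain rule $\partial_{q_1} = B\,\partial_{Q_1}$ and $\partial_{q_2} = \alpha\,\partial_{Q_1} + \partial_{Q_2}$, together with careful tracking of the $B^{1/2}$-Jacobian, a direct computation gives $U(-ih\partial_{q_1})U^{-1} = \Op^w_h(\rond{B}(Q)\,P_1)$ and $U(-ih\partial_{q_2} - A_2)U^{-1} = \Op^w_h(\rond{\alpha}(Q)\,P_1 + P_2 - Q_1)$, with no $h$-correction in either symbol. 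The crucial cancellation for the second symbol relies on the compatibility $\partial_{q_2} B = \partial_{q_1}\alpha$, which follows from $B = \partial_{q_1}A_2$ and $\alpha = \partial_{q_2}A_2$.

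Next I would square both symbols using the Moyal $\#$-product. Both symbols $a$ are affine in $P$, hence $\partial_P^{\beta} a = 0$ for $|\beta|\geq 2$; consequently $a\#a$ terminates at order $h^2$, and since $\{a,a\}=0$ only the $h^2$-contribution survives. A short bilinear computation yields $a\#a = a^2 + \tfrac{h^2}{4}(\partial_1 f)^2$, where $f$ is the coefficient of $P_1$ in $a$, which produces exactly the $\tfrac{h^2}{4}\bigl((\partial_1\rond{B})^2 + (\partial_1\rond{\alpha})^2\bigr)$ correction in $\tilde H$. Adding the multiplicative term $h\rond{V}(Q)$ from the electric potential gives the Weyl symbol of $U\mathscr{L}_h U^{-1}$ in $(Q,P)$-coordinates. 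A final conjugation by the metaplectic operator associated to a linear symplectomorphism $\ell\colon(x,\xi)\mapsto(Q,P)$ with $Q_1 = \xi_2 + x_1$ and $Q_2 = x_2 + \xi_1$ then produces $\tilde H$: for a linear symplectomorphism the conjugation acts on Weyl symbols by the exact pull-back $f\mapsto f\circ\ell$, with no subprincipal correction.

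The main obstacle is the cancellation at the first step: the clean Weyl symbol for $U(-ih\partial_{q_2} - A_2)U^{-1}$ requires the subprincipal contribution $\tfrac{ih}{2}(\partial_{q_2}B)\circ\varphi^{-1}/\rond{B}$ arising from the Jacobian factor to match exactly the symmetrization correction $\tfrac{ih}{2}\partial_1\rond\alpha$ picked up when rewriting $\rond\alpha\,P_1$ in Weyl form. This works thanks to the identity $(\partial_{q_2}B)\circ\varphi^{-1} = \rond{B}\,\partial_1\rond\alpha$ (a direct consequence of $B(q) = \rond B(\varphi(q))$ and $\partial_{q_2}B = \partial_{q_1}\alpha$). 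Once this cancellation is established, the Moyal squaring is a short calculation and the metaplectic step is automatic.
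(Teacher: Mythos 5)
Your proof is correct and takes essentially the same route as the paper: conjugation by the unitary $U$ built from $\varphi$ and the Jacobian factor $\rond{B}^{-1/2}$, the key cancellation $\partial_{q_2}B = \partial_{q_1}\alpha$ giving the clean affine-in-$\xi$ Weyl symbols $\rond{B}\,\xi_1$ and $\rond{\alpha}\,\xi_1+\xi_2-x_1$, Moyal squaring producing the $\tfrac{h^2}{4}$ corrections, and the final linear metaplectic conjugation; the only difference is presentational (you stay at the symbol level throughout, whereas the paper first exhibits the conjugated operator in symmetrized form $\rond{B}^{1/2}hD_1\rond{B}^{1/2}$, $\tfrac12(\rond{\alpha}D_1+D_1\rond{\alpha})+D_2-x_1$ before reading off its Weyl symbol). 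As a side remark, your coefficient $\tfrac{h^2}{4}(\partial_1\rond{\alpha})^2$ agrees with the lemma statement and with $W$ in Proposition~\ref{prop.Lh0.hat}; the intermediate formula $\check H$ in the paper's own proof displays $\tfrac{h^2}{2}(\partial_{x_1}\rond{\alpha})^2$, which is a typographical slip.
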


\begin{proof}
First, let us rewrite the operator in the variables $(x_1,x_2) = \varphi(q_1,q_2)$. For any $u \in \Cinf(\R^d)$ recall that we denote $\rond u = u \circ \varphi^{-1}$, so that $\rond u (x) = u(q)$. Then we have :
\[ \begin{cases}
\partial_{q_1} u = \rond B \partial_{x_1} \rond u,\\
\partial_{q_2} u = \rond \alpha \partial_{x_1} \rond u  + \partial_{x_2} \rond u \,,
\end{cases}
\] 
with $\alpha(q) = \partial_2 A_2(q)$. Then $\mathscr{L}_h$ is given in these variables by
\[
  \mathscr{L}_h u = -h^2 (\rond B \partial_{x_1})^2 \rond{u} + \left(
    -ih \rond \alpha \partial_{x_1} - i h \partial_{x_2} - x_1
  \right)^2 \rond u + h \rond V \rond u \,,
\]
because $x_1 = A_2(q)$. In
other words, if $U$ is the following unitary transformation:
\[
  U : \left\{ \begin{matrix}
      \Ld(\R^2) &\rightarrow &\Ld(\R^2)\\
      u &\mapsto & \vert \dd \varphi^{-1} \vert^{1/2} u \circ
      \varphi^{-1}
\end{matrix}  \right.
\]
then, since the Jacobian $\vert \dd \varphi \vert$ equals $B$,
\[
  U \mathscr{L}_h U^* \rond u = -h^2 \rond B ^{1/2} \partial_{x_1} \rond
  B \partial_{x_1} (\rond{B}^{1/2} \rond{u}) + \rond B ^{-1/2} \left( -ih
    \rond \alpha \partial_{x_1} - i h \partial_{x_2} - x_1 \right)^2
  (\rond B ^{1/2} \rond u ) + h \rond V \rond u \,.
\]
With the notation $hD_j = -ih \partial_{x_j}$ we can rewrite it as
\begin{equation}\label{eq.ULU}
 U \mathscr{L}_h U^*  = \left(\rond B^{1/2} h D_1 \rond B^{1/2} \right)^2 + \left( \frac{1}{2}(\rond \alpha D_1 + D_1 \rond \alpha) + D_2 - x_1 \right)^2 + h \rond V \,.
 \end{equation}
Indeed, this follows from
\[ B^{-1/2} ( \rond \alpha D_1 + D_2 - x_1) B^{1/2} = \frac{1}{2}(\rond \alpha D_1 + D_1 \rond \alpha) + D_2 - x_1  \,, \]
which one can get using $(D_1 \rond \alpha) = \rond B^{-1} ( \rond \alpha D_1 \rond B + D_2 \rond B)$. Now, the Weyl-symbol of \eqref{eq.ULU} is
\begin{equation*}
\check{H}(x,\xi) = \rond B(x)^2 \xi_1^2 + (\rond \alpha \xi_1 + \xi_2 - x_1)^2 + h \rond V + \frac{h^2}{4} (\partial_{x_1} \rond B)^2 + \frac{h^2}{2}(\partial_{x_1} \rond \alpha )^2 \,.
\end{equation*}
Finally we do the following linear canonical change of variables,
\[ \begin{cases}
\tilde{x}_1 &= x_1 - \xi_2\\
\tilde{\xi}_1 &= \xi_1 \\
\tilde{x}_2 &= x_2 - \xi_1 \\
\tilde{\xi}_2 &= \xi_2 \,.
\end{cases} \] Using the linear Egorov Theorem (metaplectic
representation~\cite[Theorem 18.5.9]{HormanderIII}), $\mathscr{L}_h$ is
unitarily equivalent to $\Op_h^w \tilde{H}$ with:
\[ \tilde{H}(\tilde{x}_1, \tilde{x}_2, \tilde{\xi}_1, \tilde{\xi}_2) =
  H(\tilde{x}_1 + \tilde{\xi}_2, \tilde{x}_2 + \tilde{\xi}_1,
  \tilde{\xi}_1, \tilde{\xi}_2) \,, \] and the lemma is proved.
\end{proof}

\begin{lemma}\label{lemma.Lh0}
  The operator $\widetilde{\mathscr{L}}_h= \Op_h^w \tilde{H} $ is
  unitary equivalent to $h \Loh$.
\end{lemma}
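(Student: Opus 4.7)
The plan is to reduce the lemma to three elementary ingredients: a term-by-term comparison of the symbols $\tilde H$ and $\widehat H^0$; the observation that $h$-Weyl quantization in a single phase-space variable becomes the non-semiclassical $1$-Weyl quantization after rescaling the dual coordinate by $h$; and the metaplectic representation of a symplectic dilation in the $(x_1,\xi_1)$-plane that absorbs the residual rescaling.

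First, I would verify by direct inspection of~\eqref{equ:H-tilde} and~\eqref{eq.H0.hat} the pointwise identity
\[
\tilde H(x_1,x_2,\xi_1,\xi_2) \;=\; h\,\widehat H^0\bigl(h^{-1/2}x_1,\,x_2,\,h^{-1/2}\xi_1,\,\xi_2\bigr).
\]
The substitution $x_1\mapsto h^{-1/2}x_1$, $\xi_1\mapsto h^{-1/2}\xi_1$ turns the arguments $\xi_2+h^{1/2}x_1$ and $x_2+h^{1/2}\xi_1$ appearing in $\widehat H^0$ into $\xi_2+x_1$ and $x_2+\xi_1$; the two ``kinetic'' summands pick up a factor $h^{-1}$ that is compensated by the overall factor $h$, while the $\rond V$- and $W$-terms recover precisely the $h$- and $h^2$-coefficients of~\eqref{equ:H-tilde}.

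Next, by changing the integration variable $\xi_1\mapsto h\xi_1$ in the oscillatory integral defining $\Op_h^w$, I would establish the general identity
\[
\Op_h^w\bigl(a(x_1,x_2,\xi_1,\xi_2)\bigr) \;=\; \Op_h^{w,2}\Op_1^{w,1}\bigl(a(x_1,x_2,h\xi_1,\xi_2)\bigr),
\]
valid for any reasonable symbol. Specialising to $a=\tilde H$ and combining with the preceding step rewrites $\widetilde{\mathscr L}_h$ as $h$ times the mixed-quantization operator with symbol $\widehat H^0(h^{-1/2}x_1,x_2,h^{1/2}\xi_1,\xi_2)$.

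Finally, the linear map $\kappa:(x_1,\xi_1)\mapsto(h^{-1/2}x_1,h^{1/2}\xi_1)$ preserves $\dd x_1\wedge\dd\xi_1$ and is therefore symplectic; its metaplectic representation is the unitary dilation $(Mu)(x_1)=h^{1/4}u(h^{1/2}x_1)$ on $\Ld(\R_{x_1})$, which satisfies $M^{-1}x_1M=h^{-1/2}x_1$ and $M^{-1}D_{x_1}M=h^{1/2}D_{x_1}$. The exact Egorov relation $M^{-1}\Op_1^{w,1}(b)M=\Op_1^{w,1}(b\circ\kappa)$, together with the fact that $M$ acts only on $x_1$ and hence commutes with $\Op_h^{w,2}$, closes the argument: conjugation by $M$ recovers $\widehat H^0$ itself and yields $\widetilde{\mathscr L}_h = h\,M^{-1}\Loh M$. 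The only real delicacy I anticipate is the bookkeeping of the many $h^{\pm 1/2}$ factors and the consistent normalisation of $M$; beyond that, each of the three steps is an immediate manipulation within the Weyl calculus.
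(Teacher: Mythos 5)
Your proof is correct, and at its core it is the same argument as the paper's: both identify the unitary that intertwines the two operators with the $L^2$-dilation $u\mapsto h^{1/4}u(h^{1/2}\cdot)$ in the $x_1$-variable (your $M$, the paper's $V$), and both rest on the pointwise rescaling identity $\tilde H(x_1,x_2,\xi_1,\xi_2)=h\,\widehat H^0(h^{-1/2}x_1,x_2,h^{-1/2}\xi_1,\xi_2)$. The difference is purely one of presentation: the paper performs a single change of variables $x_1=h^{1/2}\hat x_1$, $y_1=h^{1/2}\hat y_1$, $\xi_1=h^{1/2}\hat\xi_1$ directly inside the oscillatory integral for $\Op_h^{w,1}\tilde H$, which absorbs both the passage from $h$-quantization to $1$-quantization and the metaplectic dilation in one stroke, whereas you factor the same manipulation into two named Weyl-calculus facts (the identity $\Op_h^{w,1}a=\Op_1^{w,1}a(\cdot,h\cdot)$ and the exact Egorov relation for the symplectic dilation $\kappa$). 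Your decomposition is more modular and makes the role of the metaplectic representation explicit, at the cost of a slightly longer chain of $h^{\pm1/2}$ bookkeeping; the paper's one-shot substitution is shorter but packs the same content implicitly. Either way, the conjugating unitary acts only on $x_1$ and therefore commutes with $\Op_h^{w,2}$, which is the point your last paragraph correctly emphasizes.
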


\begin{proof}
First we split the $x_1$ and $x_2$ quantizations: 
\[ \widetilde{\mathscr{L}}_h = \Op_h^{w,2} \Op_h^{w,1} \tilde{H}\,. \]
Then we can change the semiclassical quantization with respect to
$(x_1,\xi_1)$ into a non-semiclassical one. Removing the
$(x_2,\xi_2)$-dependence in the notations, we have:
\[ \left( \Op_h^{w,1} \tilde{H} \right) u (x_1) = \frac{1}{2 \pi h }
  \int e^{\frac{i}{h}(x_1 - y_1) \xi_1} \tilde{H} \Big{(} \frac{x_1 +
    y_1}{2}, \xi_1\Big{)}u(y_1) \dd y_1 \dd \xi_1 \,.\] We do the
following change of variables
\begin{equation}
  x_1 = h^{1/2} \hat x_1, \, y_1 = h^{1/2} \hat y_1,\, \xi_1 =
  h^{1/2} \hat \xi_1,\label{equ:changement-hat}
\end{equation}
\[ \left( \Op_h^{w,1} \tilde{H} \right) u (x_1) = \frac{1}{2\pi} \int e^{(\hat x _1 - \hat y_1) \hat \xi_1} \tilde{H} \Big{(} h^{1/2} \frac{\hat x_1 + \hat y_1}{2}, h^{1/2} \hat \xi_1 \Big{)} u ( h^{1/2} \hat y_1 ) \dd \hat y_1 \dd \hat \xi_1 \,.\]
If we denote by $V$ the unitary transformation $Vu(\hat x_1) = u( h^{1/2} \hat x_1) h^{1/4}$ then we deduce
\[ \left( \Op_h^{w,1} \tilde{H} \right) u = V^* \Op_1^{w,1} a V u \]
with $a(x_1,\xi_1,h) = \tilde{H}(h^{1/2} x_1, h^{1/2} \xi_1)$. Note that $a = h \widehat{H}^0$ to conclude the proof.
\end{proof}

\begin{remark}
  In section \ref{subsection.symbol.class} we show that
  $(x_2,\xi_2) \mapsto \Op_1^{w,1} \widehat H^0$ belongs to a suitable
  class of operator-valued symbols, and hence can be seen as the
  operatorial symbol of $\widetilde{\mathscr{L}}_h$.
\end{remark}

We now check that the diffeomorphism $\varphi$ behaves well with
respect to the symbol classes.
\begin{lemma}
For any order function $m$, and any function $f : \R^2 \rightarrow \C$,
\[f \in S(m) \Rightarrow \rond{f} \in S(\rond{m})\,.\]
In particular, if $f \in S(1)$, then $\rond{f} \in S(1)$.
\end{lemma}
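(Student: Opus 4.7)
My plan is to reduce the lemma to a statement about the diffeomorphism $\varphi^{-1}$, namely that all its partial derivatives of order $\geq 1$ are bounded, and then conclude via Faà di Bruno's formula.

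First, I would write $\varphi^{-1}(\xi,x) = (\psi(\xi,x), x)$, where $\psi$ is defined implicitly by $A_2(\psi(\xi,x), x) = \xi$, so that $\rond f(\xi,x) = f(\psi(\xi,x), x)$. Faà di Bruno's formula then expresses $\partial^\alpha \rond f (\xi,x)$ as a finite linear combination of products of the form
\[
  (\partial^\beta f)(\varphi^{-1}(\xi,x)) \cdot \prod_{i} \partial^{\gamma_i} \psi(\xi,x),
\]
with $|\beta|\leq|\alpha|$. Since $f\in S(m)$, the factor $(\partial^\beta f)(\varphi^{-1}(\xi,x))$ is bounded by $C_\beta \, m(\varphi^{-1}(\xi,x)) = C_\beta \rond m(\xi,x)$. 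Thus, once I know that every $\partial^{\gamma_i}\psi$ is uniformly bounded, the estimate $|\partial^\alpha \rond f| \leq C_\alpha \rond m$ follows immediately.

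The core step is therefore to bound the derivatives of $\psi$. Differentiating $A_2(\psi(\xi,x), x) = \xi$ and using $\partial_1 A_2 = B$ together with $\partial_2 A_2 = \alpha = \int_0^{q_1}\partial_2 B(s,\cdot)\,\mathrm d s$, I obtain
\[
  \partial_\xi \psi = \frac{1}{B\circ\varphi^{-1}}, \qquad \partial_x \psi = -\frac{\alpha\circ\varphi^{-1}}{B\circ\varphi^{-1}}\,.
\]
Both are bounded: Assumption \ref{Assumption.B.positif} gives $B\geq b_0>0$, and Assumption \ref{Assumption.classeS} asserts $B,\alpha\in S_{\R^2}(1)$, so $B$ and $\alpha$ are themselves bounded. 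Higher derivatives of $\psi$ are then handled by induction on $|\alpha|$: differentiating the above identities produces rational expressions whose numerators involve derivatives of $B$ and $\alpha$ evaluated at $\varphi^{-1}$ and previously computed derivatives of $\psi$, while the denominators are powers of $B\circ\varphi^{-1}\geq b_0$. The key point is that Assumption \ref{Assumption.classeS} guarantees \emph{all} derivatives of $B$ and $\alpha$ are bounded, and $B\geq b_0$ keeps the denominators away from zero, so the induction closes.

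The main obstacle — mild but essential — is precisely the boundedness of derivatives of $\alpha = \partial_2 A_2 = \int_0^{q_1}\partial_2 B(s,q_2)\,\mathrm d s$; without the explicit inclusion of this function in Assumption \ref{Assumption.classeS}, one could not control even $\partial_x\psi$, let alone higher derivatives. The final assertion ($f\in S(1) \Rightarrow \rond f \in S(1)$) is the special case $m\equiv 1$, for which $\rond m = 1$.
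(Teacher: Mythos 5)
Your proof is correct and essentially parallels the paper's: both differentiate $\rond f = f\circ\varphi^{-1}$ via the chain rule and close the estimate using $B\geq b_0$ together with $B,\alpha\in S(1)$. The paper iterates the first-order inverted chain-rule relations $\partial_1\rond f=\tfrac1B\partial_1 f$, $\partial_2\rond f=-\tfrac\alpha B\partial_1 f+\partial_2 f$ directly, while you package the same computation via Fa\`a di Bruno plus a separate induction bounding the derivatives of $\psi=(\varphi^{-1})_1$; the ingredients (including your correct observation that Assumption~\ref{Assumption.classeS} is exactly what controls $\alpha$ and hence $\partial_x\psi$) and the conclusion are identical.
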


\begin{proof}
The derivatives of $\rond{f}$ are related to the derivatives of $f$ by:
\[
\begin{cases}
\partial_1 \rond{f} = \frac{1}{B} \partial_1 f\\
\partial_2 \rond{f} =-\frac{\alpha}{B}\partial_1 f+ \partial_2 f.
\end{cases}
\]
Iterating this formula, and using $B \geq b_0$ for the denominators and $B \in S(1)$ for the numerators, we deduce that:
$$ \vert \partial^{\gamma} \rond{f} (\rond{q}) \vert \leq C_{\gamma} \sum_{\vert \ell \vert \leq \vert \gamma \vert} \vert \partial^{\ell} f(q) \vert,$$
and thus if $f \in S(m)$,
$$ \vert \partial^{\gamma} \rond{f}(\rond{q}) \vert \leq C_{\gamma} m(q) = C_{\gamma} \rond{m}(\rond{q}).$$
\end{proof}

\subsection{Microlocalization of the eigenfunctions}

Using the quadratic behaviour of the symbol $\tilde{H}$ in the
variable $X_1:=(x_1,\xi_1)$ (Equation~\eqref{equ:H-tilde}), we prove
here that the eigenfunctions of $\widetilde{\mathscr{L}}_h$
corresponding to eigenvalues that are $\grandO(h^2)$-close to
$\mu_0 h$ are microlocalized in a band of width
$(\abs{X_1}\leq C h^\delta)$, for all
$\delta\in\interval[open right]0{\frac12}$.

\begin{lemma}\label{lem.microlocalisation}
  Let $\delta \in \left[0,\frac12\right)$ and
  $\chi \in \Cinf_0(\R,\R)$ equal to $1$ on a neighborhood of
  $0$. Then, for any normalized eigenpair $(\lambda, \psi)$ of
  $\widetilde{\mathscr{L}}_h$ with $\lambda \in D(\mu_0h, Ch^2)$:
  \[
    \psi = \Op_h^w \chi(h^{-\delta} x_1) \chi(h^{-\delta} \xi_1) \psi
    + \mathscr{O}_{L^2(\R^2)}(h^{\infty})\,.\]
\end{lemma}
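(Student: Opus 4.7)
The plan is to show that $\widetilde{\mathscr{L}}_h - \lambda$ is semiclassically elliptic on the region $\{|X_1|\geq c h^\delta\}$ of phase space and to construct a parametrix inverting it there; applied to $\psi$, this will immediately yield the stated microlocalization.

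The ellipticity comes from the quadratic-in-$X_1$ structure of $\tilde H$ visible in~\eqref{equ:H-tilde}. I would rewrite the first two terms as
\[\rond B^2\xi_1^2 + (x_1+\rond\alpha\,\xi_1)^2 = \langle M X_1,X_1\rangle, \qquad M=\begin{pmatrix}1&\rond\alpha\\ \rond\alpha&\rond B^2+\rond\alpha^2\end{pmatrix},\]
with $\rond B$ and $\rond\alpha$ evaluated at $(\xi_2+x_1, x_2+\xi_1)$. Then $\det M=\rond B^2\geq b_0^2$ while $\mathrm{tr}\,M=1+\rond B^2+\rond\alpha^2$ is uniformly bounded by Assumption~\ref{Assumption.classeS}, so the smallest eigenvalue of $M$ admits a uniform positive lower bound $c>0$. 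Combined with the estimate $h\rond V + \grandO(h^2) = \grandO(h)$ on the remaining terms, this gives $\Re\tilde H \geq c|X_1|^2 - Ch$ on all of phase space. Since $\lambda=\mu_0 h + \grandO(h^2)$, on the support of $\chi_{\mathrm{out}}:=1-\chi(h^{-\delta}x_1)\chi(h^{-\delta}\xi_1)$ (where $|X_1|\geq c_0 h^\delta$) one obtains
\[\Re(\tilde H-\lambda) \geq c c_0^2 h^{2\delta} - C' h \geq \tfrac12 c c_0^2 h^{2\delta}\]
for $h$ small (using $2\delta<1$), hence also $|\tilde H - \lambda|\geq c''h^{2\delta}$ there.

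I would then construct symbols $b_h$ and a remainder $R_h$ of $L^2$-operator norm $\grandO(h^\infty)$ such that
\[\Op^w_h(b_h)(\widetilde{\mathscr{L}}_h-\lambda)=\Op^w_h(\chi_{\mathrm{out}}) + R_h.\]
The iteration starts from $b_h^{(0)}=\chi_{\mathrm{out}}/(\tilde H-\lambda)$ and corrects the Moyal-composition errors order by order. The correct framework is an anisotropic class of symbols adapted to the metric $h^{-2\delta}(dx_1^2+d\xi_1^2)+dx_2^2+h^{-1}d\xi_2^2$; equivalently, after the rescaling $X_1=h^\delta Y_1$ one is back in the standard semiclassical calculus with small parameter $\tilde h:=h^{1-2\delta}>0$, and each Moyal bracket then gains a factor $\tilde h$. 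Applying the resulting parametrix to $\psi$ together with $(\widetilde{\mathscr{L}}_h-\lambda)\psi=0$ gives $\Op^w_h(\chi_{\mathrm{out}})\psi = \grandO(h^\infty)$, which is the statement of the lemma.

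The main obstacle is the bookkeeping of the two different scales: $(x_1,\xi_1)$ localized at scale $h^\delta$, versus $(x_2,\xi_2)$ which carry the ordinary semiclassical scale. One must verify that $(\tilde H-\lambda)^{-1}\chi_{\mathrm{out}}$ truly lies in the chosen anisotropic class with derivatives in $(x_1,\xi_1)$ costing only $h^{-\delta}$, that Moyal composition closes up with the expected gain $\tilde h$, and that symbols of order $\tilde h^\infty$ correspond to operators of norm $\grandO(h^\infty)$ on $L^2$ via Calder\'on-Vaillancourt. All three points become routine once one commits to the rescaled picture with parameter $\tilde h$.
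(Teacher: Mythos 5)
Your plan is correct but takes a genuinely different route from the paper. The paper's proof of the lemma has two steps: first a crude $\delta=0$ localization obtained by constructing a globally elliptic symbol $a_h=\tilde H+\chi_1$ with $\Re a_h\geq\tilde c\langle X_1\rangle^2$ and inverting it via $a_h^{-1}\in S(\langle X_1\rangle^{-2})$ and Calder\'on--Vaillancourt; then a sharp localization at scale $h^{\delta}$ for $\delta\in(0,\tfrac12)$ obtained from a quantitative \emph{positivity} estimate $\Re \, \mathrm{Op}_h^+\check a_{h,\delta}\geq ch^{2\delta}$ produced by passing to the Toeplitz (Bargmann) quantization, followed by a bootstrap on shrinking supports. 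The Toeplitz detour is there precisely to avoid the loss in the ordinary semiclassical sharp G\aa rding inequality, which would restrict the argument to $\delta<\tfrac14$ (the paper's remark notes Bony's Fefferman--Phong inequality as a second way around this). You instead run an \emph{elliptic parametrix} argument directly on $\{|X_1|\geq ch^\delta\}$: after the rescaling $X_1=h^\delta Y_1$ the effective semiclassical parameter in $Y_1$ is $\tilde h=h^{1-2\delta}$, each Moyal bracket gains $\tilde h$ (or $h$ in the $X_2$ variables), and iterating drives the remainder to $\grandO(h^\infty)$, which applied to $\psi$ gives the lemma. Both routes rest on the same ellipticity inequality $\Re\tilde H\geq c|X_1|^2-Ch$ and the same numerology ($h^{2\delta}\gg h$ for $\delta<\tfrac12$); your argument is uniform in $\delta$ and does not need the preliminary truncation of $\tilde H$ to a bounded symbol $\check H$ that the paper uses before the Toeplitz step, but it does require carefully setting up the two-scale symbol class $S(\langle Y_1\rangle^{-2})$ with distinct gains $\tilde h$ and $h$ in the two factors of phase space (your displayed metric $dx_2^2+h^{-1}d\xi_2^2$ is not quite the $h$-semiclassical one, but the rescaled picture with parameter $\tilde h$, which is what you actually use, is the right framework). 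Neither approach is shorter once the symbol-calculus verifications are written out, but yours is arguably more transparent and avoids the Toeplitz machinery.
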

\begin{proof}
  Let us start by proving the result when $\delta=0$, which is a crude
  microlocalisation. We let
  $\chi_0(x,\xi) = 1 - \chi( x_1) \chi(\xi_1) \in S_{\R^4}(1)$, and
  $\chi_0^w = \Op_h^w \chi_0$. We want to prove that
  $\Vert \chi_0^w \psi \Vert = \grandO(h^{\infty})$. Consider
  \begin{equation}\label{eq.micro01}
    \widetilde{\mathscr{L}}_h \chi_0^w \psi=
    \chi_0^w \widetilde{\mathscr{L}}_h \psi+  \left[ \widetilde{\mathscr{L}}_h , \chi_0^w \right] \psi\,.
  \end{equation}
  Since $\widetilde{\mathscr{L}}_h \psi = \lambda\psi$, we have
  \begin{equation}\label{eq.micro02}
    \| \chi_0^w \widetilde{\mathscr{L}}_h \psi\| \leq \tilde Ch  \Vert \chi_0^w \psi \Vert \,.
  \end{equation}
  If follows from the symbolic calculus (see for
  instance~\cite[Theorem 4.18]{Zworski}) that
  $\left[ \widetilde{\mathscr{L}}_h , \chi_0^w \right]$ is a
  pseudodifferential operator in $hS(1)$. By the Calderón-Vaillancourt
  Theorem, and taking $\underline{\chi}_0$ a cutoff function with the
  same properties of $\chi_0$, which is equal to $1$ on
  $\supp \chi_0$, we have
  \begin{equation}\label{eq.micro03}
    \left\| \left[ \widetilde{\mathscr{L}}_h, \chi_0^w \right] \psi\right\|
    \leq Ch \Vert \underline{\chi}_0^w \psi \Vert
    + \grandO(h^\infty) ||\psi||^2 \,.
  \end{equation}
  (In this text, we use the phrase ``cutoff function'' for smooth
  functions, independent of $h$, taking values in $\interval01$, whose
  support is not necessarily compact.)  From~\eqref{equ:H-tilde}, the
  symbol $\tilde{H}$ of $\widetilde{\mathscr{L}}_h$ satisfies
  \begin{equation*}
    \Re  \tilde{H} \geq c (\xi_1^2 +x_1^2) - Ch\,.
  \end{equation*}
  Let us consider a cutoff function $\chi_1\in S_{\R^2}(1)$ equal to 1
  in a neighborhood of the origin $X_1=0$, and such that, viewed as a
  function of $X\in \R^4$, we have
  $\supp\chi_1\cap\supp{\chi_0}=\emptyset$. We let
  $a_{h}(X):= \tilde H(X)+\chi_1(X_1)$. Then, for some $\tilde c>0$
  and $h$ small enough,
  \[\Re a_h(X)\geq \tilde c\langle X_1\rangle^2\geq \tilde c>0\,.\]
  We have $\frac{1}{a_{h}}\in S(\langle X_1\rangle^{-2})$. With the
  Calder\'on-Vaillancourt theorem, it follows that
  \[\forall\psi\in L^2(\R^2)\,,\quad \|[a_{h}^{-1}]^w\psi\|\leq
    C\|\psi\|\,.\] Thus,
  \[\|[a_{h}^{-1}]^w [a_{h}]^w\psi\|\leq C\|[a_{h}]^w\psi\|\,.\] By
  using again the symbolic calculus,
  \[
    \|\psi\|\leq \tilde C\|[a_{h}]^w\psi\|\,.
  \]
  By using that the supports of $\chi_0$ and $\chi_1$ are disjoint, we
  deduce that
  \[c \Vert \chi_0^w \psi \Vert \leq \| \widetilde{\mathscr{L}}_h
    \chi_0^w \psi\|+\mathscr{O}(h^{\infty})\|\psi\| \,. \] With
  \eqref{eq.micro01}, \eqref{eq.micro02}, and \eqref{eq.micro03}, we
  deduce that
  \[
    c \Vert \chi_0^w \psi \Vert\leq Ch\Vert \underline{\chi}_0^w \psi
    \Vert + \grandO(h^\infty)\|\psi\|\,.
  \]
  Iterating with $\underline{\chi}_0$ instead of $\chi_0$, we get
  $\Vert \chi_0^w \psi \Vert = \grandO(h^{\infty})$ and this concludes
  the proof in the case when $\delta=0$.

  Let us now consider the case $\delta\in\left(0,\frac12\right)$. We
  write again
  \[\tilde H^w \psi = \lambda\psi\,.\]
  Thanks to the rough microlocalization of the eigenfunctions
  established when $\delta=0$, up to a remainder
  $\mathscr{O}(h^\infty)$, we can replace $\tilde H$ by a symbol
  $\check H$ in $S(1)$ and that coincides with $\tilde H$ on
  $\{|X_1|\leq M\}$ for some arbitrary $M>0$ and that satisfies
  \begin{equation}
    \label{eq:lower-bound-X1}
    \Re  \check{H} \geq cp(X_1) - Ch\,,
  \end{equation}
  where $p(X_1)$ equals $|X_1|^2$ near $(0,0)$ and is constant away
  from a neighborhood of $(0,0)$.

  We have
  \[\check H^w\psi=\lambda\psi+R^w_h\psi\,,\quad
    R_h^w\psi=\mathscr{O}(h^\infty)\|\psi\|\,.\] We let
  $\chi_{0,\delta}(x,\xi) = 1 - \chi( h^{-\delta}x_1)
  \chi(h^{-\delta}\xi_1)$. We write again
  \begin{equation}\label{eq.micro01'}
    \check H^w \chi_{0,\delta}^w \psi =
    \chi_{0,\delta}^w \check H^w \psi +
    \left[ \check H^w , \chi_{0,\delta}^w \right] \psi\,.
  \end{equation}
  By the symbolic calculus in $S^\delta(1)$, we see that the symbol of
  $\left[ \check H^w , \chi_{0,\delta}^w \right]$ belongs to
  $h^{1-2\delta}S^\delta(1)$ and, due to the quadratic behaviour in
  $X_1$, actually belongs to $hS^\delta(1)$. Similarly to the case
  $\delta=0$, we get
  \begin{equation}\label{eq.micro03'}
    \left\| \left[ \check H^w, \chi_{0,\delta}^w \right] \psi\right\| \leq Ch \Vert \underline{\chi}_{0,\delta}^w \psi \Vert   + \grandO(h^\infty) \|\psi\|^2 \,.
  \end{equation}
  It is known that, modulo $\grandO(h^\infty)$, the Weyl quantization
  is unitarily equivalent to a positive quantization, namely the
  Toeplitz quantization on the Bargmann space; we denote by $\Op^+_h$
  the corresponding positive quantization on $L^2(\R^2)$. Let $q$ be
  the Toeplitz symbol of $\check H$, so that
  \[
    \Op_h^w \check H = \Op_h^+ q + \grandO(h^\infty).
  \]
  Since $\check H\in S(1)$, we have $q\in S(1)$ and
  $q = \check H + \grandO(h)$, see for instance~\cite[Theorem
  13.10]{Zworski}.

  Now, we consider a smooth cutoff function
  $\check\chi\in \mathscr{C}_0^\infty(\R^2)$ equal to 1 in a
  neighborhood of the origin, whose support is disjoint from the
  support of $\chi_0$, viewed as a function of $X_1$. Let
  $\check\chi_\delta(X_1):=\check\chi(X_1/h^\delta)$, and
  $\check a_{h,\delta}(X):= q(X) +
  h^{2\delta}\check\chi_{\delta}(X_1)$. For some $c>0$ and $h$ small
  enough,~\eqref{eq:lower-bound-X1} gives
  \[
    \Re \check a_{h,\delta}(X)\geq cp(X_1) + ch^{2\delta}\geq c
    h^{2\delta}\,.
  \]
  Hence $\Re\Op_h^+\check a_{h,\delta}\geq c h^{2\delta}$, in the
  sense of selfadjoint operators. Let $\tilde\chi_{\delta}$ be the
  Weyl symbol of $\Op_h^+\check\chi_{\delta}$, so that
  $\Op^+_h \check a_{h,\delta} = \Op_h^w(\check H +
  h^{2\delta}\tilde\chi_{\delta}) + \grandO(h^\infty)$. Using
  again~\cite[Theorem 13.10]{Zworski}, we see that
  $\tilde\chi_{\delta}\in S^\delta(1)$ and takes real values. While
  $\tilde\chi_{\delta}$ cannot vanish on any open set (it is
  analytic), it admits as asymptotic expansion in powers of
  $h^{1-2\delta}$ in the topology of $S^\delta(1)$, whose support is,
  for all fixed $h$, contained in the support of
  $\check\chi_\delta$. In particular
  \begin{equation}
    \tilde\chi_{\delta}^w \chi_{0,\delta}^w = \grandO(h^\infty).
    \label{eq:dis}
  \end{equation}

  Since $\Im (\check H + h^{2\delta}\tilde \chi_\delta) = \grandO(h)$,
  we have
  \[
    ch^{2\delta}\|\varphi\|\leq \| (\check
    H^w+h^{2\delta}\tilde\chi_{\delta}^w)\varphi\| +
    \grandO(h^\infty)\norm{\varphi}\,.
  \]
  Taking $\varphi=\chi_{0,\delta}^w\psi$ and using
  \eqref{eq.micro01'}, \eqref{eq.micro03'} and \eqref{eq:dis}, we find
  that
  \[ch^{2\delta}\|\chi_{0,\delta}^w\psi\|\leq
    Ch\|\underline{\chi}_{0,\delta}^w\psi\|+\mathscr{O}(h^{\infty})\|\psi\|\,.\]
  Since $2\delta<1$, an induction argument (on the size of the support
  of $\chi_{0,\delta}$) gives the result.
\end{proof}

\begin{remark}
  In this proof we have used a detour via the Toeplitz quantization
  because, using the standard $S^\delta$ symbolic calculus, if a
  symbol $a(x,\xi)\in S^\delta$ satisfies $a\geq ch^{2\delta}$, then
  the (precise) G\aa rding inequality only implies that
  \[
    \Op_h^w (a) \geq ch^{2\delta} - \grandO(h^{1-2\delta})~.
  \]
  Therefore, the proof of the lemma would require
  $\delta<\tfrac{1}{4}$. It turns out that one can get a
  better result while staying with the Weyl quantization, using the
  special form of the Fefferman-Phong inequality due to
  Bony~\cite{Bony99}, as follows.

  We first write
  \[
    \mathrm{Op}^w_h a=\mathrm{Op}^w_1
    a(x,h\xi)\,,
  \]
  which is unitarily equivalent to
  $\mathrm{Op}^w_1 a(h^{\frac12}x,h^{\frac12}\xi)$. We have
  \[
    a(h^{\frac12}x,h^{\frac12}\xi)- ch^{2\delta}\geq 0\,.
  \]
  Then, we notice that $a(h^{\frac12}\cdot)$ belongs to $S(1)$ since
  $\delta\in\interval[open right]0{\tfrac12}$. In fact, we even have,
  for all $\gamma\in\N^2$ with $|\gamma|\geq 4$,
  \[|\partial^{\gamma} b_{h,\delta}(h^{\frac12}\cdot)|\leq
    c_\gamma\,,\quad
    b_{h,\delta}(\cdot)=h^{-2+2\delta}(a(\cdot)-\tilde
    ch^{2\delta})\,.
  \]
  Because of this, the Bony-Fefferman-Phong inequality states that,
  for all $\varphi\in L^2(\R^2)$,
  \[
    \langle\mathrm{Op}_1^w
    b_{h,\delta}(h^{\frac12}\cdot)\varphi,\varphi\rangle\geq
    -C\|\varphi\|^2\,,
  \] and thus, after rescaling,
  \[
    \Re\langle\mathrm{Op}_h^w
    a\,\varphi,\varphi\rangle\geq (\tilde
    ch^{2\delta}-Ch^{2-2\delta})\|\varphi\|^2\,.
  \]
  We now see that $\delta<\frac12$ is enough to obtain
  \[
    \langle\mathrm{Op}_h^w
    a\,\varphi,\varphi\rangle\geq
    ch^{2\delta}\|\varphi\|^2\,,\]

\end{remark}

\section{The truncated operator $\widehat{\mathscr{L}}_h$}\label{sec.3.Lh.hat}

It follows from Lemma~\ref{lem.microlocalisation} that eigenpairs
$(\lambda,\varphi)$ of $\Loh$ with
$\lambda \in D(\mu_0, Ch)$ satisfy
\[
  \varphi =( \Op_1^w \chi_{\delta}) \varphi + \grandO_{L^2(\R^2)}(h^{\infty}) \,,
\]
with
$\chi_{\delta}(\hat x_1,\hat \xi_1) = \chi(h^{\frac{1}{2}-\delta} \hat
x_1) \chi(h^{\frac{1}{2}-\delta} \hat \xi_1)\in S(1)$, see
also~\eqref{equ:changement-hat}.  This motivates the introduction of a
so-called \textit{truncated} operator $\widehat{\mathscr{L}}_h$, whose
spectrum, as we shall prove in Section \ref{sec.6.remove.chi}, will be
close to the spectrum of $\Loh$ in the desired region.

\begin{definition}\label{def.Lhat}
  Fix $\delta \in \interval[open]0{\frac12}$ and let $\chi$ be a
  smooth cutoff function on $\R^2$, supported in a small neighborhood
  of $0$, and equal to $1$ near $0$. The truncated operator
  $\widehat{\mathscr{L}}_h$ is the pseudodifferential operator
$$\widehat{\mathscr{L}}_h = \Op_h^{w,2} \Op_1^{w,1} p_h$$
with symbol (see~\eqref{eq.H0.hat})
\begin{multline}\label{symbol-ph}
  p_h ( x, \xi)
  = \rond{B}^2(\xi_2 + h^{\frac12}\chi_\delta x_1, x_2 +
  h^{\frac12}\chi_\delta \xi_1) \xi_1^2 + \left( x_1 + \rond{\alpha}(\xi_2
    + h^{\frac12} \chi_\delta x_1,x_2
    +  h^{\frac12}\chi_\delta \xi_1)  \xi_1 \right)^2\\
  + \rond{V}(\xi_2 + h^{\frac12} \chi_\delta x_1, x_2 + h^{1/2}
  \chi_\delta \xi_1) + h W(\xi_2 + h^{\frac12} \chi_\delta x_1, x_2 +
  h^{\frac12} \chi_\delta \xi_1) \,,
\end{multline}
where $\chi_\delta =\chi(h^{\frac12-\delta} (x_1,\xi_1))$.
\end{definition}

Thanks to this cutoff function, we will expand $\rond{B}$ and
$\rond \alpha$ with respect to $h^{1/2} \chi_{\delta} x_1$ without
increasing the powers of $(x_1,\xi_1)$ at infinity, hence remaining in
a suitable class of symbols. As in the previous section, we use the
notation $X_j=(x_j, \xi_j)$, $j=1,2$, and $X=(X_1,X_2)\in \R^4$.

\subsection{Operator-valued symbol of
  $\widehat{\mathscr{L}}_h$}
\label{subsection.symbol.class}

We now focus on
$\widehat{\mathscr{L}}_h=\Op^{w,2}_h\Op^{w,1}_1(p_h)$. Note that, due
to our assumptions, we have for some $c_1,c_2>0$,
\[c_1( |X_1|^2 +1) \leq p_h\leq c_2 (|X_1|+1)^2\,.\] 

\begin{notation}
  We consider the operator symbol of $\widehat{\mathscr{L}}_h$ defined
  by
  \[\mathsf{P}_h(X_2)=\Op^{w,1}_1(p_h)\,,\]
  which for each fixed $X_2\in\R^2$ acts on the domain
  \[B^2(\R)=\{\psi\in H^2(\R) : x^2\psi\in L^2(\R) \}\,.\]
\end{notation}

\begin{lemma}\label{lem.ferme}
  For all $X_2\in\R^2$, the operator $(B^2(\R), \mathsf{P}_h(X_2))$ is
  closed. Its graph norm is equivalent to $\|\cdot\|_{B^2(\R)}$
  (uniformly in $X_2\in\R^2$ and $h>0$ small enough). In particular,
  $(B^2(\R), \mathsf{P}_h(X_2))$ has compact resolvent.
\end{lemma}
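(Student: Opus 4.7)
My strategy is to compare $\mathsf{P}_h(X_2)$ with a frozen quadratic-form model whose spectral theory is classical, and to treat the difference as a small perturbation. I would first introduce
\[ q_0(x_1,\xi_1) := \rond{B}(X_2)^2 \xi_1^2 + (x_1 + \rond{\alpha}(X_2)\xi_1)^2, \qquad \mathsf{P}_0(X_2) := \Op_1^{w,1}(q_0). \]
The symbol $q_0$ is a real positive definite quadratic form in $(x_1,\xi_1)$ whose associated symmetric matrix has determinant $\rond{B}(X_2)^2 \geq b_0^2$ and trace $1 + \rond{B}(X_2)^2 + \rond{\alpha}(X_2)^2$ bounded from below and above uniformly in $X_2$ (using $B \geq b_0$ and $B,\alpha \in S(1)$). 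A symplectic linear change of variables combined with the metaplectic representation shows that $\mathsf{P}_0(X_2)$ is unitarily equivalent to the rescaled harmonic oscillator $\rond{B}(X_2)(D_{x_1}^2 + x_1^2)$. Hence $\mathsf{P}_0(X_2)$ is self-adjoint on $B^2(\R)$, has compact resolvent, and its graph norm is equivalent to $\|\cdot\|_{B^2(\R)}$, all uniformly in $X_2 \in \R^2$.

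The heart of the argument is to control the perturbation $R_h(X_2) := \mathsf{P}_h(X_2) - \mathsf{P}_0(X_2) = \Op_1^{w,1}(p_h - q_0)$. Here the cutoff $\chi_\delta$ is tailor-made: on $\supp \chi_\delta$ one has $h^{1/2}|\chi_\delta X_1| \leq C h^\delta$, so Taylor-expanding $\rond{B}$ and $\rond{\alpha}$ about $X_2$ produces a decomposition
\[ p_h - q_0 = h^\delta a_h + b_h, \]
where $a_h$ belongs to the Shubin class $S(1 + |X_1|^2)$ (the shifts that affect the coefficients of $\xi_1^2$ and $x_1\xi_1$) and $b_h \in S(1)$ (the terms $\rond{V}$, $hW$, and the higher-order Taylor remainders), both with seminorms uniform in $X_2$ and $h$. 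The hypothesis $\delta < \tfrac12$ is what ensures that iterated derivatives $\partial^\gamma \chi_\delta = \grandO(h^{|\gamma|(\frac12 - \delta)})$ do not spoil these estimates. The Weyl-Hörmander calculus for the metric $|dX_1|^2/(1 + |X_1|^2)$ with weight $m = 1 + |X_1|^2$ then gives
\[ \|R_h(X_2)\psi\|_{L^2(\R)} \leq C h^\delta \|\psi\|_{B^2(\R)} + C'\|\psi\|_{L^2(\R)}. \]

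Combining this bound with the graph-norm equivalence for $\mathsf{P}_0(X_2)$ rewrites as $\|R_h(X_2)\psi\| \leq C h^\delta \|\mathsf{P}_0(X_2)\psi\| + C''\|\psi\|$, i.e., $R_h(X_2)$ is $\mathsf{P}_0(X_2)$-bounded with relative bound $Ch^\delta < 1$ for $h$ small. Kato's perturbation theorem then yields that $\mathsf{P}_h(X_2) = \mathsf{P}_0(X_2) + R_h(X_2)$ is closed on $B^2(\R)$ and that its graph norm is equivalent to that of $\mathsf{P}_0(X_2)$, hence to $\|\cdot\|_{B^2(\R)}$, uniformly in $X_2$ and $h$. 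For the compact resolvent, the same symbol analysis together with the Gårding inequality gives $\mathrm{Re}\, \mathsf{P}_h(X_2) \geq (1 - Ch^\delta)\mathsf{P}_0(X_2) - C$ as quadratic forms, so $\mathsf{P}_h(X_2) - z$ is boundedly invertible for $\mathrm{Re}\, z$ sufficiently negative; composing this bounded inverse with the compact embedding $B^2(\R) \hookrightarrow L^2(\R)$ yields the compactness of the resolvent. The main obstacle in this plan is the precise Shubin-class bookkeeping in the expansion of $R_h(X_2)$; the hypothesis $\delta < \tfrac12$ is precisely what is needed to keep all seminorms uniformly controlled.
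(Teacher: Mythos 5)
Your proposal is correct, but it takes a genuinely different route from the paper's. The paper never isolates a frozen harmonic oscillator: it observes directly that $p_h$ lies in $S(1+|X_1|^2)$ \emph{uniformly} in $(h,X_2)$ (thanks to $\chi_\delta$), hence $p_h \star (1+|X_1|^2)^{-1} \in S(1)$, which by Calder\'on--Vaillancourt gives the upper bound $\|\mathsf{P}_h(X_2)\psi\| \leq C\|(1+|X_1|^2)^w\psi\|$; and that $p_h+1$ is \emph{elliptic} in $S(1+|X_1|^2)$ uniformly, so the parametrix construction for $(p_h+1)\star(1+|X_1|^2)^{-1}$ directly yields the coercivity $\|(\mathsf{P}_h(X_2)+1)\psi\| \geq c\|(|X_1|^2)^w\psi\|$. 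This at once gives closedness, graph-norm equivalence, and Fredholm-index-zero plus injectivity, hence invertibility of $\mathsf{P}_h(X_2)+1$ and compactness of the resolvent. You instead freeze the coefficients at $X_2$, recognize $\mathsf{P}_0(X_2)$ as (unitarily equivalent to) the rescaled oscillator $\rond B(X_2)(D^2+x^2)$, show that $p_h-q_0$ lands in $h^\delta S(1+|X_1|^2)+S(1)$ (the cutoff guaranteeing $h^{1/2}|\chi_\delta X_1|\leq Ch^\delta$ and $\partial^\gamma\chi_\delta=\grandO(h^{(\frac12-\delta)|\gamma|})$), and close with Kato relative-boundedness. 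Your version is heavier machinery for the closedness/graph-norm parts but makes explicit where the harmonic-oscillator model enters, which is thematically aligned with the rest of the paper.

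One step that should be tightened: G\aa rding gives injectivity with closed range for $\mathsf{P}_h(X_2)-z$ when $\Re z\ll 0$, but surjectivity needs a separate word. The cleanest closure within your framework is the Neumann-series factorization $\mathsf{P}_h(X_2)-z = \bigl(\mathrm{Id}+R_h(X_2)(\mathsf{P}_0(X_2)-z)^{-1}\bigr)(\mathsf{P}_0(X_2)-z)$: your bound $\|R_h\psi\|\leq Ch^\delta\|\mathsf{P}_0\psi\|+C'\|\psi\|$ gives $\|R_h(\mathsf{P}_0-z)^{-1}\|\leq 2Ch^\delta + C'\,\dist(z,\spectrum\mathsf{P}_0)^{-1}<1$ for $h$ small and $z=-t$, $t$ large, hence invertibility; compactness then follows from $(\mathsf{P}_0-z)^{-1}$ being compact. (Alternatively, apply the same coercivity to $\mathsf{P}_h(X_2)^*=\Op_1^{w,1}\bar p_h$.) With this addition, your argument is complete and correct.
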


\begin{proof}
It is enough to prove the following two inequalities,
\begin{equation}\label{eq.B2.01}
\Vert \mathsf{P}_h(X_2) \psi \Vert \leq C \Vert (1+ \vert X_1 \vert^2)^w \psi \Vert\,,
\end{equation}
\begin{equation}\label{eq.B2.02}
\Vert (\mathsf{P}_h(X_2) + 1) \psi \Vert \geq c \Vert ( \vert X_1 \vert^2)^w \psi \Vert \,, 
\end{equation}
for all $\psi \in \mathscr{S}(\R)$, and for some positive constants $C$ and $c$ independent of $h$ and $X_2$. Note that
\begin{equation}
  \vert \partial_{X_1}^{\gamma} p_h \vert \leq c_\gamma (\vert X_1
  \vert^2 + 1)\label{equ:d-gamma-p-X1}
\end{equation}
for some constant $c_\gamma$ independent of
$(h,X_2)$. In other words, $p_h$ belongs to
the symbol class $S(1+ \vert X_1 \vert^2)$ uniformly with respect to
$(h,X_2)$. Thus the Weyl product
$p_h \star (1+ \vert X_1 \vert^2)^{-1}$ belongs to $S(1)$ uniformly
with respect to $(h,X_2)$, and by using Calderón-Vaillancourt theorem,
we get
\[\Vert \mathsf{P}_h(X_2) \left[(1+ \vert X_1 \vert^2)^{-1}\right]^w \varphi \Vert \leq C \Vert \varphi \Vert \,, \quad \forall \varphi \in \mathscr{S}(\R)\,, \]
with $C>0$ independent of $(h,X_2)$, and \eqref{eq.B2.01} follows. Actually, $p_h + 1$ is also elliptic in $S(1+ \vert X_1 \vert^2)$ uniformly with respect to $(X_2,h)$:
\[ \exists c_0 >0 \,, \quad \vert p_h + 1 \vert \geq c_0 (1+ \vert X_1 \vert^2)\,.\]
Hence $(p_h +1) \star (1+ \vert X_1 \vert^2)^{-1}$ is elliptic in $S(1)$ and the parametrix construction implies
\[ \forall \varphi \in \mathscr{S}(\R)\,, \quad  \Vert (\mathsf{P}_h(X_2)+1) \left[(1+ \vert X_1 \vert^2)^{-1}\right]^w \varphi \Vert \geq c \Vert \varphi \Vert\,, \]
and \eqref{eq.B2.02} follows.
\end{proof}

Let us consider the class of "bounded" operator-valued symbols (see, for instance, \cite[Chapitre 2]{Keraval} or \cite{Martinez07}):
\begin{multline*}
S(\R^2,\mathcal{L}(B^2(\R), L^2(\R)))\\
=\Big\{\mathsf{P}\in\mathscr{C}^{\infty}(\R^2,\mathcal{L}(B^2(\R), L^2(\R))) : \forall\alpha\in\N^2\,, \exists C_\alpha>0\,,\forall X_2\in\R^2\,,\\
\forall\psi\in B^2(\R)\,,\|\partial^\alpha_{X_2}\mathsf{P}(X_2)\psi\|\leq C_\alpha\|\psi\|_{B^2(\R)} \Big\}\,.
\end{multline*}
Since the inequality~\eqref{equ:d-gamma-p-X1} still holds when $p_h$
is replaced by its $X_2$-derivatives, we get the following.
\begin{lemma}\label{lemma.symbol.class}
The operator symbol $X_2\mapsto \mathsf{P}_h(X_2)$ belongs to $S(\R^2,\mathcal{L}(B^2(\R), L^2(\R)))$.
\end{lemma}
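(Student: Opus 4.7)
The approach is to apply, for each fixed multi-index $\alpha$, essentially the same argument as in the proof of Lemma~\ref{lem.ferme} to the differentiated symbol $\partial^\alpha_{X_2} p_h$. Since $\partial^\alpha_{X_2}\mathsf{P}_h(X_2) = \Op_1^{w,1}(\partial^\alpha_{X_2} p_h)$, the task reduces to a symbol estimate followed by a Calder\'on--Vaillancourt bound.

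The first step is to upgrade the bound~\eqref{equ:d-gamma-p-X1} into the mixed estimate
\begin{equation*}
|\partial^\alpha_{X_2}\partial^\gamma_{X_1} p_h(X_1,X_2)| \leq c_{\alpha,\gamma}(1+|X_1|^2),
\end{equation*}
uniformly in $h \in (0,h_0)$ and $X_2 \in \R^2$, for all $\alpha,\gamma \in \N^2$. This is obtained directly from the explicit formula~\eqref{symbol-ph}: the cutoff $\chi_\delta$ depends only on $X_1$, while $\rond{B}, \rond{\alpha}, \rond{V}, W$ belong to $S_{\R^2}(1)$. By the chain rule, a $X_2$-derivative hits only the arguments $\xi_2 + h^{1/2}\chi_\delta x_1$ and $x_2 + h^{1/2}\chi_\delta \xi_1$, producing an $S(1)$-bounded factor (uniform in $h$ and $X_2$); a $X_1$-derivative either differentiates $\chi_\delta$ (yielding a factor $O(h^{\frac12-\delta})$), or the composite functions (yielding $O(h^{1/2})$), or acts on the polynomial prefactors $\xi_1^2$ and $(x_1 + \rond{\alpha}\xi_1)^2$, which is what generates the weight $(1+|X_1|^2)$.

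For each $\alpha$, the previous estimate says that $\partial^\alpha_{X_2} p_h \in S(1+|X_1|^2)$ uniformly in $(h,X_2)$. Exactly as in Lemma~\ref{lem.ferme}, the Weyl product $(\partial^\alpha_{X_2} p_h) \star (1+|X_1|^2)^{-1}$ belongs to $S(1)$ uniformly, so the Calder\'on--Vaillancourt theorem, combined with the equivalence between the $B^2(\R)$-norm and $\|(1+|X_1|^2)^w \cdot\|_{L^2}$ contained in the proof of Lemma~\ref{lem.ferme}, yields
\begin{equation*}
\|\partial^\alpha_{X_2}\mathsf{P}_h(X_2)\psi\|_{L^2} \leq C_\alpha \|\psi\|_{B^2(\R)},
\end{equation*}
with $C_\alpha$ independent of $h$ and $X_2$. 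This is exactly the bound required by the definition of $S(\R^2, \mathcal{L}(B^2(\R), L^2(\R)))$; the smoothness of $X_2 \mapsto \mathsf{P}_h(X_2)$ follows from the smoothness of $p_h$ in $X_2$ together with the same symbol-class arguments applied to finite-order Taylor remainders.

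The only delicate point is keeping the constants uniform in $h$ and $X_2$. Since $\chi_\delta$ does not depend on $X_2$, the Leibniz expansion of $\partial^\alpha_{X_2}\partial^\gamma_{X_1} p_h$ has the same structure as that of $\partial^\gamma_{X_1} p_h$, so no new phenomenon arises and no step is significantly harder than in Lemma~\ref{lem.ferme}.
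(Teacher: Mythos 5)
Your proposal is correct and takes essentially the same approach as the paper: the paper's argument is precisely to observe that the symbol estimate~\eqref{equ:d-gamma-p-X1} persists when $p_h$ is replaced by its $X_2$-derivatives (using that $\chi_\delta$ is independent of $X_2$ and $\rond{B},\rond{\alpha},\rond{V},W\in S(1)$), and then to invoke the Calder\'on--Vaillancourt argument from Lemma~\ref{lem.ferme}. You spell out the chain-rule bookkeeping and the smoothness of $X_2\mapsto\mathsf{P}_h(X_2)$ in a bit more detail than the paper, but the content is identical.
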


\subsection{Expansion of the symbol of $\widehat{\mathscr{L}}_h$}
\label{subsec.3.2}

We now impose $\delta \in (\frac 1 3, \frac 1 2)$. We prove the
following expansion for the symbol $\mathsf{P}_h = \Op_1^{w,1} p_h$ of
$\widehat{\mathscr{L}}_h$, where the condition $\delta>\frac{1}{3}$
ensures that the remainder $h^{3\delta}R_h$ is indeed negligible with
respect to the other terms.

\begin{lemma}
We have
  \[\mathsf{P}_h= \mathsf{P}_0 + h^{1/2} \mathsf{P}_1 + h \mathsf{P}_2
    +h^{3 \delta}R_h \] 
    for some symbols $\mathsf{P}_0$, $\mathsf{P}_1$, $\mathsf{P}_2$ defined in \eqref{eq.Pj}, and $R_h \in S(\R^2,\mathcal{L}(B^2(\R),L^2(\R)))$ uniformly bounded with respect to $h$. Moreover the
  principal symbol $\mathsf{P}_0(X_2)$ is the following
  $X_2$-dependent "harmonic oscillator":
  $$\mathsf{P}_0(X_2) = \Op_1^{w,1} \left( \rond B^2(\xi_2,x_2)\xi^2_1+(x_1+\rond\alpha(\xi_2,x_2)\xi_1)^2+ \rond V(\xi_2,x_2) \right).$$
\end{lemma}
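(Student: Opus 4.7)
The plan is to Taylor-expand each of the smooth functions $\rond B^2$, $\rond\alpha$, $\rond\alpha^2$, $\rond V$, $W$ appearing in the symbol $p_h$ from Definition \ref{def.Lhat} around the base point $(\xi_2,x_2)$, treating the shift $\eta=(h^{1/2}\chi_\delta x_1, h^{1/2}\chi_\delta\xi_1)$ as a small parameter. The key quantitative fact is that on the support of $\chi_\delta$ one has $|\chi_\delta|\cdot|(x_1,\xi_1)|\leq C h^{\delta-1/2}$, so $\eta=\grandO(h^\delta)$ there, and more generally $\chi_\delta^k$ times any degree-$k$ monomial in $(x_1,\xi_1)$ is globally $\grandO(h^{k(\delta-1/2)})$. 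I would expand $\rond B^2, \rond\alpha, \rond\alpha^2, \rond V$ to second order in $\eta$ (with third-order integral remainder) and $W$ to zeroth order (with first-order remainder), and then collect terms by power of $h^{1/2}$.

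The order-$h^0$ contribution, with no $\chi_\delta$ factor, yields exactly the announced
\[\mathsf{P}_0(X_2)=\Op_1^{w,1}\bigl(\rond B^2(\xi_2,x_2)\xi_1^2+(x_1+\rond\alpha(\xi_2,x_2)\xi_1)^2+\rond V(\xi_2,x_2)\bigr).\]
The coefficient of $h^{1/2}$ defines $\mathsf{P}_1$: the $\Op_1^{w,1}$-quantization of $\chi_\delta$ times a linear combination of $(x_1\partial_{\xi_2}+\xi_1\partial_{x_2})f$ for $f\in\{\rond B^2,\rond\alpha,\rond\alpha^2,\rond V\}$, each multiplied by the appropriate monomial in $(x_1,\xi_1)$ of degree at most $2$ coming from the prefactors $\xi_1^2$, $x_1\xi_1$. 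The coefficient of $h$ defines $\mathsf{P}_2$, combining the analogous second-order Taylor terms (carrying $\chi_\delta^2$) with the constant contribution $W(\xi_2,x_2)$. By the argument used in Lemma \ref{lemma.symbol.class}, together with Assumption \ref{Assumption.classeS}, all three operator symbols $\mathsf{P}_0, \mathsf{P}_1, \mathsf{P}_2$ belong to $S(\R^2,\mathcal{L}(B^2(\R),L^2(\R)))$.

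The remainder is where the substance lies. Each Taylor remainder of the first three functions takes the form $h^{3/2}\chi_\delta^3\, Q_3(x_1,\xi_1)\, g_h(X_1,X_2)$, multiplied by a prefactor ($\xi_1^2$, $x_1\xi_1$, $\xi_1^2$, or $1$) originating from $\rond B^2\xi_1^2$, $2x_1\rond\alpha\xi_1$, $\rond\alpha^2\xi_1^2$, $\rond V$; here $Q_3$ is a degree-$3$ monomial in $(x_1,\xi_1)$ and $g_h$ is a smooth function built from a third derivative of $\rond B^2, \rond\alpha, \rond\alpha^2$ or $\rond V$ evaluated at an intermediate point. Since $\chi_\delta^3 Q_3=\grandO(h^{3\delta-3/2})$ globally and the remaining prefactor lies in $S(1+|X_1|^2)$, the product is an $S(1+|X_1|^2)$-symbol of sup-norm $\grandO(h^{3\delta})$. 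The $X_2$-derivatives fall on $g_h$ or on the smooth bounded functions $\rond B,\rond\alpha,\rond V$ and stay bounded uniformly in $X_2$ and $h$ thanks to Assumption \ref{Assumption.classeS}. The $hW$ expansion contributes an extra term bounded by $\grandO(h^{1+\delta})$, which is $\grandO(h^{3\delta})$ since $\delta<\tfrac12$. Applying Lemma \ref{lemma.symbol.class} once more, the entire remainder is $h^{3\delta}R_h$ with $R_h$ bounded in $S(\R^2,\mathcal{L}(B^2(\R),L^2(\R)))$ uniformly in $h$.

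The main obstacle is the bookkeeping: carefully pairing each $\chi_\delta$ power with the polynomial in $(x_1,\xi_1)$ that it must absorb, so that the resulting symbols remain in $S(1+|X_1|^2)$ with uniformly bounded $X_2$-derivatives, and simultaneously identifying the coefficients of $h^{1/2}$ and $h$ precisely. The size $h^{3\delta}$ of the remainder is exactly what forces the lower bound $\delta>\tfrac13$ imposed at the beginning of this section.
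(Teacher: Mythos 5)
Your proof is correct and follows essentially the same route as the paper: the paper packages the whole symbol as a function $\check p(X_1,X_2,\hbar,\chi_\delta)$ and Taylor-expands in the single variable $\hbar=h^{1/2}$ at $\hbar=0$ (with $\chi_\delta$ frozen as a parameter), whereas you expand the constituent functions $\rond B^2,\rond\alpha,\rond\alpha^2,\rond V, W$ in the shift $\eta=h^{1/2}\chi_\delta X_1$, which produces the identical coefficients $p_0,p_1,p_2$ and the identical integral remainder. Your estimate of the remainder — pairing $\chi_\delta^3$ with the degree-$3$ monomials to get a uniform $\grandO(h^{3\delta-3/2})$ bound, keeping the residual $S(1+|X_1|^2)$ prefactor for the Calderón--Vaillancourt argument of Lemma~\ref{lemma.symbol.class}, and absorbing the $hW$ contribution via $h^{1+\delta}=\grandO(h^{3\delta})$ for $\delta<\tfrac12$ — matches the paper's $h^{3/2}\cdot h^{-3(1/2-\delta)}=h^{3\delta}$ computation exactly.
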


\begin{proof}
  From Formula~\eqref{symbol-ph} we notice that $p_h$ can be seen as a
  smooth function $p_h=\check p(X_1,X_2,\hbar,\chi_\delta)$, where
  $\hbar=h^{1/2}$.  We may Taylor expand the symbol $\check p$ with
  respect to the third variable ``$\hbar$'':
\[
  \check p(X_1,X_2,\hbar,\chi_\delta)=p_0+\hbar p_1+ \hbar^2 p_2 +\hbar^{3} r_\hbar\,,
\]
where
\[
  r_\hbar=\frac{1}{2}\int_0^1(1-t)^2 \partial^{3}_\hbar \check
  p(X_1,X_2,t\hbar,\chi_\delta)\dd t\,,
\]
and
$p_j=\frac{1}{j!}\partial^j_{\hbar} \check
p(X_1,X_2,0,\chi_\delta)$. Note that the $p_j$ still slightly depend
on $h$ (through the cutoff functions $\chi_\delta$). Explicitely, we
have
\[\begin{split}
p_0&=\rond B^2(\xi_2,x_2)\xi^2_1+(x_1+\rond\alpha(\xi_2,x_2)\xi_1)^2+ \rond V(\xi_2,x_2)\,,\\
p_1&=\chi_\delta\left[2\xi^2_1\rond B\nabla\rond B\cdot X_1+2\xi_1(x_1+\rond\alpha\xi_1)\nabla\rond\alpha\cdot X_1+ \nabla \rond V\cdot X_1\right]  \,,\\
p_2&=\chi^2_\delta\left[\xi_1^2(\nabla\rond B\cdot X_1)^2+\xi_1^2\nabla^2\rond B(X_1,X_1)+\xi^2_1(\nabla\rond\alpha\cdot X_1)^2+\xi_1(x_1+\rond\alpha\xi_1)\nabla^2\rond\alpha(X_1,X_1)\right]\\
&+\chi^2_\delta \nabla^2 \rond V(X_1,X_1)+ W(\xi_2,x_2)\,,
\end{split}\]
where the functions $\rond B$, $\rond \alpha$, $\rond V$ and their gradients are implicitely taken at $(\xi_2,x_2)$. Letting 
\begin{equation}\label{eq.Pj}
\mathsf{P}_j(X_2)=\Op^{w,1}_1 p_j \,,
\end{equation}
we notice that
\[\mathsf{P}_h(X_2) =  \mathsf{P}_0(X_2) + \hbar \mathsf{P}_1(X_2) + \hbar^2 \mathsf{P}_2(X_2) + \hbar^{3}\Op^{w,1}_1 r_\hbar\,.\]
By using the Calderón-Vaillancourt theorem, due to the cutoff
functions we can check that
\[\forall \psi\in B^2(\R)\,,\quad \|\partial^\alpha_{X_2}\Op^{w,1}_1 r_\hbar\psi\|\leq C_{\alpha}h^{-3 \left(\frac12-\delta\right)}\|\psi\|_{B^2(\R)}\,.\]
Therefore, we can write
\[\mathsf{P}_h= \mathsf{P}_0 + \hbar \mathsf{P}_1 + \hbar^2 \mathsf{P}_2 +\hbar^{6 \delta}R_\hbar \]
with $R_\hbar\in S(\R^2,\mathcal{L}(B^2(\R),L^2(\R))) $ uniformly
bounded with respect to $h$. This leads to choosing
$\delta\in\left(\frac13,\frac12\right)$ and concludes the proof.

\end{proof}

\subsection{About the principal part $\mathsf{P}_0(X_2)$}
\label{subsec.3.3}

In this section we describe some important properties of
$\mathsf{P}_0(X_2)$. Since it is a `harmonic oscillator', \emph{i.e.}
the quantization of a positive definite quadratic form in $X_1$, we
have the following spectral properties. If we let
\[f_{X_2}(x_1)=C(X_2)e^{ - \frac{\rond{B}}{2( \rond{B}^2 + \rond{\alpha}^2)}  x_1^2 }e^{i\frac{\rond\alpha}{2(\rond B^2+\rond\alpha^2)}x_1^2}\,,\quad C(X_2)=\left(\frac{\rond B}{\pi (\rond B^2+\rond\alpha^2)}\right)^{\frac14}\,.\]
then we have
\[\mathsf{P}_0(X_2)f_{X_2}=\mu(X_2)f_{X_2}\,,\quad \mu(X_2)=\rond B(\xi_2,x_2)+\rond V(\xi_2,x_2)\,.\]
Moreover, the eigenvalues of $\mathsf{P}_0(X_2)$ are in the form 
\[(2n-1)\rond B(\xi_2,x_2)+ \rond V(\xi_2,x_2)\,,\quad n\geq 1\,.\]
Thus $f_{X_2}$ is the ground state of $\mathsf{P}_0(X_2)$. When considering the restriction of $\mathsf{P}_0(X_2)$ to $f_{X_2}^{\perp}$, which is a stable subspace, we get the following lemma.

\begin{lemma}\label{lem.bijpiperp}
  The operator $(\mathsf{P}_0(X_2)-z) : B^2(\R)\cap f_{X_2}^\perp \to f_{X_2}^\perp$ is bijective when $$u \Re (z-\mu_0) + v \mathsf{Im} (z-\mu_0) < 2ub_0,$$ and in particular when $z \in D(\mu_0,Ch)$ if $h$ is small enough.
\end{lemma}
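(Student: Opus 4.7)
The plan is to reduce the bijectivity statement to a standard spectral fact about a self-adjoint harmonic oscillator, and then invoke Assumption~\ref{Assumption} to convert a spectral exclusion into the stated inequality.

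First, since $\rond V(\xi_2,x_2)$ acts as a scalar on $L^2(\R_{x_1})$, I would write
\[
\mathsf{P}_0(X_2) = Q(X_2) + \rond V(\xi_2,x_2)\,\mathrm{Id}\,,
\]
where $Q(X_2) := \Op^{w,1}_1\!\bigl( \rond B^2(\xi_2,x_2)\,\xi_1^2 + (x_1+\rond\alpha(\xi_2,x_2)\,\xi_1)^2 \bigr)$ is the Weyl quantization of a \emph{real} positive-definite quadratic form in $X_1$. Hence $Q(X_2)$ is self-adjoint on $B^2(\R)$ with compact resolvent; its spectrum is $\{(2n-1)\rond B(\xi_2,x_2) : n\geq 1\}$, with $f_{X_2}$ as normalized ground state. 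The orthonormal basis $(f_{X_2,n})_{n\geq 1}$ makes $f_{X_2}^\perp$ a closed invariant subspace. Setting $\zeta := z - \rond V(\xi_2,x_2)$, the spectral theorem reduces the bijectivity of $(\mathsf{P}_0(X_2)-z) : B^2(\R)\cap f_{X_2}^\perp \to f_{X_2}^\perp$ to the exclusion
\[
z \neq (2n-1)\rond B(\xi_2,x_2) + \rond V(\xi_2,x_2)\quad\text{for every } X_2\in\R^2,\ n\geq 2\,.
\]

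The key algebraic step is then the following. Suppose $z = (2n-1)\rond B + \rond V$ at some $(\xi_2,x_2)$ with $n\geq 2$. Writing $z-\mu_0 = (\rond B + \rond V - \mu_0) + 2(n-1)\rond B$ and using that $\rond B$ is real (so $\Re((u-iv)\rond B) = u\rond B$), one gets
\[
u\Re(z-\mu_0) + v\Im(z-\mu_0) = \Re\bigl((u-iv)(\rond B + \rond V - \mu_0)\bigr) + 2(n-1)\,u\rond B\,.
\]
By Assumption~\ref{Assumption}, the first right-hand term equals $F(\varphi^{-1}(\xi_2,x_2)) - \min F \geq 0$, while the second is $\geq 2ub_0$ since $n\geq 2$ and $\rond B\geq b_0$. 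Therefore the strict inequality $u\Re(z-\mu_0)+v\Im(z-\mu_0) < 2ub_0$ rules out any such $z$ for every $X_2\in\R^2$, and bijectivity follows.

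The ``in particular'' clause is immediate: for $z\in D(\mu_0,Ch)$ one has $|u\Re(z-\mu_0)+v\Im(z-\mu_0)| \leq \sqrt{u^2+v^2}\,Ch$, which is $<2ub_0$ for $h$ small enough. The only subtle point is bookkeeping: one must exploit the reality of $\rond B$ both to split off $\rond V$ as a pure scalar (so that $\mathsf{P}_0(X_2)-z$ inherits its kernel and cokernel from a self-adjoint operator) and to produce the identity $\Re((u-iv)\rond B) = u\rond B$ used above; once these are in hand, the rest is the spectral theorem applied to $Q(X_2)$.
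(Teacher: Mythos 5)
Your argument is correct and uses the same essential ingredients as the paper's proof: self-adjointness of the quadratic part of $\mathsf{P}_0(X_2)$, the spectral gap coming from the second eigenvalue $3\rond B \geq \rond B + 2b_0$, and Assumption~\ref{Assumption} via $\Re\bigl((u-iv)(\rond B + \rond V - \mu_0)\bigr) \geq 0$. The only cosmetic difference is that the paper proves a coercivity lower bound on $\Re\bigl[(u-iv)\langle(\mathsf{P}_0(X_2)-z)\psi,\psi\rangle\bigr]$ for $\psi \in f_{X_2}^\perp$ and then invokes selfadjointness of the harmonic oscillator to pass from injectivity-with-closed-range to bijectivity, whereas you directly characterize the spectrum of the restriction to $f_{X_2}^\perp$ as $\{(2n-1)\rond B + \rond V : n\geq 2\}$ and exclude $z$ from it.
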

\begin{proof}
We notice that, for all $\psi\in B^2(\R)\cap f_{X_2}^\perp$,
\begin{small}
\begin{align*}
\Re \left[ (u-iv) \langle (\mathsf{P}_0(X_2)-z) \psi,\psi \rangle \right] &\geq \left[ u (3\rond B (X_2) + \Re (\rond V(X_2) - z)) + v \mathsf{Im} ( \rond V(X_2) - z) \right] \Vert \psi \Vert^2 \\
&\geq \left[ 2 u b_0 + u \Re(\mu_0 - z) + v \mathsf{Im} (\mu_0 -z) \right] \Vert \psi \Vert^2,
\end{align*}
\end{small}
where we used that $u (\rond B (X_2) + \Re \rond V(X_2)) + v \mathsf{Im} \rond V(X_2) \geq \mu_0$ (Assumption \ref{Assumption}). This shows that $\mathsf{P}_0(X_2)-z$ is injective with closed range. But $\mathsf{P}_0(X_2)-z$ is a selfadjoint harmonic oscillator (up to an additive constant), so the conclusion follows.	
\end{proof}

\begin{proposition}\label{prop.matrix.P0}
Let $z\in D(\mu_0, Ch)$. We consider the operator
\[\mathscr{P}_0(X_2,z)=\begin{pmatrix}
\mathsf{P}_0(X_2)-z&\cdot f_{X_2}\\
\langle\cdot, f_{X_2}\rangle&0
\end{pmatrix}\in\mathcal{L}(B^2(\R)\times\C,L^2(\R)\times\C)\,.\]
We have
\[\mathscr{P}_0(\cdot,z)\in S(\R^2,\mathcal{L}(B^2(\R)\times\C,L^2(\R)\times\C))\,.\]
Moreover, if $h$ is small enough, $\mathscr{P}_0(X_2,z)$ is a bijection and
\[\mathscr{Q}_0:=\mathscr{P}_0^{-1}=\begin{pmatrix}
(\mathsf{P}_0-z)^{-1}\Pi^{\perp}&\cdot f_{X_2}\\
\langle\cdot, f_{X_2}\rangle&z-[\rond B+ \rond V]
\end{pmatrix}\in S(\R^2,\mathcal{L}(L^2(\R)\times\C,B^2(\R)\times\C))
\, ,\]
where $\Pi^{\perp} =\mathrm{Id}-\langle\cdot, f_{X_2}\rangle f_{X_2}.$
\end{proposition}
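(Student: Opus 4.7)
The plan is to treat $\mathscr{P}_0(X_2,z)$ as a Grushin problem with block structure
\[
\mathscr{P}_0(X_2,z)=\begin{pmatrix} \mathsf{P}_0(X_2)-z & R_-(X_2)\\ R_+(X_2) & 0\end{pmatrix},
\]
where $R_-(X_2)\alpha=\alpha f_{X_2}$ and $R_+(X_2)\psi=\langle\psi,f_{X_2}\rangle$. First I would check that $\mathscr{P}_0$ lies in the announced symbol class. The top-left block is handled by Lemma~\ref{lemma.symbol.class}, and subtracting the constant $z$ does not affect the symbol class. The map $X_2\mapsto f_{X_2}\in L^2(\R)$ is smooth with all $X_2$-derivatives bounded in $L^2(\R)$ (and even in $B^2(\R)$) because $f_{X_2}$ is an explicit Gaussian whose coefficients $\rond B$, $\rond\alpha$ lie in $S_{\R^2}(1)$ and $\rond B$ is bounded below by $b_0$; each $\partial^\gamma_{X_2}f_{X_2}$ is the product of a polynomial in $x_1$ with the same Gaussian, whose $L^2$-norm is then controlled uniformly in $X_2$. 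This gives the symbol-class property for $R_-$, and the same computation (or duality) gives it for $R_+$.

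Next I would prove bijectivity by solving the system
\[
(\mathsf{P}_0(X_2)-z)\psi+\alpha f_{X_2}=g,\qquad \langle \psi,f_{X_2}\rangle=\beta.
\]
The key algebraic observation is that $\mathsf{P}_0(X_2)=\mathsf{H}_0(X_2)+\rond V(X_2)\,\mathrm{Id}$, where $\mathsf{H}_0(X_2)$ is the self-adjoint harmonic oscillator from Section~\ref{subsec.3.3}, so the subspaces $\mathbf{C}f_{X_2}$ and $f_{X_2}^{\perp}$ are both invariant under $\mathsf{P}_0(X_2)$. Writing $\psi=\beta f_{X_2}+\psi^\perp$ with $\psi^\perp\in f_{X_2}^\perp$ (the $f_{X_2}$-component being forced by the second equation) and projecting the first equation onto $\mathbf{C}f_{X_2}$ and $f_{X_2}^{\perp}$: using $\mathsf{P}_0(X_2)f_{X_2}=(\rond B+\rond V)(X_2)f_{X_2}$, the $f_{X_2}$-projection yields $\alpha=\langle g,f_{X_2}\rangle+(z-(\rond B+\rond V))\beta$, while the orthogonal projection becomes $(\mathsf{P}_0(X_2)-z)\psi^\perp=\Pi^\perp g$, which Lemma~\ref{lem.bijpiperp} inverts as $\psi^\perp=(\mathsf{P}_0(X_2)-z)^{-1}\Pi^\perp g$. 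Reassembling the pieces yields exactly the matrix $\mathscr{Q}_0$ announced in the statement.

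The main obstacle is the last point: verifying that $\mathscr{Q}_0$ itself belongs to $S(\R^2,\mathcal{L}(L^2(\R)\times\C,B^2(\R)\times\C))$, the delicate entry being $(\mathsf{P}_0(X_2)-z)^{-1}\Pi^\perp$. The $L^2\to L^2$ norm comes from a quantitative spectral gap: the spectrum of $\mathsf{P}_0(X_2)|_{f_{X_2}^\perp}$ starts at $3\rond B+\rond V$, which stays at distance at least $2b_0-\grandO(h)$ from any $z\in D(\mu_0,Ch)$, so the computation in Lemma~\ref{lem.bijpiperp} gives an $X_2$-uniform resolvent estimate. Combining this with the Weyl-product/parametrix estimate already used in the proof of Lemma~\ref{lem.ferme} upgrades $L^2$-control into $B^2$-control. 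For the $X_2$-derivatives I would differentiate the identity $(\mathsf{P}_0-z)(\mathsf{P}_0-z)^{-1}\Pi^\perp=\Pi^\perp$, obtaining expressions of the form $(\mathsf{P}_0-z)^{-1}\Pi^\perp(\partial_{X_2}\mathsf{P}_0)(\mathsf{P}_0-z)^{-1}\Pi^\perp$, plus correction terms arising from $\partial_{X_2}\Pi^\perp=-\langle\cdot,\partial_{X_2}f_{X_2}\rangle f_{X_2}-\langle\cdot, f_{X_2}\rangle\partial_{X_2}f_{X_2}$. An induction on the order of derivation, combined with the symbol-class properties of $\mathsf{P}_0$ and of $X_2\mapsto f_{X_2}$ already established, propagates the uniform bounds. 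The remaining entries of $\mathscr{Q}_0$ are treated exactly as for $\mathscr{P}_0$, and $z-(\rond B+\rond V)\in S_{\R^2}(1)$ is trivially a symbol.
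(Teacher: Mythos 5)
Your proof is correct and follows essentially the same route as the paper: decompose along $\mathbf{C}f_{X_2}\oplus f_{X_2}^\perp$, use that $\mathsf{P}_0(X_2)$ leaves both subspaces invariant to read off $\alpha$ from the $f_{X_2}$-projection, and invoke Lemma~\ref{lem.bijpiperp} to invert the orthogonal part, thereby recovering the announced $\mathscr{Q}_0$. You also spell out the symbol-class verification for $\mathscr{Q}_0$ (the $(\mathsf{P}_0-z)^{-1}\Pi^\perp$ entry in particular), which the paper's terse proof leaves implicit, but this is a welcome elaboration rather than a deviation.
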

\begin{proof}
Let us consider $(\psi,\beta)\in L^2(\R)\times\C$ and look for $(\varphi,\alpha)\in B^2(\R)\times\C$ such that
\[(\mathsf{P}_0-z)\varphi=\psi-\alpha f_{X_2}\,,\quad \langle\varphi,f_{X_2}\rangle=\beta\,.\]
The first equation has solutions only if \[\langle \psi-\alpha f_{X_2},f_{X_2}\rangle=(\mu-z)\langle\varphi,f_{X_2}\rangle=(\mu-z)\beta\,,\]
where $\mu=\mu(X_2)= \rond{B}(X_2)+ \rond{V}(X_2)$;
this is equivalent to $\alpha=\langle\psi,f_{X_2}\rangle+(z-\mu)\beta$. With this choice, we write
\[(\mathsf{P}_0-z)(\varphi-\beta f_{X_2})=\psi-\alpha f_{X_2}+\beta(\mu-z)f_{X_2}\in f_{X_2}^\perp\,.\]
It remains to apply Lemma \ref{lem.bijpiperp}.
\end{proof}

\section{Parametrix construction and consequences}\label{sec.4.Parametrix}
\subsection{Parametrix construction}
Let us now consider the \enquote{Grushin operator symbol}
\[X_2\mapsto\mathscr{P}_h(X_2)=\begin{pmatrix}
\mathsf{P}_h(X_2)-z&\cdot f_{X_2}\\
\langle\cdot,f_{X_2}\rangle&0
\end{pmatrix}\in S(\R^2,\mathcal{L}(B^2(\R)\times\C, L^2(\R)\times\C))\,,\]
and notice that
\begin{equation}\label{eq.41}
\Op^{w,2}_h\mathscr{P}_h=\begin{pmatrix}
\widehat{\mathscr{L}}_h-z&\mathfrak{P}^*\\
\mathfrak{P}&0
\end{pmatrix}\,,\quad \mathfrak{P}=\Op^{w,2}_h\langle\cdot, f_{X_2}\rangle\,.
\end{equation}
Note that for $\delta \in (\frac13, \frac12)$,
\[\mathscr{P}_h(X_2)=\mathscr{P}_0+h^{1/2}\mathscr{P}_1+h\mathscr{P}_2+ \grandO(h^{3\delta})\,,\]
with $\mathscr{P}_0$ defined in Proposition \ref{prop.matrix.P0}, and for $j\geq 1$,
\[\mathscr{P}_j=\begin{pmatrix}
\mathsf{P}_j(X_2)&0\\
0&0
\end{pmatrix}\,,\]
and the remainder belongs to $S(\R^2,\mathcal{L}(B^2(\R)\times\C, L^2(\R)\times\C))$. The following proposition is an approximate parametrix construction.
\begin{proposition}\label{prop.parametrix}
For $z \in D(\mu_0, Ch)$ we consider
\[\begin{cases}
\mathscr{Q}_1=-\mathscr{Q}_0\mathscr{P}_1\mathscr{Q}_0 \\
\mathscr{Q}_2=-\mathscr{Q}_0\mathscr{P}_2\mathscr{Q}_0-\mathscr{Q}_1\mathscr{P}_1\mathscr{Q}_0-\frac{1}{2i}\{\mathscr{Q}_0,\mathscr{P}_0\}\mathscr{Q}_0\,,
\end{cases}\]
and we let
\[\mathscr{Q}^{[2]}_h=\mathscr{Q}_0+h^{1/2}\mathscr{Q}_1+h \mathscr{Q}_2\,.\]
Then we have
\[\left(\Op^{w,2}_h\mathscr{Q}^{[2]}_h\right)\left(\Op^{w,2}_h\mathscr{P}_h\right)=\mathrm{Id}+\mathscr{R}_{h,z}\,,\quad \mathscr{R}_{h,z}=\mathscr{O}(h^{3 \delta})\,,\]
where the bounded operator $\mathscr{R}_{h,z}$ depends on $z$ analytically. We also have
\[\left(\Op^{w,2}_h\mathscr{P}_h\right)\left(\Op^{w,2}_h\mathscr{Q}^{[2]}_h\right)=\mathrm{Id}+\widetilde{\mathscr{R}}_{h,z}\,,\quad \widetilde{\mathscr{R}}_{h,z}=\mathscr{O}(h^{3\delta})\,,\]
Moreover, the operator $\mathscr{Q}^{[2]}_h$ has the form
$$\mathscr{Q}^{[2]}_{h}(X_2) = \begin{pmatrix}
* & * \\
* & \mathscr{Q}^{[2]}_{h,\pm}(X_2)
\end{pmatrix},$$
with the scalar function
\begin{multline*}
\mathscr{Q}^{[2]}_{h,\pm}=z-\mu(X_2)-h^{1/2} \langle \mathsf{P}_1(X_2)f_{X_2}, f_{X_2}\rangle\\
+h\left(-\langle \mathsf{P}_2(X_2)f_{X_2},f_{X_2}\rangle+\langle\mathsf{P}_1(X_2)(\mathsf{P}_0(X_2)-z)^{-1}\Pi^{\perp}\mathsf{P}_1(X_2) f_{X_2},f_{X_2}\rangle\right)\,.
\end{multline*}
\end{proposition}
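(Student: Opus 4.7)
The plan is to carry out a Moyal-calculus computation of the compositions $\mathscr{Q}_h^{[2]}\star\mathscr{P}_h$ and $\mathscr{P}_h\star\mathscr{Q}_h^{[2]}$, checking order by order in $h^{1/2}$ that the respective coefficients cancel, and then reading off the scalar $\mathscr{Q}^{[2]}_{h,\pm}$ from the $(2,2)$ block of the Ansatz. For this I would use the Weyl symbolic calculus in the operator-valued class $S(\R^2,\mathcal{L}(B^2(\R)\times\C,L^2(\R)\times\C))$ as in \cite{Keraval,Martinez07}: products of operator-valued symbols quantize through the Moyal asymptotic expansion $A\star B = AB + \frac{h}{2i}\{A,B\} + \grandO(h^2)$, the bracket being taken with respect to $X_2=(x_2,\xi_2)$.

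First I would expand $\mathscr{Q}_h^{[2]}\star\mathscr{P}_h$ by half-integer powers of $h$. Using the expansion of $\mathscr{P}_h$ established in the previous subsection and the definitions of $\mathscr{Q}_1,\mathscr{Q}_2$, the order $h^0$ term equals $\mathscr{Q}_0\mathscr{P}_0=\mathrm{Id}$ by Proposition \ref{prop.matrix.P0}; the order $h^{1/2}$ term $\mathscr{Q}_0\mathscr{P}_1+\mathscr{Q}_1\mathscr{P}_0$ vanishes after substituting $\mathscr{Q}_1=-\mathscr{Q}_0\mathscr{P}_1\mathscr{Q}_0$ and using $\mathscr{Q}_0\mathscr{P}_0=\mathrm{Id}$; the order $h^1$ term $\mathscr{Q}_0\mathscr{P}_2+\mathscr{Q}_1\mathscr{P}_1+\mathscr{Q}_2\mathscr{P}_0+\frac{1}{2i}\{\mathscr{Q}_0,\mathscr{P}_0\}$ vanishes by direct substitution of the formula for $\mathscr{Q}_2$. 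For the composition $\mathscr{P}_h\star\mathscr{Q}_h^{[2]}$ the scheme is symmetric; the order $h^{1/2}$ cancellation is immediate, while the order $h^1$ cancellation now requires the algebraic identity $\mathscr{P}_0\{\mathscr{Q}_0,\mathscr{P}_0\}\mathscr{Q}_0=-\{\mathscr{Q}_0,\mathscr{P}_0\}$. I would derive this by differentiating the pointwise inverse relations $\mathscr{P}_0\mathscr{Q}_0=\mathscr{Q}_0\mathscr{P}_0=\mathrm{Id}$ with respect to $X_2$, which yields $\mathscr{P}_0\,\partial_j\mathscr{Q}_0=-(\partial_j\mathscr{P}_0)\mathscr{Q}_0$, and hence after expansion of each bracket the desired cancellation.

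The remainder $\mathscr{R}_{h,z}$ then gathers two contributions: the symbol-level error from the expansion of $\mathscr{P}_h$, which is $\grandO(h^{3\delta})$ in the operator-valued class, and the truncation error of the Moyal expansion, which is $\grandO(h^{3/2})$. Since $\delta\in(\tfrac13,\tfrac12)$ forces $3\delta<\tfrac32$, the former dominates, giving $\mathscr{R}_{h,z}=\grandO(h^{3\delta})$. The analytic dependence on $z$ is inherited from $\mathscr{Q}_0$, which depends holomorphically on $z$ through the resolvent $(\mathsf{P}_0-z)^{-1}\Pi^\perp$ and the scalar $z-\mu(X_2)$, see Proposition \ref{prop.matrix.P0} and Lemma \ref{lem.bijpiperp}. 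Uniform boundedness of the quantized remainder on $L^2(\R^2)$ follows from Calder\'on--Vaillancourt applied in the operator-valued setting.

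Finally, I would compute the $(2,2)$ entry of $\mathscr{Q}_0+h^{1/2}\mathscr{Q}_1+h\mathscr{Q}_2$. With $(\mathscr{Q}_0)_{12}\beta=\beta f_{X_2}$, $(\mathscr{Q}_0)_{21}\varphi=\langle\varphi,f_{X_2}\rangle$, $(\mathscr{Q}_0)_{22}=z-\mu$, and the block-diagonal shape of $\mathscr{P}_1,\mathscr{P}_2$, a direct block calculation gives $(\mathscr{Q}_1)_{22}=-\langle \mathsf{P}_1 f_{X_2},f_{X_2}\rangle$ from $-(\mathscr{Q}_0\mathscr{P}_1\mathscr{Q}_0)_{22}$, and at order $h$ the two contributions $-\langle \mathsf{P}_2 f_{X_2},f_{X_2}\rangle$ and $\langle \mathsf{P}_1(\mathsf{P}_0-z)^{-1}\Pi^\perp \mathsf{P}_1 f_{X_2},f_{X_2}\rangle$ from $-(\mathscr{Q}_0\mathscr{P}_2\mathscr{Q}_0)_{22}$ and $-(\mathscr{Q}_1\mathscr{P}_1\mathscr{Q}_0)_{22}$ respectively. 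I expect the main obstacle to be handling the Poisson-bracket contribution $-\frac{1}{2i}(\{\mathscr{Q}_0,\mathscr{P}_0\}\mathscr{Q}_0)_{22}$ coming from the third summand in the formula for $\mathscr{Q}_2$: one has to verify, using the normalization $\|f_{X_2}\|=1$, the eigenfunction relation $\mathsf{P}_0(X_2)f_{X_2}=\mu(X_2)f_{X_2}$ and its $X_2$-derivatives, that this block entry is absorbed in the displayed scalar formula for $\mathscr{Q}^{[2]}_{h,\pm}$ (and in particular that it does not produce a further independent bracket term at order $h$).
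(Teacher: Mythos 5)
Your plan follows the paper's own route: expand $\mathscr{Q}^{[2]}_h\star\mathscr{P}_h$ (and the reverse) in powers of $\hbar=h^{1/2}$ using the Moyal law, check cancellations order by order, identify the dominant remainder as the symbol-expansion error $\grandO(h^{3\delta})$ (since $3\delta<\tfrac32$), and then read off the $(2,2)$ entry block by block. Your algebraic verifications of the $\hbar^0,\hbar^1,\hbar^2$ cancellations are correct, and the identity $\mathscr{P}_0\{\mathscr{Q}_0,\mathscr{P}_0\}\mathscr{Q}_0=-\{\mathscr{Q}_0,\mathscr{P}_0\}$ obtained by differentiating $\mathscr{P}_0\mathscr{Q}_0=\mathscr{Q}_0\mathscr{P}_0=\mathrm{Id}$ is a clean way to treat the opposite composition. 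The block extractions of $(\mathscr{Q}_1)_{\pm}$, $(-\mathscr{Q}_0\mathscr{P}_2\mathscr{Q}_0)_{\pm}$ and $(-\mathscr{Q}_1\mathscr{P}_1\mathscr{Q}_0)_{\pm}$ are also right.

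The genuine gap is precisely the point you flag at the end: the proof is incomplete without showing that the bracket contribution $\left(\{\mathscr{Q}_0,\mathscr{P}_0\}\mathscr{Q}_0\right)_{\pm}$ \emph{vanishes}, since otherwise the displayed scalar formula for $\mathscr{Q}^{[2]}_{h,\pm}$ would acquire an additional order-$h$ term. You list the right ingredients (normalization $\|f_{X_2}\|=1$, the eigenvalue relation $\mathsf{P}_0 f_{X_2}=\mu f_{X_2}$ and its $X_2$-derivatives) but you do not actually carry out the computation, and crucially you do not identify the structural fact that makes it work: $\mathsf{P}_0-\mu$ is a \emph{selfadjoint} operator (the quantized real quadratic form $\rond B^2\xi_1^2+(x_1+\rond\alpha\xi_1)^2$ minus the real constant $\rond B$; the complex potential $\rond V$ cancels against $\mu$). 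A direct block computation gives
\begin{align*}
\left(\{\mathscr{Q}_0,\mathscr{P}_0\}\mathscr{Q}_0\right)_{\pm}
&=\langle(\partial_x \mathsf{P}_0)f,\partial_\xi f\rangle-\langle(\partial_\xi \mathsf{P}_0)f,\partial_x f\rangle\\
&\quad+(\mu-z)\bigl(\langle\partial_x f,\partial_\xi f\rangle-\langle\partial_\xi f,\partial_x f\rangle\bigr),
\end{align*}
and differentiating the eigenvalue relation in $x$ and $\xi$ allows one to rewrite this as
\[
-\langle(\mathsf{P}_0-\mu)\partial_x f,\partial_\xi f\rangle+\langle(\mathsf{P}_0-\mu)\partial_\xi f,\partial_x f\rangle,
\]
which vanishes precisely because $\mathsf{P}_0-\mu$ is selfadjoint. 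Without this observation, the argument for the stated formula of $\mathscr{Q}^{[2]}_{h,\pm}$ is incomplete.
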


\begin{proof}
The composition $\Op^{w,2}_h\mathscr{Q}^{[2]}_h\Op^{w,2}_h\mathscr{P}_h$ gives a new pseudo-differential operator (with operator symbol). This symbol is given by the usual $h$-Moyal composition law (with $\hbar = h^{1/2}$)
$\mathscr{Q}^{[2]}_h\star \mathscr{P}_h$, and we have
\[\begin{split}
\mathscr{Q}^{[2]}_h\star \mathscr{P}_h=&(\mathscr{Q}_0+\hbar\mathscr{Q}_1+\hbar^2\mathscr{Q}_2)\star(\mathscr{P}_0+\hbar\mathscr{P}_1+\hbar^2\mathscr{P}_2+ \grandO(\hbar^{6\delta})\\
=&\mathscr{Q}_0\mathscr{P}_0+\frac{\hbar^2}{2i}\{\mathscr{Q}_0,\mathscr{P}_0\}+\hbar\left(\mathscr{Q}_0\mathscr{P}_1+\mathscr{Q}_1\mathscr{P}_0\right)\\
&+\hbar^2\left(\mathscr{Q}_2\mathscr{P}_0+\mathscr{Q}_0\mathscr{P}_2+\mathscr{Q}_1\mathscr{P}_1\right)+\mathscr{O}(\hbar^{6 \delta})\,.
\end{split}
\]
This leads to the formulas of the $\mathscr{Q}_j$. Let us now compute $\mathscr{Q}^{[2]}_{h,\pm}$ (slightly departing from the traditional "$-+$" subscript in Grushin problems, we use the subscript $\pm$ for the lower-right coefficient of block matrices). An easy product of operator matrices gives
\[\mathscr{Q}_{1,\pm}=-\langle \mathsf{P}_1(X_2)f_{X_2}, f_{X_2}\rangle\,.\]
In the same way, we get
\[\left(-\mathscr{Q}_0\mathscr{P}_2\mathscr{Q}_0\right)_{\pm}=-\langle \mathsf{P}_2(X_2)f_{X_2},f_{X_2}\rangle\,.\]
From
\[-\mathscr{Q}_1\mathscr{P}_1\mathscr{Q}_0=\mathscr{Q}_0\mathscr{P}_1\mathscr{Q}_0\mathscr{P}_1\mathscr{Q}_0\,,\]
we obtain
\[\left(-\mathscr{Q}_1\mathscr{P}_1\mathscr{Q}_0\right)_{\pm}=\langle\mathsf{P}_1(\mathsf{P}_0-z)^{-1}\Pi^{\perp}\mathsf{P}_1 f_{X_2},f_{X_2}\rangle\,.\]	
Using
\[\{\mathscr{Q}_0,\mathscr{P}_0\}=\partial_\xi\mathscr{Q}_0\partial_x\mathscr{P}_0-\partial_x\mathscr{Q}_0\partial_\xi\mathscr{P}_0\,,\]
another computation gives
\[\begin{split}
\left(\{\mathscr{Q}_0,\mathscr{P}_0\}\mathscr{Q}_0\right)_{\pm}
=&\langle(\partial_x \mathsf{P}_0)f_{X_2},\partial_\xi f_{X_2}\rangle-\langle(\partial_\xi \mathsf{P}_0)f_{X_2},\partial_x f_{X_2}\rangle
\\
&+(\mu-z)\left(\langle\partial_x f_{X_2},\partial_\xi f_{X_2}\rangle-\langle\partial_\xi f_{X_2},\partial_x f_{X_2}\rangle\right)\,.
\end{split}\]
Note that, since $f_{X_2}$ is an $L^2$-normalized eigenfunction,
\begin{multline*}
0=\partial_x\langle(\mathsf{P}_0-z)f_{X_2},\partial_\xi f_{X_2}\rangle=\langle(\partial_x \mathsf{P}_0)f_{X_2},\partial_\xi f_{X_2}\rangle+\langle (\mathsf{P}_0-z)\partial_x f_{X_2},\partial_\xi f_{X_2}\rangle\\
+\langle(\mathsf{P}_0-z)f_{X_2},\partial_x\partial_\xi f_{X_2}\rangle\,,
\end{multline*}
and
\begin{multline*}
	0=\partial_\xi\langle(\mathsf{P}_0-z)f_{X_2},\partial_x f_{X_2}\rangle=\langle(\partial_\xi \mathsf{P}_0)f_{X_2},\partial_x f_{X_2}\rangle+\langle (\mathsf{P}_0-z)\partial_\xi f_{X_2},\partial_x f_{X_2}\rangle\\
	+\langle(\mathsf{P}_0-z)f_{X_2},\partial_x\partial_\xi f_{X_2}\rangle\,,
\end{multline*}
Thus,
\begin{equation*}
	\left(\{\mathscr{Q}_0,\mathscr{P}_0\}\mathscr{Q}_0\right)_{\pm}=-\langle(\mathsf{P}_0-\mu)\partial_x f,\partial_\xi f\rangle+\langle(\mathsf{P}_0-\mu)\partial_\xi f,\partial_x f\rangle=0\,,
	\end{equation*}
where we used that $\mathsf{P}_0-\mu$ is selfadjoint.

Therefore,
\[\mathscr{Q}_{2,\pm}=-\langle \mathsf{P}_2(X_2)f_{X_2},f_{X_2}\rangle+\langle\mathsf{P}_1(X_2)(\mathsf{P}_0-z)^{-1}\Pi^{\perp}\mathsf{P}_1(X_2) f_{X_2},f_{X_2}\rangle\,.\]
\end{proof}

\begin{remark}
The expression of the \enquote{Schur complement} $\mathscr{Q}^{[2]}_{h,\pm}$ has already appeared in previous works (see \cite[Proposition 3.1.10]{Keraval}). Note however that the assumption of \cite[Hypothèse 3.1.9]{Keraval} (\emph{i.e.}, the principal operator symbol does not depend on $\xi$) is not satisfied in our context. It is also important to notice that our $h$-pseudo-differential operator is expanded according to the powers of $\hbar=h^{\frac12}$ and not $h$. This avoids the nasty Poisson brackets computations of \cite[Lemme 3.1.11]{Keraval}.
\end{remark}
The following lemma shows that the apparent subprincipal symbol of $\mathscr{Q}^{[2]}_{h,\pm}$ actually vanishes (modulo  $\mathscr{O}(h^\infty)$). More precisely:
\begin{lemma}\label{lem.P1.flat}
We have, in the symbol class $S(1)$,
\[\langle \mathsf{P}_1(X_2)f_{X_2}, f_{X_2}\rangle=\mathscr{O}(h^\infty)\,.\]	
\end{lemma}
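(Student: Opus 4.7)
The plan is to exploit a parity argument in the variable $X_1=(x_1,\xi_1)$. Inspecting the formula for $p_1$ established in the previous subsection, I observe that $p_1=\chi_\delta\, q$, where
\[q(X_1,X_2)=2\xi_1^2\rond B\nabla\rond B\cdot X_1+2\xi_1(x_1+\rond\alpha\xi_1)\nabla\rond\alpha\cdot X_1+\nabla\rond V\cdot X_1\]
is, for each fixed $X_2$, a polynomial in $X_1$ whose monomials all have odd total degree (here $1$ or $3$), with coefficients smooth in $X_2$ and belonging to $S_{\R^2}(1)$. Writing $\chi_\delta = 1 - (1-\chi_\delta)$, I would split
\[\langle \mathsf{P}_1(X_2) f_{X_2}, f_{X_2}\rangle=\langle \Op_1^{w,1}(q)f_{X_2},f_{X_2}\rangle-\langle \Op_1^{w,1}((1-\chi_\delta)q)f_{X_2},f_{X_2}\rangle,\]
and show that the first term vanishes identically by parity and that the second is a $\grandO(h^\infty)$ remainder.

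The parity step goes as follows. Let $\Pi\psi(x_1)=\psi(-x_1)$ denote the parity operator on $L^2(\R)$. A short change of variables in the Weyl integral gives $\Pi\,\Op_1^{w,1}(a)\,\Pi=\Op_1^{w,1}(a\circ\sigma)$ with $\sigma(X_1)=-X_1$, so since $q$ is odd in $X_1$ we obtain $\Op_1^{w,1}(q) = -\Pi\,\Op_1^{w,1}(q)\,\Pi$. The explicit form of $f_{X_2}$ shows that it is a Gaussian in $x_1$ centered at the origin, whence $\Pi f_{X_2}=f_{X_2}$, and we conclude
\[\langle \Op_1^{w,1}(q)f_{X_2},f_{X_2}\rangle=-\langle \Op_1^{w,1}(q)\,\Pi f_{X_2},\Pi f_{X_2}\rangle=-\langle \Op_1^{w,1}(q)f_{X_2},f_{X_2}\rangle=0.\]

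For the cutoff remainder I would pass to the Wigner representation
\[\langle \Op_1^{w,1}(a)f_{X_2},f_{X_2}\rangle=\int_{\R^2}a(X_1)\,W_{f_{X_2}}(X_1)\,\dd X_1.\]
A direct computation shows that $W_{f_{X_2}}$ is an explicit Gaussian in $X_1$ centered at the origin, whose covariance is a smooth function of $(\rond B,\rond\alpha)(X_2)$. Assumption \ref{Assumption.B.positif} together with $\rond B,\rond\alpha\in S_{\R^2}(1)$ provide a uniform Gaussian decay $|W_{f_{X_2}}(X_1)|\leq C e^{-c|X_1|^2}$, and since $1-\chi_\delta$ is supported in $\{|X_1|\geq c'h^{\delta-1/2}\}$ while $q$ has polynomial growth in $X_1$, the desired $\grandO(h^\infty)$ bound follows at once.

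The main technical point that remains is to upgrade these pointwise estimates to the topology of $S_{\R^2}(1)$, i.e.\ to all $X_2$-derivatives. The parity step is insensitive to this: for each multi-index $\gamma$, $\partial_{X_2}^\gamma q$ is still an odd polynomial in $X_1$, so the bulk term vanishes pointwise in $X_2$. For the remainder, differentiating $q$ and $W_{f_{X_2}}$ in $X_2$ only produces extra polynomial factors in $X_1$, which are absorbed by the uniform Gaussian decay of $W_{f_{X_2}}$ guaranteed by $\rond B\geq b_0$. I expect this uniform Gaussian-decay bookkeeping to be the only mildly delicate part of the argument.
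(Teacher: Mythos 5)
Your argument is correct and follows essentially the same route as the paper: the bulk term vanishes by parity (each monomial of the untruncated symbol is odd in $X_1$, while $f_{X_2}$ is even), and the cutoff correction is $\grandO(h^\infty)$ by the uniform Gaussian decay of $f_{X_2}$; you simply spell out the parity step via the involution $\Pi$ and the remainder via the Wigner transform, while the paper leaves these details implicit.
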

\begin{proof}
This follows from the fact that $\mathsf{P}_1(X_2)=\Op^{w,1}_1 (\chi_{\delta} \tilde p_1)$ where
\[\tilde p_1= 2\xi^2_1\rond B\nabla\rond B\cdot X_1+2\xi_1(x_1+\rond\alpha\xi_1)\nabla\rond\alpha\cdot X_1+\nabla V\cdot X_1 \,.\]
Indeed, $ \tilde p_1$ is a homogeneous function of $X_1$ of odd order, and $f_{X_2}$ is an even function of $x_1$ so that, for all $X_2\in\R^2$,
\[\langle\Op^{w,1}_1\tilde p_1 f_{X_2},f_{X_2}\rangle=0\,.\]
From this, we see that the term $\chi_\delta$ will only contribute to $\mathscr{O}(h^\infty)$ due to the expo
nential decay of $f_{X_2}$. The same argument applies to the derivatives with respect to $X_2$.
\end{proof}

\begin{corollary}\label{cor.E}
For $h$ small enough, $\mathrm{Id}+\mathscr{R}_{h,z}$ and $\mathrm{Id}+\widetilde{\mathscr{R}}_{h,z}$ are bijective. If we let
\[
\mathscr{E}_h=(\mathrm{Id}+\mathscr{R}_{h,z})^{-1}\Op^{w,2}_h\mathscr{Q}^{[2]}_h\,,\quad
\widetilde{\mathscr{E}}_h=\Op^{w,2}_h\mathscr{Q}^{[2]}_h(\mathrm{Id}+\widetilde{\mathscr{R}}_{h,z})^{-1}\,,
\]	
we have
\[\mathscr{E}_h\cdot\Op^{w,2}_h\mathscr{P}_h=\mathrm{Id}\,,\quad \Op^{w,2}_h\mathscr{P}_h\cdot\widetilde{\mathscr{E}}_h=\mathrm{Id}\,,\quad \widetilde{\mathscr{E}}_h=\mathscr{E}_h\,.\]
\end{corollary}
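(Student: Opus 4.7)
The plan is to invert the remainders by a Neumann-series argument and then deduce the two-sided inverse from standard algebraic manipulations; the statement of the corollary is essentially an immediate consequence of Proposition~\ref{prop.parametrix} once one observes that the remainders are small in operator norm.

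First, I would invoke the bound $\mathscr{R}_{h,z} = \mathscr{O}(h^{3\delta})$ (and likewise for $\widetilde{\mathscr{R}}_{h,z}$) as bounded operators on $L^2(\R^2) \times L^2(\R^2)$, which is provided by Proposition~\ref{prop.parametrix}. Since $3\delta > 0$, for $h$ small enough we have $\|\mathscr{R}_{h,z}\| < \tfrac12$, hence $\mathrm{Id} + \mathscr{R}_{h,z}$ is bijective with bounded inverse given by the convergent Neumann series $\sum_{k \geq 0}(-\mathscr{R}_{h,z})^k$; the same argument applies to $\mathrm{Id} + \widetilde{\mathscr{R}}_{h,z}$. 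As $\mathscr{R}_{h,z}$ depends analytically on $z$ (a fact recorded in Proposition~\ref{prop.parametrix}), so does its inverse, a point worth noting for later use.

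Next, from the two identities of Proposition~\ref{prop.parametrix}, left-multiplying the first by $(\mathrm{Id}+\mathscr{R}_{h,z})^{-1}$ gives
\[
(\mathrm{Id}+\mathscr{R}_{h,z})^{-1} \Op^{w,2}_h\mathscr{Q}^{[2]}_h \cdot \Op^{w,2}_h\mathscr{P}_h = \mathrm{Id},
\]
so $\mathscr{E}_h$ is a left inverse of $\Op^{w,2}_h\mathscr{P}_h$. Right-multiplying the second identity by $(\mathrm{Id}+\widetilde{\mathscr{R}}_{h,z})^{-1}$ similarly yields
\[
\Op^{w,2}_h\mathscr{P}_h \cdot \Op^{w,2}_h\mathscr{Q}^{[2]}_h (\mathrm{Id}+\widetilde{\mathscr{R}}_{h,z})^{-1} = \mathrm{Id},
\]
so $\widetilde{\mathscr{E}}_h$ is a right inverse.

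Finally, the equality $\mathscr{E}_h = \widetilde{\mathscr{E}}_h$ is the standard fact that a left inverse and a right inverse of the same bounded operator must coincide:
\[
\mathscr{E}_h = \mathscr{E}_h \cdot \bigl(\Op^{w,2}_h\mathscr{P}_h \cdot \widetilde{\mathscr{E}}_h\bigr) = \bigl(\mathscr{E}_h \cdot \Op^{w,2}_h\mathscr{P}_h\bigr) \cdot \widetilde{\mathscr{E}}_h = \widetilde{\mathscr{E}}_h.
\]
I do not anticipate any real obstacle: the only non-cosmetic ingredient is the norm bound on the remainders, and that has already been extracted in Proposition~\ref{prop.parametrix}; everything else is formal.
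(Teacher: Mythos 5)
Your proof is correct and follows exactly the route the paper (implicitly) takes: the paper presents Corollary~\ref{cor.E} without an explicit proof, treating it as an immediate consequence of Proposition~\ref{prop.parametrix} via a Neumann series to invert $\mathrm{Id}+\mathscr{R}_{h,z}$ and $\mathrm{Id}+\widetilde{\mathscr{R}}_{h,z}$, followed by the standard left-inverse-equals-right-inverse argument. One very minor technical remark: $\mathscr{R}_{h,z}$ and $\widetilde{\mathscr{R}}_{h,z}$ act on slightly different spaces (the former on the domain side, weighted by $B^2(\R)\times\C$, the latter on $L^2(\R^2)\times L^2(\R)$ rather than $L^2(\R^2)\times L^2(\R^2)$), but this does not affect the validity of the argument.
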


\subsection{From $\widehat{\mathscr{L}}_h$ to $\mathscr{E}_{h,\pm}$}

According to Corollary \ref{cor.E}, the operator
$\mathscr{E}_{h} = \mathscr{E}_{h} (z)$ is the inverse of
$\Op_h^{w,2} \mathscr{P}_h$. We can write it in the matrix form
\[ \mathscr{E}_h = \begin{pmatrix}
\mathscr{E}_{h,++} & \mathscr{E}_{h,+} \\
\mathscr{E}_{h,-} & \mathscr{E}_{h,\pm} 
\end{pmatrix} \,.\]
Then we have the following classical observation (see \cite{SZ07}, for instance, for a review on Grushin methods).

\begin{lemma}\label{lem.E+-}
For $z \in D(\mu_0, Ch)$ we have, for $h$ small enough,
\[z\in\mathrm{sp}(\widehat{\mathscr{L}}_h)\Longleftrightarrow 0\in\mathrm{sp}(\mathscr{E}_{h,\pm}) \,.\]	
Moreover, when $z\notin\mathrm{sp}(\widehat{\mathscr{L}}_h)$, the following formulas hold
\begin{equation}
\mathscr{E}_{h,\pm}^{-1}=-\mathfrak{P}(\widehat{\mathscr{L}}_h-z)^{-1}\mathfrak{P}^*\,,
\end{equation}
and
\begin{equation}\label{eq.resE+-}
(\widehat{\mathscr{L}}_h-z)^{-1}=\mathscr{E}_{h,++}-\mathscr{E}_{h,+}\mathscr{E}^{-1}_{h,\pm}\mathscr{E}_{h,-}\,.
\end{equation}
\end{lemma}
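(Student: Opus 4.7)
The plan is to extract the eight block identities coming from the two facts $\Op^{w,2}_h\mathscr{P}_h \cdot \mathscr{E}_h=\mathrm{Id}$ and $\mathscr{E}_h \cdot \Op^{w,2}_h\mathscr{P}_h=\mathrm{Id}$ (Corollary \ref{cor.E}), and then run the standard Schur-complement manipulation to go from $\Op^{w,2}_h\mathscr{P}_h$ to $\widehat{\mathscr{L}}_h-z$. Writing out these products with
\[\Op^{w,2}_h\mathscr{P}_h=\begin{pmatrix}\widehat{\mathscr{L}}_h-z&\mathfrak{P}^*\\ \mathfrak{P}&0\end{pmatrix}\,,\quad \mathscr{E}_h=\begin{pmatrix}\mathscr{E}_{h,++}&\mathscr{E}_{h,+}\\ \mathscr{E}_{h,-}&\mathscr{E}_{h,\pm}\end{pmatrix}\,,\]
yields in particular the key four identities
\[(\widehat{\mathscr{L}}_h-z)\mathscr{E}_{h,+}+\mathfrak{P}^*\mathscr{E}_{h,\pm}=0\,,\quad \mathfrak{P}\mathscr{E}_{h,+}=\mathrm{Id}\,,\quad \mathscr{E}_{h,-}(\widehat{\mathscr{L}}_h-z)+\mathscr{E}_{h,\pm}\mathfrak{P}=0\,,\quad \mathscr{E}_{h,-}\mathfrak{P}^*=\mathrm{Id}\,,\]
together with $(\widehat{\mathscr{L}}_h-z)\mathscr{E}_{h,++}+\mathfrak{P}^*\mathscr{E}_{h,-}=\mathrm{Id}$ and $\mathscr{E}_{h,++}(\widehat{\mathscr{L}}_h-z)+\mathscr{E}_{h,+}\mathfrak{P}=\mathrm{Id}$, which furnish everything needed.

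For the implication $z\notin\spectrum(\widehat{\mathscr{L}}_h)\Rightarrow 0\notin\spectrum(\mathscr{E}_{h,\pm})$, I would solve the relations for $\mathscr{E}_{h,+}$ and $\mathscr{E}_{h,-}$ in terms of the resolvent: $\mathscr{E}_{h,+}=-(\widehat{\mathscr{L}}_h-z)^{-1}\mathfrak{P}^*\mathscr{E}_{h,\pm}$ and $\mathscr{E}_{h,-}=-\mathscr{E}_{h,\pm}\mathfrak{P}(\widehat{\mathscr{L}}_h-z)^{-1}$. Substituting these into $\mathfrak{P}\mathscr{E}_{h,+}=\mathrm{Id}$ and $\mathscr{E}_{h,-}\mathfrak{P}^*=\mathrm{Id}$ gives respectively $\bigl(-\mathfrak{P}(\widehat{\mathscr{L}}_h-z)^{-1}\mathfrak{P}^*\bigr)\mathscr{E}_{h,\pm}=\mathrm{Id}$ and $\mathscr{E}_{h,\pm}\bigl(-\mathfrak{P}(\widehat{\mathscr{L}}_h-z)^{-1}\mathfrak{P}^*\bigr)=\mathrm{Id}$, which establishes the first resolvent formula.

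Conversely, assuming $\mathscr{E}_{h,\pm}$ is invertible, I would set $R_z:=\mathscr{E}_{h,++}-\mathscr{E}_{h,+}\mathscr{E}_{h,\pm}^{-1}\mathscr{E}_{h,-}$ and verify directly that $(\widehat{\mathscr{L}}_h-z)R_z=\mathrm{Id}$ and $R_z(\widehat{\mathscr{L}}_h-z)=\mathrm{Id}$. For the first, one computes
\[(\widehat{\mathscr{L}}_h-z)\mathscr{E}_{h,++}-(\widehat{\mathscr{L}}_h-z)\mathscr{E}_{h,+}\mathscr{E}_{h,\pm}^{-1}\mathscr{E}_{h,-}=\mathrm{Id}-\mathfrak{P}^*\mathscr{E}_{h,-}+\mathfrak{P}^*\mathscr{E}_{h,\pm}\mathscr{E}_{h,\pm}^{-1}\mathscr{E}_{h,-}=\mathrm{Id}\,,\]
using the block identities, and the symmetric computation works on the other side. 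This simultaneously gives the equivalence and the formula \eqref{eq.resE+-}.

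There is no real obstacle: the argument is purely algebraic manipulation of the block identities and requires neither the pseudodifferential structure nor the specific form of $\mathfrak{P}$. The only mild care needed is to observe that the compositions involved make sense on the correct function spaces (bearing in mind $\mathfrak{P}:L^2(\R^2)\to L^2(\R)$, $\mathfrak{P}^*:L^2(\R)\to L^2(\R^2)$, and $\widehat{\mathscr{L}}_h$ is closed on its natural domain), so that bijectivity of $\widehat{\mathscr{L}}_h-z$ is indeed the relevant notion of $z\notin\spectrum(\widehat{\mathscr{L}}_h)$ — but this follows from Lemma \ref{lem.ferme}, which ensures closedness and compactness of the resolvent when it exists.
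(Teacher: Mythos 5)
Your proof is correct and follows essentially the same route as the paper's: extracting the block identities from $\mathscr{E}_h \cdot \Op^{w,2}_h\mathscr{P}_h = \mathrm{Id}$ and $\Op^{w,2}_h\mathscr{P}_h \cdot \mathscr{E}_h = \mathrm{Id}$ (Corollary \ref{cor.E} together with \eqref{eq.41}), then running the standard Schur-complement manipulation in both directions. The only cosmetic differences are that you spell out the derivation of $\mathscr{E}_{h,\pm}^{-1} = -\mathfrak{P}(\widehat{\mathscr{L}}_h-z)^{-1}\mathfrak{P}^*$ explicitly (the paper leaves this implicit), and that the closedness you invoke at the end is really a property of $\widehat{\mathscr{L}}_h$ rather than of the symbol $\mathsf{P}_h(X_2)$ treated in Lemma~\ref{lem.ferme} — a harmless imprecision since it plays no role in the algebraic argument.
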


\begin{proof}
From Corollary \ref{cor.E}, and in view of \eqref{eq.41}, we have
\begin{equation}\label{eq.Gru-a}
(\widehat{\mathscr{L}}_h-z){\mathscr{E}}_{h,+}+\mathfrak{P}^*{\mathscr{E}}_{h,\pm}=0\,,\quad \mathfrak{P}{\mathscr{E}}_{h,+}=\mathrm{Id}\,,
\end{equation}
and
\begin{equation}\label{eq.Gru-b}
\mathscr{E}_{h,-}(\widehat{\mathscr{L}}_h-z)+\mathscr{E}_{h,\pm}\mathfrak{P}=0\,,\quad \mathscr{E}_{h,-}\mathfrak{P}^*=\mathrm{Id}\,.
\end{equation}
By using \eqref{eq.Gru-a} and \eqref{eq.Gru-b}, we see that when $\widehat{\mathscr{L}}_h-z$ is bijective, so is $\mathscr{E}_{h,\pm}$. Then, assume that $\mathscr{E}_{h,\pm}$ is bijective and  consider also
\[\mathscr{E}_{h,++}(\widehat{\mathscr{L}}_h-z)+\mathscr{E}_{h,+}\mathfrak{P}=\mathrm{Id}\,.\]
With \eqref{eq.Gru-b}, we get
\[\left(\mathscr{E}_{h,++}-\mathscr{E}_{h,+}\mathscr{E}^{-1}_{h,\pm}\mathscr{E}_{h,-}\right)(\widehat{\mathscr{L}}_h-z)=\mathrm{Id}\,.\]
Using \eqref{eq.Gru-a} and also
\[(\widehat{\mathscr{L}}_h-z)\mathscr{E}_{h,++}+\mathfrak{P}^*\mathscr{E}_{h,-}=\mathrm{Id}\,,\]
we get
\[(\widehat{\mathscr{L}}_h-z)\left(\mathscr{E}_{h,++}-\mathscr{E}_{h,+}\mathscr{E}^{-1}_{h,\pm}\mathscr{E}_{h,-}\right)=\mathrm{Id}\,.\]
\end{proof}

\section{Spectral reduction}\label{sec.5.Reduction}

In Lemma \ref{lem.E+-} we proved that the spectrum of $\widehat{\mathscr{L}}_{h}$ in $D(\mu_0,Ch)$ is given by those $z$ such that $\mathscr{E}_{h,\pm}(z)$ is not bijective. Moreover, according to Corollary \ref{cor.E} and Proposition \ref{prop.parametrix}, $\mathscr{E}_{h,\pm}(z)=\Op_h^{w,2}\mathscr{Q}^{[2]}_{h,\pm}+\mathscr{O}(h^{3\delta})$, and hence
\begin{equation}\label{eq.mu-eff0}
\mathscr{E}_{h,\pm}(z) = \Op_h^{w} \left( z - \mu^{\mathsf{eff}}_h(X_2) + \grandO(h^{3\delta}) \right) \,. 
\end{equation}
with the effective symbol, belonging to $S_{\R^2_{X_2}}(1)$, given by
\begin{multline}\label{eq.mu-eff}
\mu^{\mathsf{eff}}_h(X_2) = \rond B (X_2) + \rond{V}(X_2) \\
+ h \left( \langle \mathsf{P}_2(X_2) f_{X_2}, f_{X_2} \rangle - \langle \mathsf{P}_1(X_2) (\mathsf{P}_0(X_2) - z)^{-1} \Pi^{\perp} \mathsf{P}_1(X_2) f_{X_2}, f_{X_2} \rangle \right) \,.
\end{multline}

Indeed, the $h^{1/2}$-order term appears to be small by Lemma \ref{lem.P1.flat}. We recall that $\mathsf{P}_j$ was defined in \eqref{eq.Pj}, $f_{X_2}$ is the first eigenfunction of the harmonic oscillator $\mathsf{P}_0(X_2)$ and $\Pi^{\perp}$ is the orthogonal projection onto $f_{X_2}^{\perp}$ (see Lemma \ref{lem.bijpiperp}). We denote $\Peff = \Op_h^w \mu^{\mathsf{eff}}_h$. The aim of this section is to prove that the spectrum of $\widehat{\mathscr{L}}_h$ is given by the spectrum of $\Peff$ up to a small error.

\subsection{The spectrum of $\widehat{\mathscr{L}}_h$ is discrete}

\begin{proposition}\label{prop.fred0}
The following families are analytic families of Fredholm operators of index $0$:
$$\begin{matrix}
\left(z-\Op^{w}_h(\rond B+\rond V)\right)_{z\in D(\mu_0,Ch)} &
\left(z-\Op^{w}_h(\mu^{\mathrm {eff}}_h)\right)_{z\in D(\mu_0,Ch)}\\
\left(\mathscr{E}_{h,\pm}(z)\right)_{z\in D(\mu_0,Ch)}&
\left(\widehat{\mathscr{L}}_h-z\right)_{z\in D(\mu_0,Ch)}
\end{matrix}$$
\end{proposition}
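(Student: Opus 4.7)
The approach is: establish Fredholm of index $0$ for family 1 via ellipticity at infinity and homotopy; propagate to families 2 and 3 by small perturbation arguments; and transfer the result from family 3 to family 4 via the Grushin/Schur correspondence provided by Corollary \ref{cor.E}. Analyticity in $z$ and boundedness are straightforward: families 1 and 4 depend affinely on $z$; for family 2, $\mu^{\mathrm{eff}}_h$ depends analytically on $z$ through $(\mathsf{P}_0(X_2)-z)^{-1}\Pi^{\perp}$ by Lemma \ref{lem.bijpiperp}; for family 3, $\mathscr{E}_{h,\pm}$ inherits analyticity from the construction in Corollary \ref{cor.E}. All symbols lie in $S(1)$, so Calderón--Vaillancourt gives boundedness of families 1, 2, 3 on $L^2(\R)$, while family 4 is closed on $L^2(\R^2)$.

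For family 1, Assumption \ref{Assumption} provides $u,v,c>0$ and a compact $K\subset\R^2$ such that $\Re\bigl((u-iv)(\rond B+\rond V-\mu_0)\bigr)\geq c$ outside $K$. For $z\in D(\mu_0,Ch)$ and $h$ small enough, the symbol $\rond B+\rond V-z$ is elliptic at infinity: $\Re\bigl((u-iv)(\rond B+\rond V-z)\bigr)\geq c/2$ outside $K$. Fix $w_0\in\C$ with $\Re((u-iv)w_0)=1$ and consider the homotopy $a_t:=(1-t)(\rond B+\rond V-z)+tw_0\in S(1)$. Then $a_t$ remains elliptic at infinity uniformly in $t\in[0,1]$, so a standard parametrix construction (using that Weyl quantizations of compactly supported $S(1)$-symbols are compact) makes $\Op_h^w(a_t)$ Fredholm for every $t$. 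At $t=1$, $\Op_h^w(a_1)=w_0\,\mathrm{Id}$ is invertible, so homotopy invariance of the Fredholm index gives index $0$ for family 1.

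For families 2 and 3, Lemma \ref{lem.P1.flat} together with \eqref{eq.mu-eff} gives $\mu^{\mathrm{eff}}_h-(\rond B+\rond V)=\mathscr{O}(h)$ in $S(1)$, uniformly in $z\in D(\mu_0,Ch)$, hence $\|\Op_h^w(\mu^{\mathrm{eff}}_h)-\Op_h^w(\rond B+\rond V)\|=\mathscr{O}(h)$ by Calderón--Vaillancourt. By \eqref{eq.mu-eff0}, also $\mathscr{E}_{h,\pm}(z)-\Op_h^w(z-\mu^{\mathrm{eff}}_h)=\mathscr{O}(h^{3\delta})$ in operator norm. The Fredholm property and the index being stable under small bounded perturbations, these estimates promote the conclusion for family 1 to families 2 and 3 for $h$ small enough.

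For family 4, write $A:=\widehat{\mathscr{L}}_h-z$. The invertibility of $\Op_h^{w,2}\mathscr{P}_h$ from Corollary \ref{cor.E} yields the block identities
\[
A\mathscr{E}_{h,+}=-\mathfrak{P}^*\mathscr{E}_{h,\pm},\quad \mathfrak{P}\mathscr{E}_{h,+}=\mathrm{Id},\quad \mathscr{E}_{h,++}A+\mathscr{E}_{h,+}\mathfrak{P}=\mathrm{Id},\quad \mathscr{E}_{h,\pm}\mathfrak{P}=-\mathscr{E}_{h,-}A,
\]
together with their duals $\mathscr{E}_{h,-}\mathfrak{P}^*=\mathrm{Id}$, $\mathscr{E}_{h,++}\mathfrak{P}^*=0$, $A\mathscr{E}_{h,++}+\mathfrak{P}^*\mathscr{E}_{h,-}=\mathrm{Id}$. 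From the first group, $\mathfrak{P}$ sends $\ker A$ into $\ker\mathscr{E}_{h,\pm}$, $\mathscr{E}_{h,+}$ sends $\ker\mathscr{E}_{h,\pm}$ into $\ker A$, and the compositions are the identity, so these maps are mutually inverse isomorphisms; a symmetric argument using $\mathfrak{P}^*$ and $\mathscr{E}_{h,-}$ produces an isomorphism $\mathrm{coker}\,A\cong\mathrm{coker}\,\mathscr{E}_{h,\pm}$. Hence $A$ is Fredholm with $\mathrm{index}(A)=\mathrm{index}(\mathscr{E}_{h,\pm})=0$. The main subtle point is precisely this Grushin index correspondence: since the auxiliary space $L^2(\R)$ is infinite-dimensional, one cannot simply invoke the classical finite-dimensional Schur argument, but the explicit isomorphisms derived from the block identities make the verification direct.
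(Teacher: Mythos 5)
Your proof is correct and reaches the same conclusion as the paper, but with two notable differences in method. For the first family, the paper does not use a homotopy: instead it adds a compactly supported phase-space cutoff $\chi$ to the symbol so that $P:=\Op_h^w(\rond B+\rond V)+(u-iv)^{-1}\Op_h^w(\chi)$ satisfies a G\aa rding-type lower bound $\vert\langle (u-iv)(P-\mu_0)\psi,\psi\rangle\vert\geq (\gamma-\min F)\Vert\psi\Vert^2$, making $P-\mu_0$ explicitly invertible (and the argument repeats for the adjoint). Since $\Op_h^w(\chi)$ is compact, Fredholmness and index $0$ of $\Op_h^w(\rond B+\rond V)-\mu_0$ follow at once. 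Your homotopy $a_t=(1-t)(\rond B+\rond V-z)+tw_0$, combined with uniform ellipticity at infinity and homotopy-invariance of the index, is an equally valid route; the paper's version has the side benefit that the invertibility of $P-\mu_0$ is reused almost verbatim in the proof of Proposition \ref{prop.Lh.Fredholm}, whereas your homotopy is more self-contained and avoids having to identify a good compact perturbation. Your treatment of families 2 and 3 coincides with the paper. For family 4, the paper simply asserts that Fredholmness of the Schur complement transfers to $\widehat{\mathscr{L}}_h-z$ through Corollary \ref{cor.E}; you spell out the Grushin isomorphisms $\ker A\cong\ker\mathscr{E}_{h,\pm}$ and (algebraically) $\mathrm{coker}\,A\cong\mathrm{coker}\,\mathscr{E}_{h,\pm}$, which is a welcome expansion. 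One small point worth making explicit in your cokernel step: what the block identities give you directly is that the \emph{algebraic} cokernels are isomorphic, and one then invokes the standard Banach-space fact that a (closed, densely defined) operator with finite-dimensional kernel whose range has finite algebraic codimension automatically has closed range; only then is $A$ Fredholm in the usual sense. This is standard but deserves a sentence, since you flag it as "the main subtle point."
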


\begin{proof}
Let us consider the family $\left(z-\Op^{w}_h(\rond B+\rond V)\right))_{z\in D(\mu_0,Ch)}$. This is an analytic family of bounded operators. By perturbation, it is enough to prove that $\Op^{w}_h(\rond B+\rond V)-\mu_0$ is a Fredholm operator with index $0$ (since the set of Fredholm operators of index $0$ is open). Let $u >0$ and $v \in \R$ be given by Assumption \ref{Assumption}. The function $F = u(\rond B + \Re \rond V) + v \mathsf{Im} \rond V$ admits a global minimum, and there exist a compact $K$ and a constant $\gamma > \min F$ such that,
\[ \forall X_2 \in \R^2 \setminus K, \quad F(X_2) \geq \gamma \,. \]
Thus we may consider a smooth cutoff function $\chi$ supported in a neighborhood of $K$ such that 
\[ F+\chi \geq \gamma > \min F \,.\]
Define
\[ P = \Op_h^w ( \rond B + \rond V) + (u-iv)^{-1} \chi \,. \]
Proving that $P-\mu_0$ is invertible is enough to conclude that $\Op_h^w ( \rond B + \rond V) - \mu_0$ is a Fredholm operator with index $0$. Let $w= (u-iv) \mu_0$ and $Q=(u-iv)P$, so that
\begin{align*}
(u-iv)(P-\mu_0) &= Q - w\\
&=\Op_h^w \left( u \Re (\rond B +  \rond V - \mu_0) + v \mathsf{Im} ( \rond V  -  \mu_0) + \chi \right)\\ & \quad + i \ \Op_h^w \left( u \mathsf{Im}( \rond V - \mu_0 ) - v \Re( \rond V + \rond B - \mu_0) \right) \,.
\end{align*}
Each parenthesis being selfadjoint, we deduce for all $\psi \in \Ld(\R)$ that
\begin{align*}
\vert \langle (Q-w) \psi, \psi \rangle \vert &\geq \Re \langle (Q-w) \psi, \psi \rangle \\
& \geq \left\langle \Op_h^w \big{(} u \Re (\rond B +  \rond V - \mu_0) + v \mathsf{Im} ( \rond V  -  \mu_0) + \chi \big{)} \psi , \psi \right\rangle \,.
\end{align*}
Using the G\aa rding inequality, and with $\min F = u \Re \mu_0 + v \mathsf{Im} \mu_0$, we get
\[ \vert \langle (Q-w) \psi, \psi \rangle \vert \geq (\gamma - \min F) \Vert \psi \Vert^2 \,. \]
Hence $Q-w$ is one-to-one with closed range. We can apply the same arguments for the adjoint of $Q-w$. We deduce that $Q-w$ is bijective, and so is $P- \mu_0$.\\

By \eqref{eq.mu-eff0} and \eqref{eq.mu-eff}, we have, for $z\in D(\mu_0,Ch)$, $$\mathscr{E}_{h,\pm}(z)=\Op_h^w \left( z-\rond B - \rond V \right)+\mathscr{O}(h).$$ Thus, $\mathscr{E}_{h,\pm}(z)$ is Fredholm of index $0$, as soon as $h$ is small enough (and by construction it is analytic with respect to $z$). The same perturbation argument hold for $z- \Op_h^w \mu_h^{\mathsf{eff}}$. Using again Corollary \ref{cor.E}, this implies that $\widehat{\mathscr{L}}_h-z$ is also a Fredholm operator of index $0$ (the Fredholmness of the Schur complement $\mathscr{E}_{h,\pm}$ is equivalent to that of $\widehat{\mathscr{L}}_h-z$).
\end{proof}

Proposition \ref{prop.fred0} is not enough to establish that the spectrum of $\widehat{\mathscr{L}}_h$ is discrete in $D(\mu_0,Ch)$ : We have to check that the resolvent set intersects $D(\mu_0,Ch)$.

Thanks to the assumptions on $\Peff$, we can draw in the resolvent set
of $\Peff$ the circle $\Gamma_{j,h}$ of radius
$h^{\frac{3}{2}-\kappa}$ and center $\nu_j(h)$ for $h$ small enough.

\begin{lemma}\label{lem.resolvent}
  Let us denote by $D_{j,h}$ the open disc of center $\nu_j(h)$ and
  radius $h^{\frac32-\kappa}$ and let
  \[
    \mathcal{R}_h := D(\mu_0,Ch)\setminus \bigcup_{j\in\{1,\ldots,
      N\}}D_{j,h}\,.
  \]
  There exists $h_0>0$ such that, for all $h\in (0,h_0)$, we have
  \[
    \mathcal{R}_h \subset\rho(\widehat{\mathscr{L}}_h)\,.
  \]
\end{lemma}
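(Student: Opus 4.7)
The approach is to reduce the invertibility of $\widehat{\mathscr{L}}_h - z$ to that of the Grushin Schur complement $\mathscr{E}_{h,\pm}(z)$ via Lemma \ref{lem.E+-}, and then treat $\mathscr{E}_{h,\pm}(z)$ as a small perturbation of $z - \Peff$, controlled by the resolvent hypothesis \eqref{eq.Res-bound} of Theorem \ref{Main-Thm}. By Proposition \ref{prop.fred0} the family is already Fredholm of index $0$, so only injectivity (or invertibility) has to be produced.

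First, I would combine \eqref{eq.mu-eff0}--\eqref{eq.mu-eff} with the Calder\'on--Vaillancourt theorem to rewrite
\[
  \mathscr{E}_{h,\pm}(z) = (z - \Peff) + \mathcal{R}^\sharp_h(z),
  \qquad
  \|\mathcal{R}^\sharp_h(z)\|_{\mathcal{L}(L^2(\R))} = \grandO(h^{3\delta}),
\]
uniformly in $z\in D(\mu_0,Ch)$, the exponent $3\delta$ being inherited from the parametrix error in Proposition \ref{prop.parametrix}. The dependence in $z$ is analytic, via Corollary \ref{cor.E}.

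Next, for $z \in \mathcal{R}_h$ I would check that $\dist(z, \spectrum \Peff) \geq h^{3/2 - \kappa}$. By hypothesis \eqref{thm.a}, the spectrum of $\Peff$ inside the slightly larger disc $D(\mu_0, C'h)$ consists exactly of $\nu_1(h), \ldots, \nu_N(h)$, from each of which $z$ is at distance at least $h^{3/2 - \kappa}$ by the very definition of $\mathcal{R}_h$; the remaining spectrum is at distance at least $(C' - C)h \gg h^{3/2 - \kappa}$ from $D(\mu_0, Ch)$. Hypothesis (b) of Theorem \ref{Main-Thm} then yields
\[
  \|(z - \Peff)^{-1}\| \leq \frac{C_0}{h^{3/2 - \kappa}}.
\]

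Finally, I would factor $\mathscr{E}_{h,\pm}(z) = (z - \Peff)\bigl(I + (z - \Peff)^{-1}\mathcal{R}^\sharp_h(z)\bigr)$. The perturbation has operator norm $\grandO(h^{3\delta - 3/2 + \kappa})$, which tends to $0$ provided $\delta$ is chosen with $\delta > \tfrac{1}{2} - \tfrac{\kappa}{3}$; this is compatible with the prior constraint $\delta \in \bigl(\tfrac{1}{3}, \tfrac{1}{2}\bigr)$ because $\kappa \in \bigl(0, \tfrac{1}{2}\bigr)$. A Neumann series then shows $\mathscr{E}_{h,\pm}(z)$ is invertible for $h$ small enough, and Lemma \ref{lem.E+-} delivers $z \in \rho(\widehat{\mathscr{L}}_h)$.

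The main delicate point is this balancing act between the parametrix error and the resolvent blow-up: the exponent $3\delta$ coming from the operator-valued symbolic calculus must exceed $3/2 - \kappa$, which forces a careful choice of $\delta$ depending on $\kappa$ (and retroactively justifies working in the narrower range $\delta \in (1/2 - \kappa/3, 1/2)$). A secondary verification is that the $\grandO(h^{3\delta})$ from Proposition \ref{prop.parametrix}, stated at the symbol level in the class $S(\R^2,\mathcal{L}(B^2(\R)\times\C,L^2(\R)\times\C))$, translates to a genuine $L^2$-operator norm bound; this follows from Calder\'on--Vaillancourt in the operator-valued setting used in Section \ref{subsection.symbol.class}.
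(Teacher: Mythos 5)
Your proposal is correct and follows essentially the same route as the paper: reduce via Lemma \ref{lem.E+-} to the invertibility of $\mathscr{E}_{h,\pm}(z)=z-\Peff+\grandO(h^{3\delta})$, invoke hypothesis (b) to bound $\|(z-\Peff)^{-1}\|$ by $C_0 h^{-3/2+\kappa}$ on $\mathcal{R}_h$, and close with a Neumann series under the constraint $\delta>\tfrac12-\tfrac{\kappa}{3}$, which is exactly the paper's choice \eqref{equ:delta}. Your explicit verification that the spectrum outside $D(\mu_0,C'h)$ lies at distance $\gtrsim (C'-C)h$ from $D(\mu_0,Ch)$ is a useful detail the paper leaves implicit, but the argument is otherwise identical.
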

\begin{proof}
  We recall that, for $z\in D(\mu_0,Ch)$, we have
  \[\mathscr{E}_{h,\pm}(z)=z-\Peff+\mathscr{O}(h^{3\delta})\,.\]
  By a classical perturbation argument using
  \eqref{eq.Res-bound}, we see that, for all $z\in \mathcal{R}_h$,
  $\mathscr{E}_{h,\pm}(z)$ is bijective. Indeed, it is bijective as
  soon as
  \[ \mathscr{O}(h^{3\delta}) \Vert (\Peff - z)^{-1} \Vert < 1\,, \]
  so it is sufficient that $h^{3 \delta} < h^{\frac{3}{2}-\kappa}$,
  which we enforce by taking, as we may,
  \begin{equation}
    \delta> \max(\tfrac{1}{2}-\tfrac{\kappa}{3}, \,\tfrac{1}{3})
    \label{equ:delta}
  \end{equation}
  (the lower bound $\delta>\frac{1}{3}$ comes from
  Section~\ref{subsec.3.2}). Thanks to Lemma \ref{lem.E+-}, we deduce
  that $\mathcal{R}_h\subset\rho(\widehat{\mathscr{L}}_h)$.
\end{proof}
	
Lemma \ref{lem.resolvent} and Proposition \ref{prop.fred0} imply that the spectrum of $\widehat{\mathscr{L}}_h$ in $D(\mu_0,Ch)$ is discrete (thanks to the analytic Fredholm theory). This also implies that, for $z\in D(\mu_0,Ch)$, $\mathscr{E}_{h,\pm}(z)$ is bijective except for discrete values of $z$.

\subsection{The spectrum of $\widehat{\mathscr{L}}_h$ lies near the one of $\Peff$}
The following proposition states that the spectrum of $\widehat{\mathscr{L}}_h$ must be located near the spectrum of the effective operator.
\begin{proposition}\label{prop.sp-close-speff}
There exist $h_0,\tilde C>0$ such that, for all $h\in(0,h_0)$,	if $\lambda\in D(\mu_0,Ch)\cap\mathrm{sp}(\widehat{\mathscr{L}}_h)$, then
\[ \mathrm{dist}(\lambda,\mathrm{sp}(\Peff))\leq \tilde C h^{\frac{3}{2} - \kappa} \,.\]
\end{proposition}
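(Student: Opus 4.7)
The plan is to argue by contradiction using the Schur-complement characterization of the spectrum given in Lemma \ref{lem.E+-} together with the effective expansion $\mathscr{E}_{h,\pm}(z) = z - \Peff + \grandO(h^{3\delta})$ from \eqref{eq.mu-eff0}--\eqref{eq.mu-eff} and the resolvent bound hypothesis \eqref{eq.Res-bound} on $\Peff$.

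Concretely, suppose $\lambda \in D(\mu_0, Ch) \cap \mathrm{sp}(\widehat{\mathscr{L}}_h)$ satisfies $\mathrm{dist}(\lambda, \mathrm{sp}(\Peff)) > \tilde C\, h^{\frac32 - \kappa}$ for some large constant $\tilde C$ to be determined. The first step is to invoke Lemma \ref{lem.E+-}, which tells us that $0 \in \mathrm{sp}(\mathscr{E}_{h,\pm}(\lambda))$, so in particular $\mathscr{E}_{h,\pm}(\lambda)$ is not invertible. The second step is to write
\[
\mathscr{E}_{h,\pm}(\lambda) = (\lambda - \Peff) + R_h(\lambda), \qquad \|R_h(\lambda)\| = \grandO(h^{3\delta}),
\]
where the remainder estimate comes from Proposition \ref{prop.parametrix} together with Lemma \ref{lem.P1.flat} (which kills the would-be $h^{1/2}$ term). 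The third step is to factor
\[
\mathscr{E}_{h,\pm}(\lambda) = (\lambda - \Peff)\bigl( \mathrm{Id} + (\lambda-\Peff)^{-1} R_h(\lambda) \bigr),
\]
which makes sense because the assumption on $\tilde C$ places $\lambda$ in the region where \eqref{eq.Res-bound} applies, giving
\[
\|(\lambda - \Peff)^{-1}\| \leq \frac{C_0}{\mathrm{dist}(\lambda,\mathrm{sp}(\Peff))} \leq \frac{C_0}{\tilde C\, h^{\frac32 - \kappa}}.
\]

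The final step is the quantitative check: the composition satisfies
\[
\|(\lambda-\Peff)^{-1} R_h(\lambda)\| \leq \frac{C_0 \cdot \grandO(h^{3\delta})}{\tilde C\, h^{\frac32 - \kappa}} = \grandO\bigl( \tilde C^{-1} h^{3\delta - \frac32 + \kappa} \bigr).
\]
By \eqref{equ:delta} we have already arranged $\delta > \tfrac12 - \tfrac{\kappa}{3}$, i.e.\ $3\delta - \tfrac32 + \kappa > 0$, so for $\tilde C$ large enough and $h$ small enough this operator norm is strictly less than $1$; a Neumann series then shows $\mathrm{Id} + (\lambda-\Peff)^{-1} R_h(\lambda)$ is invertible, hence so is $\mathscr{E}_{h,\pm}(\lambda)$. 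This contradicts $0 \in \mathrm{sp}(\mathscr{E}_{h,\pm}(\lambda))$ and proves the proposition.

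The main (minor) obstacle is ensuring that the choice of $\delta$ in Section \ref{subsec.3.2} is compatible with the exponent $\tfrac32 - \kappa$ dictated by the hypothesis \eqref{eq.Res-bound}; this is exactly why \eqref{equ:delta} was imposed in Lemma \ref{lem.resolvent}, so the same $\delta$ works here. No further technical difficulty arises, since everything is reduced to the abstract perturbation of a Fredholm operator with a good resolvent bound.
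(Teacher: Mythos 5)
Your proof is correct, and all the ingredients match the paper's: Lemma~\ref{lem.E+-}, the expansion $\mathscr{E}_{h,\pm}(z)=z-\Peff+\grandO(h^{3\delta})$, the resolvent hypothesis~\eqref{eq.Res-bound}, and the constraint~\eqref{equ:delta} on $\delta$. The route, however, is presented differently: you run a Neumann-series argument to show $\mathscr{E}_{h,\pm}(\lambda)$ is invertible whenever $\lambda$ is far from $\spectrum\Peff$, which is word for word the argument the paper already uses to prove Lemma~\ref{lem.resolvent} (namely that $\mathcal{R}_h\subset\rho(\widehat{\mathscr{L}}_h)$). Indeed, since each $D_{j,h}$ has radius exactly $h^{\frac32-\kappa}$ and is centered at a point of $\spectrum\Peff$, Lemma~\ref{lem.resolvent} alone already yields the proposition, so your proof essentially re-derives that lemma and then reads off the conclusion. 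The paper's own proof of the proposition is a (logically equivalent) reformulation carried out at the level of the eigenfunction $\psi$: it uses the approximate parametrix identities from Corollary~\ref{cor.E} to extract $\|\psi\|\leq C\|\mathfrak{P}\psi\|$ and $\|\mathscr{Q}^{[2]}_{h,\pm}\mathfrak{P}\psi\|\leq Ch^{3\delta}\|\mathfrak{P}\psi\|$, then applies~\eqref{eq.Res-bound} to $\mathfrak{P}\psi$. That concrete version has the side benefit of producing the inequality $\|\psi\|\leq C\|\mathfrak{P}\psi\|$, which is reused in the proof of the subsequent rank-one proposition; your abstract version avoids touching eigenfunctions and is slightly cleaner for the statement at hand.
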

\begin{proof}
Since we know that the spectrum is discrete, we may consider an eigenpair $(\lambda,\psi)$. By Corollary \ref{cor.E}, we have
\[\mathscr{Q}_{h,++}^{[2]}(\widehat{\mathscr{L}}_h-\lambda)+\mathscr{Q}_{h,+}^{[2]}\mathfrak{P}=\mathrm{Id}+\mathscr{O}(h^{3\delta})\,,\]
and 
\[\mathscr{Q}_{h,-}^{[2]}(\widehat{\mathscr{L}}_h-\lambda)+\mathscr{Q}_{h,\pm}^{[2]}\mathfrak{P}=\mathscr{O}(h^{3\delta})\,,\]
so that
\[\|\psi\|\leq C\|\mathfrak{P}\psi\|\,,\quad\|\mathscr{Q}_{h,\pm}^{[2]}\mathfrak{P}\psi\|\leq Ch^{3\delta}\|\psi\|\leq \tilde Ch^{3\delta}\|\mathfrak{P}\psi\| \,.\]

The resolvent bound \eqref{eq.Res-bound} provides us with
\begin{equation}
\mathrm{dist}(\lambda,\mathrm{sp}(\Peff))\|\varphi\|\leq C\left\|\left(\lambda-\Peff \right)\varphi\right\|\,,
\end{equation}
and thus, we get
\[\mathrm{dist}(\lambda,\mathrm{sp}(\Peff))\|\mathfrak{P}\psi\|\leq Ch^{3\delta}\|\mathfrak{P}\psi\|\,.\]
\end{proof}

\subsection{The spectrum of $\Peff$ lies near the one of $\widehat{\mathscr{L}}_h$}
\begin{proposition}
  \label{prop:dimension1aumoins}
  Consider $j\in\{1,\ldots,N\}$. There exists $h_0>0$ such that, for
  all $h\in(0,h_0)$, the circle of center $\nu_j(h)$ and radius
  $h^{\frac{3}{2}-\kappa}$ encircles at least one point in the spectrum of
  $\widehat{\mathscr{L}}_h$.
\end{proposition}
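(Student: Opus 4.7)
The plan is to argue by contradiction via a comparison of Riesz projectors along the contour $\Gamma_{j,h}$ of center $\nu_j(h)$ and radius $h^{\frac{3}{2}-\kappa}$. Suppose for contradiction that the open disc $D_{j,h}$ contains no eigenvalue of $\widehat{\mathscr{L}}_h$. Since Lemma~\ref{lem.resolvent} ensures $\Gamma_{j,h} \subset \mathcal{R}_h \subset \rho(\widehat{\mathscr{L}}_h)$, and since Proposition~\ref{prop.fred0} together with Lemma~\ref{lem.resolvent} makes the spectrum discrete in $D(\mu_0,Ch)$, the operator $\widehat{\mathscr{L}}_h - z$ would then be invertible on the whole closed disc $\overline{D_{j,h}}$.

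By Lemma~\ref{lem.E+-}, on this closed disc the map $z \mapsto \mathscr{E}_{h,\pm}(z)^{-1} = -\mathfrak{P}(\widehat{\mathscr{L}}_h - z)^{-1}\mathfrak{P}^*$ would be holomorphic, so Cauchy's theorem would yield
\[
\frac{1}{2\pi i}\oint_{\Gamma_{j,h}} \mathscr{E}_{h,\pm}(z)^{-1}\,dz = 0\,.
\]
I would then contrast this with the rank-one Riesz projector
\[
\Pi_j := \frac{1}{2\pi i}\oint_{\Gamma_{j,h}} (z-\Peff)^{-1}\,dz\,,
\]
which is well-defined (as $\Gamma_{j,h}$ lies in $\rho(\Peff)$ by assumption~(a)) and satisfies $\|\Pi_j\|\geq 1$ because it is a nonzero projector onto a one-dimensional generalized eigenspace.

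The comparison relies on writing, from \eqref{eq.mu-eff0}, $\mathscr{E}_{h,\pm}(z) = (z-\Peff) + R_h(z)$ with $\|R_h(z)\| = \grandO(h^{3\delta})$ uniformly in $z\in D(\mu_0,Ch)$. On $\Gamma_{j,h}$, the resolvent estimate \eqref{eq.Res-bound} gives $\|(z-\Peff)^{-1}\|\leq C_0 h^{-(3/2-\kappa)}$, and the choice~\eqref{equ:delta} guarantees $3\delta > \tfrac{3}{2}-\kappa$. Hence the Neumann series converges uniformly on $\Gamma_{j,h}$ and
\[
\mathscr{E}_{h,\pm}(z)^{-1} = (z-\Peff)^{-1} - \mathscr{E}_{h,\pm}(z)^{-1} R_h(z) (z-\Peff)^{-1} = (z-\Peff)^{-1} + \grandO\!\left(h^{3\delta - 3 + 2\kappa}\right).
\]
Integrating over $\Gamma_{j,h}$, whose length is $2\pi h^{3/2-\kappa}$, yields
\[
\Pi_j = \frac{1}{2\pi i}\oint_{\Gamma_{j,h}} \mathscr{E}_{h,\pm}(z)^{-1}\,dz + \grandO\!\left(h^{3\delta - 3/2 + \kappa}\right).
\]

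Combining this with the vanishing of the first term would force $\|\Pi_j\| = \grandO(h^{3\delta - 3/2 + \kappa}) \to 0$, contradicting $\|\Pi_j\|\geq 1$ for $h$ small enough. This rules out the initial assumption, so $\Gamma_{j,h}$ must enclose at least one eigenvalue of $\widehat{\mathscr{L}}_h$. The main (and really only) technical point to check carefully is that the Neumann expansion is valid uniformly along $\Gamma_{j,h}$: this reduces to the strict inequality $3\delta > \tfrac{3}{2}-\kappa$ built into~\eqref{equ:delta}, which makes the small parameter $h^{3\delta}\cdot h^{-(3/2-\kappa)}$ tend to zero.
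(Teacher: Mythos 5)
Your argument is correct and is essentially identical to the paper's proof: both expand $\mathscr{E}_{h,\pm}(z)^{-1}$ around $(z-\Peff)^{-1}$ via a Neumann series (valid uniformly on $\Gamma_{j,h}$ because of the choice \eqref{equ:delta}), integrate the difference over the contour of length $2\pi h^{3/2-\kappa}$ to obtain an $\grandO(h^{3\delta-3/2+\kappa})=o(1)$ bound, and conclude by contradiction since a vanishing contour integral of $\mathscr{E}_{h,\pm}(z)^{-1}$ would force the rank-one Riesz projector $\frac{1}{2\pi i}\oint_{\Gamma_{j,h}}(z-\Peff)^{-1}\dd z$ to have norm tending to $0$.
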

\begin{proof}

We recall Lemma \ref{lem.resolvent}. For $z \in \Gamma_{j,h}$, thanks to a Neumann series we get
\begin{equation}\label{eq.E+-approx}
\mathscr{E}_{h,\pm,z}^{-1}=(z-\Peff)^{-1}(\mathrm{Id}+\mathscr{N}_{h,z})\,,
\end{equation}
where the bounded operator $\mathscr{N}_{h,z}$ satisfies 
\[\|\mathscr{N}_{h,z}\|\leq
  Ch^{3\delta}\mathrm{dist}(z,\mathrm{sp}(\Peff))^{-1}\leq \tilde C
  h^{3\delta-\frac{3}{2}+\kappa}<1 \,,\] uniformly with respect to
$z\in\Gamma_{j,h}$; the last inequality coming
from~\eqref{equ:delta}. Therefore, we get
\[\|\mathscr{E}_{h,\pm,z}^{-1}-(z-\Peff)^{-1}\|\leq \tilde C
  h^{3\delta-3+2\kappa}\,.\] Integrating over the contour (whose
length is $2\pi h^{\frac{3}{2}-\kappa}$), we find that
\[\left\|\frac{1}{2i\pi}
    \int_{\Gamma_{j,h}}\mathscr{E}_{h,\pm,z}^{-1}\dd z-\frac{1}{2i\pi}
    \int_{\Gamma_{j,h}}(z-\Peff)^{-1}\dd z\right\|\leq \tilde C
  h^{3\delta-\frac{3}{2}+\kappa}\,.\] We see that the right-hand-side
goes to $0$ when $h$ goes to $0$. We recall that
$\frac{1}{2i\pi}\int_{\Gamma_{j,h}}(z-\Peff)^{-1}\dd z$ is the (Riesz)
projection on the eigenspace of $\Peff$ associated with $\nu_j(h)$. If
$\Gamma_{j,h}$ does not encircles any element in the spectrum of
$\widehat{\mathscr{L}}_h$, we see with Lemma \ref{lem.E+-} that
$\frac{1}{2i\pi}\int_{\Gamma_{j,h}}\mathscr{E}_{h,\pm,z}^{-1}\dd z=0$,
and thus that the projection
$\frac{1}{2i\pi}\int_{\Gamma_{j,h}}(z-\Peff)^{-1}\dd z$ must be zero,
and this would be a contradiction.
\end{proof}

In fact, we can prove slightly more.
\begin{proposition}
	Let us consider the spectral projector $\Pi_{j,h}$ of $\widehat{\mathscr{L}}_h$ associated with the contour $\Gamma_{j,h}$. Then,
	\[\dim\mathrm{Ran}\,\Pi_{j,h}= 1 \,.\]
	In other words, there is exactly one eigenvalue of $\widehat{\mathscr{L}}_h$ encircled by $\Gamma_{j,h}$.
\end{proposition}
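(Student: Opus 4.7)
The plan is to express $\Pi_{j,h}$ via the Grushin parametrix and identify its principal part as a rank-$1$ operator built from the Riesz projector $\pi^{\mathsf{eff}}_j$ of $\Peff$ at $\nu_j(h)$, which has rank $1$ by Theorem~\ref{Main-Thm}~\eqref{thm.a}. Combined with Proposition~\ref{prop:dimension1aumoins} (already yielding $\dim\mathrm{Ran}\,\Pi_{j,h}\geq 1$), this will produce the announced equality. First I would inflate $\Gamma_{j,h}$ slightly so that it still lies in $\mathcal{R}_h\subset\rho(\widehat{\mathscr{L}}_h)$, apply Lemma~\ref{lem.E+-} and identity~\eqref{eq.resE+-} on the contour to get
\[
  \Pi_{j,h}=-\frac{1}{2i\pi}\oint_{\Gamma_{j,h}}(\widehat{\mathscr{L}}_h-z)^{-1}\dd z=-\frac{1}{2i\pi}\oint_{\Gamma_{j,h}}\mathscr{E}_{h,++}\,\dd z+\frac{1}{2i\pi}\oint_{\Gamma_{j,h}}\mathscr{E}_{h,+}\mathscr{E}_{h,\pm}^{-1}\mathscr{E}_{h,-}\,\dd z\,,
\]
and then observe that the full Grushin parametrix $\mathscr{E}_h=(\mathrm{Id}+\mathscr{R}_{h,z})^{-1}\Op^{w,2}_h\mathscr{Q}^{[2]}_h$ is holomorphic in $z$ on the whole disc $D(\mu_0,Ch)$: indeed, by Lemma~\ref{lem.bijpiperp} the resolvent $(\mathsf{P}_0(X_2)-z)^{-1}\Pi^{\perp}$ is analytic there, and for $h$ small $\|\mathscr{R}_{h,z}\|=\grandO(h^{3\delta})<1$ uniformly, so $(\mathrm{Id}+\mathscr{R}_{h,z})^{-1}$ is analytic. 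Cauchy's theorem then kills the $\mathscr{E}_{h,++}$ integral.

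Next I would plug in the parametrix expansions. The identifications $\mathscr{Q}_{0,+}=\,\cdot\,f_{X_2}$ and $\mathscr{Q}_{0,-}=\langle\,\cdot\,,f_{X_2}\rangle$ from Proposition~\ref{prop.matrix.P0}, combined with $\mathscr{Q}^{[2]}_h=\mathscr{Q}_0+h^{1/2}\mathscr{Q}_1+h\mathscr{Q}_2$, give $\mathscr{E}_{h,+}=\mathfrak{P}^*+\grandO(h^{1/2})$ and $\mathscr{E}_{h,-}=\mathfrak{P}+\grandO(h^{1/2})$ uniformly on $\Gamma_{j,h}$, while \eqref{eq.E+-approx} reads $\mathscr{E}_{h,\pm}^{-1}=(z-\Peff)^{-1}(\mathrm{Id}+\mathscr{N}_{h,z})$ with $\|\mathscr{N}_{h,z}\|=\grandO(h^{3\delta-\frac32+\kappa})$. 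Distributing the triple product and using $\|(z-\Peff)^{-1}\|=\grandO(h^{-\frac32+\kappa})$ together with the contour length $2\pi h^{\frac32-\kappa}$, each cross-term yields a bounded operator of norm $\grandO(h^{1/2})+\grandO(h^{3\delta-\frac32+\kappa})=o(1)$ by~\eqref{equ:delta}. This leaves
\[
  \Pi_{j,h}=\mathfrak{P}^*\pi^{\mathsf{eff}}_j\mathfrak{P}+\mathscr{R}_h\,,\qquad \pi^{\mathsf{eff}}_j:=\frac{1}{2i\pi}\oint_{\Gamma_{j,h}}(z-\Peff)^{-1}\,\dd z\,,\qquad \|\mathscr{R}_h\|=o(1)\,.
\]

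To conclude, the subspace $W:=\mathrm{Ran}\,(\mathfrak{P}^*\pi^{\mathsf{eff}}_j)\subset L^2(\R^2)$ has dimension at most $1$. If $\dim\mathrm{Ran}\,\Pi_{j,h}\geq 2$, a dimension count produces a unit vector $\psi\in\mathrm{Ran}\,\Pi_{j,h}\cap W^{\perp}$; since $\Pi_{j,h}\psi=\psi$ and $\mathfrak{P}^*\pi^{\mathsf{eff}}_j\mathfrak{P}\psi\in W$ is orthogonal to $\psi$,
\[
  1=\langle\Pi_{j,h}\psi,\psi\rangle=\langle\mathfrak{P}^*\pi^{\mathsf{eff}}_j\mathfrak{P}\psi,\psi\rangle+\langle\mathscr{R}_h\psi,\psi\rangle=\langle\mathscr{R}_h\psi,\psi\rangle\leq\|\mathscr{R}_h\|\,,
\]
contradicting $\|\mathscr{R}_h\|<1$ for $h$ small. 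Hence $\dim\mathrm{Ran}\,\Pi_{j,h}\leq 1$, and Proposition~\ref{prop:dimension1aumoins} yields the equality. The main technical obstacle is the norm bookkeeping on the contour: the singularity $h^{-\frac32+\kappa}$ of $(z-\Peff)^{-1}$ must be absorbed both by the small length $h^{\frac32-\kappa}$ of $\Gamma_{j,h}$ and by the $h^{1/2}$-smallness of the subprincipal corrections to $\mathscr{E}_{h,\pm}$, which is precisely what the constraint $\delta>\frac12-\frac\kappa3$ from~\eqref{equ:delta} enforces.
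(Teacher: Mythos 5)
Your proof is correct, and it takes a genuinely different route from the paper. The paper works eigenvector by eigenvector: for $\psi\in\mathrm{Ran}\,\Pi_{j,h}$ it first shows $\|(\widehat{\mathscr{L}}_h-\nu_j(h))\psi\|\lesssim h^{3/2-\kappa}\|\psi\|$ via a contour integral, deduces $\|\psi\|\lesssim\|\mathfrak{P}\psi\|$ from the Grushin relations, then shows $\|(\nu_j(h)-\Peff)\mathfrak{P}\psi\|\lesssim h^{3/2-\kappa}\|\mathfrak{P}\psi\|$, and finally injects $\mathfrak{P}(\mathrm{Ran}\,\Pi_{j,h})$ into $\mathrm{Ran}\,\Pi^{\mathrm{eff}}_{j,h}$ by proving $\|\Pi^{\mathrm{eff}}_{j,h}\varphi-\varphi\|=o(1)\|\varphi\|$ on that subspace, with a slightly \emph{larger} auxiliary contour $\tilde\Gamma_{j,h}$ of radius $h^{3/2-\tilde\kappa}$, $\tilde\kappa>\kappa$. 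You instead write $\Pi_{j,h}$ itself, via~\eqref{eq.resE+-}, as a perturbation of a fixed rank-$\leq1$ operator $\mathfrak{P}^*\pi^{\mathsf{eff}}_j\mathfrak{P}$: the holomorphy of $\mathscr{E}_{h,++}$ on the full disc $D(\mu_0,Ch)$ (which follows directly from $\mathscr{E}_h=(\Op^{w,2}_h\mathscr{P}_h)^{-1}$ being the inverse of an affine-in-$z$ family of invertible operators, by Corollary~\ref{cor.E}) kills the first contour integral, and the bookkeeping of the $h^{1/2}$-subprincipal corrections and the Neumann remainder $\mathscr{N}_{h,z}$, against the contour length $h^{3/2-\kappa}$ and the resolvent size $h^{-3/2+\kappa}$, gives $\|\mathscr{R}_h\|=\grandO(h^{1/2})+\grandO(h^{3\delta-3/2+\kappa})=o(1)$ exactly as you state, the exponent being positive by~\eqref{equ:delta}. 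The orthogonality argument then yields $\dim\mathrm{Ran}\,\Pi_{j,h}\leq1$ without the auxiliary contour. Your route is more structural (a direct rank-stability statement for the spectral projector), whereas the paper's argument is more elementary in its ingredients but requires the two-contour trick; both use the same Grushin data and the same exponent inequality $3\delta>3/2-\kappa$ in the end.
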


\begin{proof}
  We already know from Proposition~\ref{prop:dimension1aumoins} that
  $\dim\mathrm{Ran}\,\Pi_{j,h}\geq 1$. As in the proof of Proposition
  \ref{prop.sp-close-speff}, we have
\[
  \|\psi\|\leq C\|\mathfrak{P}\psi\| +
  C\|(\widehat{\mathscr{L}}_h-\nu_j(h))\psi\|+Ch^{3\delta}\|\psi\|\,,
\]
	and
	\[
          \|(\nu_j(h) -
          \Op^{w}_h\mu^{\mathrm{eff}}_h)\mathfrak{P}\psi\|\leq C
          h^{3\delta}\|\psi\|+C\|(\widehat{\mathscr{L}}_h -
          \nu_j(h))\psi\|\,.
        \]
        Let us assume that $\psi$ belong to the range of the
        projection
	\[\Pi_{j,h}=\frac{1}{2i\pi}\int_{\Gamma_{j,h}}(\eta-\widehat{\mathscr{L}}_h)^{-1}\dd\eta\,.\]
	We have
	\[\begin{split}
	(\widehat{\mathscr{L}}_h-\nu_j(h))\psi&=\frac{1}{2i\pi}\int_{\Gamma_{j,h}}(\widehat{\mathscr{L}}_h-\nu_j(h))(\eta-\widehat{\mathscr{L}}_h)^{-1}\psi\dd\eta\\
	&=\frac{1}{2i\pi}\int_{\Gamma_{j,h}}(\eta-\nu_j(h))(\eta-\widehat{\mathscr{L}}_h)^{-1}\psi\dd\eta\,,
	\end{split}\]
	so that
	\[\|(\widehat{\mathscr{L}}_h-\nu_j(h))\psi\|\leq h^{\frac32-\kappa} h^{\frac32-\kappa}\sup_{\eta\in\Gamma_{j,h}}\|(\eta-\widehat{\mathscr{L}}_h)^{-1}\|\|\psi\| \,.\]
	We recall \eqref{eq.E+-approx}, and notice that, for all $\eta\in \mathcal{R}_h$, in view of~\eqref{eq.Res-bound},
	\[\|\mathscr{E}_{h,\pm}^{-1}(\eta)\|\leq C \|(\eta-\Peff)^{-1}\|\leq C\mathrm{dist}(\eta,\mathrm{sp}(\Peff))^{-1}\,.\]
	With \eqref{eq.resE+-}, this gives
        \begin{equation}
        \|(\eta-\widehat{\mathscr{L}}_h)^{-1}\|\leq
        C\mathrm{dist}(\eta,\mathrm{sp}(\Peff))^{-1} \leq
        Ch^{\kappa-\frac{3}{2}}\,.\label{equ:res.hat.L}
      \end{equation}
	Thus, for all $\psi\in\mathrm{Ran}\,\Pi_{j,h}$,
	\[\|(\widehat{\mathscr{L}}_h-\nu_j(h))\psi\|\leq C h^{\frac32-\kappa}\|\psi\|\,.\]
	It follows that
		\begin{equation}\label{eq.injective}
		\|\psi\|\leq C\|\mathfrak{P}\psi\|\,,
	\end{equation}
	and
	\[\|(\nu_j(h)-\Peff)\mathfrak{P}\psi\|\leq Ch^{\frac32-\kappa}\|\psi\|\leq \tilde Ch^{\frac32-\kappa}\|\mathfrak{P}\psi\|\,.\]
	In particular, \eqref{eq.injective} implies that $\dim \mathfrak{P}(\mathrm{Ran}\,\Pi_{j,h})=\dim \mathrm{Ran}\,\Pi_{j,h}$. Then,
	for all $\varphi\in \mathfrak{P}(\mathrm{Ran}\,\Pi_{j,h})$,
	\[\|(\nu_j(h)-\Peff)\varphi\|\leq Ch^{\frac32-\kappa}\|\varphi\|\,.\]
	Let us now consider the spectral projection $\Pi^{\mathrm{eff}}_{j,h}$ associated with $\Peff$ and the contour $\Gamma_{j,h}$:
	\[\Pi^{\mathrm{eff}}_{j,h}=\frac{1}{2i\pi}\int_{\Gamma_{j,h}}(\eta-\Peff)^{-1}\dd\eta\,.\]
	In fact, since $\Gamma_{j,h}$ encircles only $\nu_j(h)$ as element of the spectrum of $\Peff$ and due to the gap of order $h$ between the eigenvalues of $\Peff$, we have also
	\[\Pi^{\mathrm{eff}}_{j,h}=\frac{1}{2i\pi}\int_{\tilde\Gamma_{j,h}}(\eta-\Peff)^{-1}\dd\eta\,,\]
	where $\tilde\Gamma_{j,h}$ is the circle of center $\nu_j(h)$
        and radius $h^{\frac{3}{2}-\tilde \kappa}$ where
        $\tilde\kappa> \kappa$.  We have
	\[
          \begin{split}
            \Pi^{\mathrm{eff}}_{j,h}\varphi & =\varphi+\frac{1}{2i\pi}\int_{\tilde\Gamma_{j,h}}\left[(\eta-\Peff)^{-1}-(\eta-\nu_j(h))^{-1}\right]\varphi\,\dd\eta\\
            & =\varphi+\frac{1}{2i\pi}\int_{\tilde\Gamma_{j,h}}(\eta-\Peff)^{-1}(\eta-\nu_j(h))^{-1}(\Peff-\nu_j(h))\varphi\,\dd\eta\,.
      \end{split}
	\]
        Since $\tilde\Gamma_{j,h}\subset\mathcal{R}_h$, we deduce that
	\[\|\Pi^{\mathrm{eff}}_{j,h}\varphi-\varphi\|\leq C
          h^{\tilde\kappa - \frac{3}{2}} h^{\tilde\kappa -
            \frac{3}{2}}
          h^{\frac32-\kappa}|\tilde\Gamma_{j,h}|\|\varphi\|\leq C
          h^{\tilde\kappa-\kappa}\|\varphi\| \,.\] In particular, for
        all $\varphi\in \mathfrak{P}(\mathrm{Ran}\,\Pi_{j,h})$,
	\[\|\varphi\|\leq C\|\Pi^{\mathrm{eff}}_{j,h}\varphi\|\,.\]
	This implies that
	\[1=\dim \mathrm{Ran}\, \Pi_{j,h}^{\mathrm{eff}}\geq \dim \mathrm{Ran}\,\Pi_{j,h}\,,\]
	and the conclusion follows.
\end{proof}

\section{Removing the cutoff function}\label{sec.6.remove.chi}

In the previous section, we proved that the spectrum of
$\widehat{\mathscr{L}}_h$ is close to the spectrum of
$\Peff$. $\widehat{\mathscr{L}}_h$ was the operator in which we
inserted cutoff functions $\chi_{\delta}$ (see Definition
\ref{def.Lhat}). Let us now remove these cutoff functions and prove
that the spectrum of the initial operator $\Loh$ (defined in
\eqref{eq.H0.hat}) is close to the spectrum of
$\widehat{\mathscr{L}}_h$ (see Proposition \ref{prop.Lh.to.Lh0}).

\begin{proposition}\label{prop.Lh.Fredholm}
The families $\left(\Loh-z\right)_{z\in D(\mu_0,Ch)}$ and $\left(\mathscr{L}_h-z\right)_{z\in D(\mu_0 h, C h^2)}$ are analytic families of Fredholm operators of index $0$. In particular the spectrum of $\Loh$ in $D(\mu_0,Ch)$ and of $\mathscr{L}_h$ in $D(\mu_0 h, C h^2)$ are discrete.
\end{proposition}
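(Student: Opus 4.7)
The plan is to reduce the statement for $\mathscr{L}_h$ to the one for $\Loh$, and then to transfer the Fredholm property from $\widehat{\mathscr{L}}_h$ (Proposition \ref{prop.fred0}) to $\Loh$. By Proposition \ref{prop.Lh0.hat}, $\mathscr{L}_h$ is unitarily equivalent to $h\Loh$, so the disc $D(h\mu_0, Ch^2)$ corresponds to $D(\mu_0, Ch)$ under $z \mapsto z/h$. Analyticity of both families in $z$ is immediate from the linear dependence. It is therefore enough to prove that $(\Loh - z)_{z \in D(\mu_0, Ch)}$ is an analytic family of Fredholm operators of index $0$.

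For $\Loh$, my plan is to redo the operator-valued symbolic calculus of Sections \ref{sec.3.Lh.hat}--\ref{sec.5.Reduction} with the genuine symbol $\widehat{H}^0$ in place of the truncated symbol $p_h$. The ellipticity of the operator-valued principal part (Lemma \ref{lem.ferme}) and the bordering construction of Proposition \ref{prop.matrix.P0} depend only on the quadratic-in-$X_1$ lower bound of the symbol and on the harmonic-oscillator structure of $\mathsf{P}_0$; both survive the removal of $\chi_\delta$. A Grushin parametrix analogous to Proposition \ref{prop.parametrix} can thus be built, producing an analytic family $\mathscr{E}'_{h,\pm}(z)$ whose Fredholm index coincides with that of $\Loh - z$ by the usual Schur-complement property, exactly as in Lemma \ref{lem.E+-}.

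The key observation is that the scalar symbol $\mathscr{E}'_{h,\pm}(z)$ agrees with $z - \mu^{\mathsf{eff}}_h$ modulo $\grandO(h^\infty)$: all matrix elements entering the lower-right entry of the parametrix are scalar products against $f_{X_2}$, whose Gaussian decay at scale $|X_1| = \grandO(1)$ absorbs any contribution from the region $|X_1| \gtrsim h^{\delta - 1/2}$ where $\widehat{H}^0$ and $p_h$ actually differ. Hence $\mathscr{E}'_{h,\pm}(z)$ is a small perturbation of the scalar operator $z - \Peff$, which is Fredholm of index $0$ on $D(\mu_0, Ch)$ by the compact-perturbation-plus-G\aa rding argument already used in the proof of Proposition \ref{prop.fred0}. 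The Fredholm property of index $0$ for $\Loh - z$ then follows.

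The main technical obstacle is to control the Grushin construction in the absence of the cutoffs: the Taylor remainder in $\hbar = h^{1/2}$ of $\widehat{H}^0$ contains terms unbounded in $X_1$, so it no longer lies in $S(\R^2, \mathcal{L}(B^2(\R), L^2(\R)))$. My plan is to work with operator-valued symbols weighted by powers of $1 + |X_1|^2$, whose uniform boundedness in $X_2$ is controlled via the closed-graph estimate of Lemma \ref{lem.ferme}, and to exploit the exponential decay of $f_{X_2}$ to collapse these unbounded pieces into $\grandO(h^\infty)$ contributions to the effective scalar symbol. Once the Fredholm property of index $0$ is proved, discreteness of the spectra of $\Loh$ in $D(\mu_0, Ch)$ and of $\mathscr{L}_h$ in $D(h\mu_0, Ch^2)$ will follow from the analytic Fredholm theorem, as at the end of Section \ref{sec.5.Reduction}.
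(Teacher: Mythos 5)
Your proposal takes a genuinely different route from the paper's, and it contains a gap that I do not think can be closed in the form you sketch.

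The paper's proof is much more elementary. It reduces, by unitary equivalence, to $\mathscr{L}_h - hz$ on $D(\mu_0 h, Ch^2)$, then adds a \emph{compactly supported} electric correction $h(u-iv)^{-1}\chi$ so that the localizing function $F + \chi$ is uniformly bounded below by some $\gamma > \min F$. For the corrected operator $P$, a direct coercivity estimate (using $(ih\nabla + \mathbf{A})^2 \geq hB$ and the algebra of $u,v$) shows that $(u-iv)(P - hz)$ is bounded below, hence invertible. Since $\mathscr{L}_h - hz$ differs from $P - hz$ by a relatively compact perturbation, it is Fredholm of index $0$. Finally, the same coercivity pushed to $\Re z \to -\infty$ shows that the resolvent set is non-empty in the relevant region, so analytic Fredholm theory gives discreteness. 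No Grushin machinery or operator-valued symbolic calculus is involved at any point.

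Your plan, by contrast, tries to rerun the Grushin construction of Sections \ref{sec.3.Lh.hat}--\ref{sec.5.Reduction} on $\Loh$ with the untruncated symbol $\widehat{H}^0$. You correctly diagnose the obstruction: the Taylor remainder in $\hbar$ is unbounded in $X_1$ and hence not in $S(\R^2, \mathcal{L}(B^2(\R), L^2(\R)))$. But the proposed fix---weighted operator-valued classes controlled by the Gaussian decay of $f_{X_2}$---does not repair the argument. The Gaussian decay only controls the scalar lower-right entry $\mathscr{Q}^{[2]}_{h,\pm}$ (matrix elements against $f_{X_2}$); the Fredholm equivalence between $\Loh - z$ and the effective scalar operator requires the \emph{full} parametrix identity $\mathscr{Q}\mathscr{P} = \mathrm{Id} + \mathscr{R}$ with $\mathscr{R}$ small (or at least bounded) on $L^2(\R^2)\times L^2(\R)$, and the entries $\mathscr{E}_{h,++}, \mathscr{E}_{h,\pm}$ involve $(\mathsf{P}_0 - z)^{-1}\Pi^\perp$ on the whole of $f_{X_2}^\perp$, not just pairings with the ground state. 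If $\mathscr{R}$ lands in a weighted class with a polynomial weight in $X_1$, it cannot be inverted by a Neumann series and the Schur-complement equivalence of Lemma \ref{lem.E+-} is no longer available. This is precisely why the paper introduces the cutoff $\chi_\delta$ at the level of the symbol in Definition \ref{def.Lhat}: it is the device that keeps the whole expansion in the bounded class. Removing the cutoff and then trying to recover it by weighted estimates essentially asks for a new version of the entire symbolic calculus, which is a much heavier undertaking than the two-paragraph coercivity argument the paper uses. Moreover, if your weighted Grushin route did work, you would also still need to produce at least one point of the resolvent set of $\Loh$ inside $D(\mu_0, Ch)$ to conclude discreteness; the reference ``as at the end of Section \ref{sec.5.Reduction}'' points to a step that uses the resolvent bound on $\Peff$, which is an additional hypothesis, whereas the paper's argument produces resolvent points unconditionally by letting $\Re z \to -\infty$ in the coercivity estimate.

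In short: your reduction to $\Loh$ is fine, but the core of the argument should be a direct coercivity-plus-compact-perturbation estimate (easiest to run on $\mathscr{L}_h$ itself), not a cutoff-free Grushin construction.
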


\begin{proof}
  By using the unitary equivalence of $h\Loh$ and
  $\mathscr{L}_h$, we can focus on the family
  $\left(\mathscr{L}_h-z\right)_{z\in D(\mu_0 h, Ch^2)}$. Let $u>0$
  and $v \in \R$ given by Assumption \ref{Assumption}. The function
  $F = u( B + \Re V) + v \mathsf{Im} V$ admits a global
  minimum, and there exists a compact $K$ and a constant
  $\gamma > \min F$ such that,
\[ \forall q \in \R^2 \setminus K, \quad F(q) \geq \gamma \,. \]
Thus we may consider a smooth cutoff function $\chi$ supported near $K$ such that
\[ F + \chi \geq \gamma > \min F \,. \]
Define
\[P = \mathscr{L}_h + h (u-iv)^{-1} \chi \,. \]
Proving that $P-hz$ is invertible is enough to conclude that $\mathscr{L}_h- hz$ is a Fredholm operator of index $0$. Let $w = (u-iv) z$ and $Q=(u-iv)P$, so that
\begin{align*}
(u-iv)(P-hz) &= Q-hw \\
&= \left( u ((ih \nabla + \A)^2 + h \Re (V-z)) + v h \mathsf{Im} (V-z) + h \chi \right)\\ 
& \quad + i \left( u h \mathsf{Im} (V-z) -v( (ih \nabla + \A)^2 + h \Re(V-z)) \right) \,.
\end{align*}
Each parenthesis being selfadjoint, we deduce for $\psi \in \mathsf{Dom}( \mathscr{L}_h)$ that
\begin{align*}
\vert \langle (Q-hw) \psi, \psi \rangle \vert &\geq \Re \langle (Q-hw) \psi, \psi \rangle \\
& \geq \left\langle \left( u(ih\nabla + \A)^2 + u h \Re(V-z) + v h \mathsf{Im}(V-z) + h\chi \right) \psi, \psi \right\rangle \,.
\end{align*}
Using the lower bound $(ih \nabla + \A)^2 \geq h B$, we get
\[ \vert \langle (Q-hw) \psi, \psi \rangle \vert \geq h \langle (u \Re(B + V - z) + v \mathsf{Im}(V-z) + \chi ) \psi, \psi \rangle \,. \]
For $z \in \C$ such that $u \Re (z- \mu_0) + v \mathsf{Im} (z- \mu_0) < C h$, since $\mu_0$ satisfies
\[ \min F = u \Re \mu_0 + v \mathsf{Im} \mu_0 \]
we have
\[ \vert \langle (Q-hw) \psi, \psi \rangle \vert \geq h (\gamma - \min F - C h) \Vert \psi \Vert^2 \,. \]
Hence $Q-hw$ is one-to-one with closed range. We can apply the same arguments for the adjoint of $Q-hw$. We deduce that $Q-w$ is bijective, and so is $P-hz$.\\

Thus $\mathscr{L}_h-hz$, for $z$ in
$\Omega = \lbrace z\in\C : u \Re (z- \mu_0) + v \mathsf{Im} (z- \mu_0) < C h
\rbrace$, is an analytic family of Fredholm operators with index
$0$. To conclude discreteness of the spectrum it remains to show that
$\Omega$ intersects the resolvent set of $\mathscr{L}_h$. To see this,
note that
$\Re(u-iv) \mathscr{L}_h \geq u h (B + \Re V) + vh \mathsf{Im} V \geq
u h \Re \mu_0 + vh \mathsf{Im} \mu_0$, and thus when
$\Re z \rightarrow - \infty$ (in $\Omega$) we must reach the resolvent set. The proposition follows since $D(\mu_0,Ch) \subset \Omega$.
\end{proof}

\begin{lemma}
There exists $h_0>0$ such that, for all $h\in(0,h_0)$ and all $\lambda\in D(\mu_0,Ch)\cap \mathrm{sp}(\Loh)$, we have $\lambda\in\cup_{j=1}^N D_{j,h}$. In particular, for all $j\in\{1,\ldots,N\}$, $\Gamma_{j,h}\subset\rho(\Loh)$.
\end{lemma}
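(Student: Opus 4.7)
The plan is a proof by contradiction, using the microlocalization of Lemma~\ref{lem.microlocalisation} to transport to $\Loh$ the spectral information already established for $\widehat{\mathscr{L}}_h$ in Section~\ref{sec.5.Reduction}. Assume that some $\lambda$ lies in $\mathcal{R}_h\cap\mathrm{sp}(\Loh)$, where $\mathcal{R}_h := D(\mu_0, Ch)\setminus\bigcup_{j=1}^N D_{j,h}$. By Proposition~\ref{prop.Lh.Fredholm} the spectrum of $\Loh$ in $D(\mu_0, Ch)$ is discrete, so we may pick a normalized eigenfunction $\psi\in L^2(\R^2)$ with $\Loh\psi=\lambda\psi$ and $\|\psi\|=1$.

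Transporting $\psi$ through the unitary $V$ of Lemma~\ref{lemma.Lh0} yields an eigenfunction of $\widetilde{\mathscr{L}}_h$ with eigenvalue $h\lambda \in D(\mu_0 h, Ch^2)$. Applying Lemma~\ref{lem.microlocalisation} with a cutoff $\tilde\chi\in\mathscr{C}_0^\infty(\R)$ chosen so that
\[
\tilde\chi_\delta(X_1) := \tilde\chi(h^{1/2-\delta} x_1)\,\tilde\chi(h^{1/2-\delta} \xi_1)
\]
has support strictly contained in $\{\chi_\delta=1\}$ (for the $\chi_\delta$ of Definition~\ref{def.Lhat}), and pulling back through the rescaling $V$, we obtain
\[
\psi = \Op_1^{w,1}(\tilde\chi_{\delta})\psi + \grandO_{L^2}(h^\infty).
\]
By construction, the symbols $\widehat{H}^0$ from~\eqref{eq.H0.hat} and $p_h$ from~\eqref{symbol-ph} coincide on $\{\chi_\delta = 1\}$, so in the operator-valued pseudodifferential calculus of Section~\ref{sec.3.Lh.hat}, every term of the Moyal expansion of $(\widehat{H}^0 - p_h)\star\tilde\chi_\delta$ is the product of two symbols whose supports are separated by a distance $\gtrsim h^{\delta-1/2}\to+\infty$. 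This yields $(\Loh - \widehat{\mathscr{L}}_h)\Op_1^{w,1}(\tilde\chi_\delta)=\grandO(h^\infty)$ as a bounded operator on $L^2(\R^2)$. Combined with the microlocalization above,
\[
(\widehat{\mathscr{L}}_h - \lambda)\psi = (\widehat{\mathscr{L}}_h - \Loh)\psi = \grandO_{L^2}(h^\infty).
\]

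Since $\lambda\in\mathcal{R}_h$, Lemma~\ref{lem.resolvent} gives $\lambda\in\rho(\widehat{\mathscr{L}}_h)$ and the resolvent estimate~\eqref{equ:res.hat.L} yields $\|(\widehat{\mathscr{L}}_h - \lambda)^{-1}\|\leq Ch^{\kappa - 3/2}$. Hence
\[
1 = \|\psi\| \leq \|(\widehat{\mathscr{L}}_h - \lambda)^{-1}\|\cdot\|(\widehat{\mathscr{L}}_h - \lambda)\psi\| = \grandO(h^\infty),
\]
which is absurd for $h$ small enough, proving $\mathrm{sp}(\Loh)\cap D(\mu_0, Ch)\subset\bigcup_j D_{j,h}$. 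The inclusion $\Gamma_{j,h}\subset\rho(\Loh)$ then follows from the gap $|\nu_j(h) - \nu_k(h)|\geq ch$ of Theorem~\ref{Main-Thm}~\eqref{thm.a} together with $\kappa<1/2$: the discs $D_{j,h}$ of radius $h^{3/2-\kappa}$ are pairwise disjoint for $h$ small, so each circle $\Gamma_{j,h}$ lies in $\mathcal{R}_h\subset\rho(\Loh)$.

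The main obstacle is the mixed-calculus estimate $(\Loh-\widehat{\mathscr{L}}_h)\Op_1^{w,1}(\tilde\chi_\delta)=\grandO(h^\infty)$. Unlike a standard semiclassical disjoint-support bound, here the $X_1$-quantization is non-semiclassical while the cutoffs live at the $h$-dependent scale $h^{\delta-1/2}$, and the $X_2$-calculus is semiclassical. Executing this estimate rigorously requires an argument in the spirit of the proof of Lemma~\ref{lem.microlocalisation}, exploiting that $\delta<1/2$ forces the separation scale $h^{\delta-1/2}$ to dominate the natural length scale $1$ of the non-semiclassical calculus.
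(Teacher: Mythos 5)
Your proposal is correct and follows essentially the same route as the paper: contradiction, discreteness from Proposition~\ref{prop.Lh.Fredholm}, microlocalization of the eigenfunction from Lemma~\ref{lem.microlocalisation} to replace $\Loh$ by $\widehat{\mathscr{L}}_h$ up to $\mathscr{O}(h^\infty)$, and then the resolvent bound~\eqref{equ:res.hat.L} on $\mathcal{R}_h$ to force $\psi=0$. The mixed-calculus estimate $(\Loh-\widehat{\mathscr{L}}_h)\Op_1^{w,1}(\tilde\chi_\delta)=\grandO(h^\infty)$ that you flag as the delicate point is precisely the step the paper also invokes tersely (``we can add the cutoff functions $\chi_\delta$ in the symbol''), so your proof is neither more nor less complete than the paper's on that point; the honest identification of where the work lies is a plus.
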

\begin{proof}
Assume that it is not true. Then, for some $h$ (as small as desired), we can find an element of the spectrum $\lambda\in D(\mu_0,Ch)\setminus \cup_{j=1}^N D_{j,h}$ and it is a discrete eigenvalue according to Proposition \ref{prop.Lh.Fredholm}. Consider an associated normalized eigenfunction
\[\Loh\psi=\lambda\psi\,.\]
Using the microlocalization Lemma \ref{lem.microlocalisation} on the eigenfunctions of $\Loh$, we can add the cutoff functions $\chi_{\delta}$ in the symbol to get
\[\widehat{\mathscr{L}}_h\psi=\lambda\psi+\mathscr{O}(h^\infty)\,.\]
But we know from~\eqref{equ:res.hat.L} that the spectrum of
$\widehat{\mathscr{L}}_h$ inside $D(\mu_0,Ch)$ lies in
$\cup_{j=1}^N D_{j,h}$ and that the resolvent is controlled by a
negative power of $h$:
\[\|(\widehat{\mathscr{L}}_h-z)^{-1}\|\leq C h^{-\frac32+\kappa}\,,\]
for $z\in\mathcal{R}_h$.
This implies that $\psi=0$, and this contradicts the normalization of $\psi$.
\end{proof}
\begin{proposition}
For each $j\in\{1,\ldots, N\}$, the contour $\Gamma_{j,h}$ encircles at most one eigenvalue of $\Loh$ (with \textbf{geometric} multiplicity).
\end{proposition}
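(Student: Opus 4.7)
The plan is to argue by contradiction, leveraging that the projection $\Pi_{j,h}$ of $\widehat{\mathscr{L}}_h$ has rank one (from the preceding proposition). Suppose that $\Gamma_{j,h}$ encircles two linearly independent eigenvectors $\psi_1,\psi_2$ of $\Loh$, normalized in $L^2$, with eigenvalues $\tilde\lambda_1,\tilde\lambda_2\in D_{j,h}$ (possibly equal). By Lemma~\ref{lem.microlocalisation}, each satisfies $\psi_i=\Op^{w,1}_1(\chi_\delta)\psi_i+\mathscr{O}(h^\infty)$, and since the symbols of $\Loh$ and $\widehat{\mathscr{L}}_h$ coincide on $\supp\chi_\delta$, this gives $\widehat{\mathscr{L}}_h\psi_i=\tilde\lambda_i\psi_i+\mathscr{O}(h^\infty)$.

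Let $\Gamma'_{j,h}$ denote the enlarged circle of radius $2h^{\frac{3}{2}-\kappa}$ around $\nu_j(h)$: it still lies in $\mathcal{R}_h$ and still encircles only the unique eigenvalue of $\widehat{\mathscr{L}}_h$ near $\nu_j(h)$, so the associated projection $\widehat\Pi_{j,h}:=\frac{1}{2i\pi}\oint_{\Gamma'_{j,h}}(z-\widehat{\mathscr{L}}_h)^{-1}\dd z$ is still of rank one. The Cauchy-integral manipulation already carried out in the proof of Proposition~\ref{prop:dimension1aumoins}, combined with the resolvent bound \eqref{equ:res.hat.L} on $\Gamma'_{j,h}$, yields $\widehat\Pi_{j,h}\psi_i=\psi_i+\mathscr{O}(h^\infty)$. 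Consequently the restriction of $\widehat\Pi_{j,h}$ to the two-dimensional $V=\mathrm{span}(\psi_1,\psi_2)$ has a non-trivial kernel, so pick a non-zero $\psi=\alpha\psi_1+\beta\psi_2$ in it; by linearity,
\[\|\psi\|\leq(|\alpha|+|\beta|)\mathscr{O}(h^\infty).\]

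The main obstacle is turning this inequality into an outright contradiction, since the two eigenvectors need not be orthogonal. If $\tilde\lambda_1=\tilde\lambda_2$, one chooses $\psi_1,\psi_2$ to form an orthonormal basis of the eigenspace, so that $|\alpha|+|\beta|\leq\sqrt{2}\|\psi\|$ and the displayed inequality forces $\psi=0$ for $h$ small, contradicting $\psi\neq 0$. If $\tilde\lambda_1\neq\tilde\lambda_2$, writing $g=\langle\psi_2,\psi_1\rangle$, the Gram bound $|\alpha|+|\beta|\leq\sqrt{2}\|\psi\|/\sqrt{1-|g|}$ forces $|g|\geq 1-\mathscr{O}(h^\infty)$; decomposing $\psi_2=g\psi_1+w$ with $\|w\|=\sqrt{1-|g|^2}=\mathscr{O}(h^\infty)$, the identity $(\Loh-\tilde\lambda_2)w=g(\tilde\lambda_2-\tilde\lambda_1)\psi_1$ forces $|\tilde\lambda_2-\tilde\lambda_1|=\mathscr{O}(h^\infty)$, an extreme clustering which, re-run through the same microlocalization and Cauchy-integral procedure along small contours encircling each $\tilde\lambda_i$ separately, is incompatible with the rank-one simplicity of the spectrum of $\widehat{\mathscr{L}}_h$ near $\nu_j(h)$ established in the preceding proposition.
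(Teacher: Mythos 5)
You have correctly identified the delicate point: for a non-selfadjoint operator, eigenfunctions associated with two distinct eigenvalues need not be orthogonal, and the paper's phrase ``normalized orthogonal eigenfunctions'' glosses over this. The paper's argument, as written, implicitly requires the two eigenvectors to span a plane with a uniformly bounded condition number, so that both remain microlocalized after orthogonalization; this is automatic only when the two eigenvalues coincide. Your treatment of the degenerate case $\tilde\lambda_1=\tilde\lambda_2$, choosing an orthonormal basis of the eigenspace, is correct and is essentially the paper's argument.

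Your treatment of the non-degenerate case, however, does not close. Two steps fail. First, the identity $(\Loh-\tilde\lambda_2)w=g(\tilde\lambda_2-\tilde\lambda_1)\psi_1$ yields $|\tilde\lambda_2-\tilde\lambda_1|=\mathscr{O}(h^\infty)$ only if you already possess an independent bound $\|(\Loh-\tilde\lambda_2)w\|=\mathscr{O}(h^\infty)$. You only know $\|w\|_{L^2}=\mathscr{O}(h^\infty)$; since $\Loh$ is unbounded, this does not follow automatically, and one would have to upgrade the microlocalization of $w$ to a graph-norm statement, which you do not do. Second, and more seriously, the final sentence is not an argument: ``extreme clustering\ldots is incompatible with the rank-one simplicity of the spectrum of $\widehat{\mathscr{L}}_h$'' does not follow. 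The rank-one property concerns $\widehat{\mathscr{L}}_h$ and says nothing against $\Loh$ having two eigenvalues separated by $h^\infty$. Nor can you ``re-run'' the Cauchy-integral argument on small contours around each $\tilde\lambda_i$ separately, because the resolvent bound \eqref{equ:res.hat.L} is only available at distance $\geq h^{\frac32-\kappa}$ from the spectrum, whereas such contours would have radius $\mathscr{O}(h^\infty)$.

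The basis-independent way to bypass this obstruction is precisely what the paper does in Proposition~\ref{prop.Lh.to.Lh0}: establish the \emph{operator-norm} bound $\|\hat\Pi_{j,h}-\hat\Pi^0_{j,h}\|<1$ directly, via a microlocal partition of unity and ellipticity at infinity in $X_1$, which forces the two Riesz projections to have the same rank without ever choosing a particular basis of eigenfunctions. Since geometric multiplicity is bounded by algebraic multiplicity, the present proposition is then a corollary, and the near-parallel eigenvector pathology never arises.
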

\begin{proof}
  If it is not the case, a contour $\Gamma_{j,h}$ encircles at least
  two eigenvalues $\lambda$ and $\mu$ associated with normalized
  orthogonal eigenfunctions $\varphi$ and $\psi$, respectively. We
  have $\hat \Pi_{j,h}^0\varphi=\varphi$ and
  $\hat \Pi_{j,h}^0\psi=\psi$. Then, the resolvent formula gives that
\[\begin{split}
\hat\Pi_{j,h}-\hat\Pi^0_{j,h}&=\frac{1}{2i\pi}\int_{\Gamma_{j,h}}\left((z-\widehat{\mathscr{L}}_h)^{-1}-(z-\Loh)^{-1}\right)\dd z\\
&=\frac{1}{2i\pi}\int_{\Gamma_{j,h}}(z-\Loh)^{-1}(\widehat{\mathscr{L}}_h-\Loh)(z-\widehat{\mathscr{L}}_h)^{-1}\dd z\,.\end{split}\]
By the microlocalization Lemma \ref{lem.microlocalisation} on $\varphi$ and $\psi$, we get:
\[\hat\Pi_{j,h}\varphi= \hat \Pi_{j,h}^0 \varphi+\mathscr{O}(h^\infty) = \varphi + \mathscr{O}(h^{\infty}) \,,\quad \hat\Pi_{j,h}\psi=\psi+\mathscr{O}(h^\infty) \,.\]
This implies that the range of $\hat\Pi_{j,h}$ is at least two, and this is a contradiction.
\end{proof}
In fact, we can even prove that each $\Gamma_{j,h}$ encircles exactly one eigenvalue (with \textbf{algebraic} multiplicity).
\begin{proposition}\label{prop.Lh.to.Lh0}
For each $j\in\{1,\ldots, N\}$, the contour $\Gamma_{j,h}$ encircles exactly one eigenvalue of $\Loh$ (with \textbf{algebraic} multiplicity).
\end{proposition}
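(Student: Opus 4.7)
The plan is to show $\dim\mathrm{Ran}\,\hat\Pi^0_{j,h} = 1$, where
\[
\hat\Pi^0_{j,h}=\frac{1}{2i\pi}\int_{\Gamma_{j,h}}(z-\Loh)^{-1}\dd z.
\]
Since the previous proposition gives $\dim\mathrm{Ran}\,\hat\Pi_{j,h}=1$, the strategy is to compare $\hat\Pi^0_{j,h}$ with $\hat\Pi_{j,h}$ via a microlocal argument. The crucial fact is that $\widehat{\mathscr{L}}_h-\Loh$ is an $h$-pseudodifferential operator whose Weyl symbol vanishes where $\chi_\delta=1$, and is therefore $\mathscr{O}(h^\infty)$ on any state microlocalized near $\{X_1=0\}$.

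A preliminary step is to establish a polynomial resolvent estimate $\|(z-\Loh)^{-1}\|\leq C h^{\kappa-3/2}$ for $z\in\Gamma_{j,h}$, parallel to~\eqref{equ:res.hat.L}. This can be obtained by repeating the Grushin construction of Sections~\ref{sec.4.Parametrix}--\ref{sec.5.Reduction} with $\Loh$ in place of $\widehat{\mathscr{L}}_h$; the resulting effective operator is $\Peff$ modulo $\mathscr{O}(h^\infty)$, because the two operators agree up to $\mathscr{O}(h^\infty)$ on microlocalized states and the relevant phase-space region is precisely where $\chi_\delta=1$.

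With this bound in hand, one integrates the resolvent identity
\[
(z-\Loh)^{-1}-(z-\widehat{\mathscr{L}}_h)^{-1} = (z-\Loh)^{-1}(\Loh-\widehat{\mathscr{L}}_h)(z-\widehat{\mathscr{L}}_h)^{-1}
\]
along $\Gamma_{j,h}$ to deduce $(\hat\Pi^0_{j,h}-\hat\Pi_{j,h})u = \mathscr{O}(h^\infty)\|u\|$ for every microlocalized $u$: the rightmost resolvent keeps microlocalized states microlocalized, $\Loh-\widehat{\mathscr{L}}_h$ then produces an $\mathscr{O}(h^\infty)$ remainder, and the remaining factors are polynomially bounded in $h$ while $|\Gamma_{j,h}|=\mathscr{O}(h^{3/2-\kappa})$. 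For the lower bound $\dim\mathrm{Ran}\,\hat\Pi^0_{j,h}\geq 1$, I take a unit generator $\varphi$ of $\mathrm{Ran}\,\hat\Pi_{j,h}$; the proof of Lemma~\ref{lem.microlocalisation} adapts to $\widehat{\mathscr{L}}_h$ because $p_h$ shares the coercive quadratic behaviour in $X_1$, so $\varphi$ is microlocalized, and one gets $\hat\Pi^0_{j,h}\varphi=\varphi+\mathscr{O}(h^\infty)\neq 0$ for $h$ small.

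For the upper bound, any $\psi\in\mathrm{Ran}\,\hat\Pi^0_{j,h}$ satisfies $(\Loh-\lambda)^k\psi=0$ for some $\lambda\in D_{j,h}$ and $k\geq 1$. An induction on $k$ extends the microlocalization lemma to generalized eigenfunctions: the argument tolerates an inhomogeneity $(\Loh-\lambda)\psi$ that, by the inductive hypothesis, is already microlocalized modulo $\mathscr{O}(h^\infty)$. Hence every such $\psi$ is microlocalized, and the previous step yields $\hat\Pi_{j,h}\psi=\psi+\mathscr{O}(h^\infty)$. The restriction of $\hat\Pi_{j,h}$ to $\mathrm{Ran}\,\hat\Pi^0_{j,h}$ is therefore injective for $h$ small, forcing $\dim\mathrm{Ran}\,\hat\Pi^0_{j,h}\leq\dim\mathrm{Ran}\,\hat\Pi_{j,h}=1$. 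The main obstacle is the quantitative resolvent bound: the preceding lemma only shows $\Gamma_{j,h}\subset\rho(\Loh)$ qualitatively, and upgrading this to a polynomial-in-$h$ norm bound requires reworking the Grushin parametrix around $\Loh$ itself.
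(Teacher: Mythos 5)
Your overall strategy --- compare $\hat\Pi^0_{j,h}$ to $\hat\Pi_{j,h}$ via the resolvent identity, using the fact that $\Loh-\widehat{\mathscr{L}}_h$ is microlocally $\mathscr{O}(h^\infty)$ near $\{X_1=0\}$ --- is the right one, and the way you handle the rank comparison (showing all relevant (generalized) eigenfunctions are microlocalized, then using a pointwise near-isometry of $\hat\Pi_{j,h}$ on $\mathrm{Ran}\,\hat\Pi^0_{j,h}$) is a workable alternative to what the paper does. However, you correctly identify the crux --- a polynomial-in-$h$ bound $\|(z-\Loh)^{-1}\|\leq C h^{\kappa-\frac32}$ on $\Gamma_{j,h}$ --- and the route you propose for it is the one that does not work. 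Redoing the Grushin parametrix of Sections~\ref{sec.4.Parametrix}--\ref{sec.5.Reduction} with $\Loh$ in place of $\widehat{\mathscr{L}}_h$ fails at the start: the operator-valued symbol of $\Loh$ does not belong to $S(\R^2,\mathcal{L}(B^2(\R),L^2(\R)))$ with the required uniform expansion in $h^{1/2}$, because the Taylor remainder terms (e.g.\ $h^{1/2}x_1\nabla\rond{B}\cdot X_1$, $h\xi_1^2(\nabla\rond{B}\cdot X_1)^2$, etc.) grow polynomially in $X_1$ and break the uniform bounds in $B^2(\R)$. The cutoff $\chi_\delta$ is introduced precisely to suppress these terms; the heuristic ``the two operators agree up to $\mathscr{O}(h^\infty)$ on microlocalized states'' is true but does not give you a parametrix on all of $L^2$, which is what a resolvent bound requires.

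The paper's actual mechanism for the resolvent bound is different and is the missing ingredient: a microlocal partition of unity $\chi_{1,h}+\chi_{2,h}=1$ in the $X_1$-variable, with $\chi_{2,h}$ supported far from $X_1=0$. On $\supp\chi_{2,h}$ the symbol $\widehat H^0$ of $\Loh$ is elliptic of size $\gtrsim |X_1|^2\gtrsim h^{-2\delta}$, so one gets the \emph{a priori} estimate~\eqref{eq.chi2h} directly from the coercivity, with no Grushin reduction at all. On $\supp\chi_{1,h}$ the operator $\Loh$ coincides with $\widehat{\mathscr{L}}_h$ up to $\mathscr{O}(h^\infty)$, and~\eqref{equ:res.hat.L} transfers. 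Combining the two yields $\|(\Loh-z)^{-1}\|\leq Ch^{-\frac32+\kappa}$ (and the refined $\|\chi^w_{2,h}(\Loh-z)^{-1}\|\leq Ch^{2\delta}$), after which the paper controls $\|\hat\Pi_{j,h}-\hat\Pi^0_{j,h}\|$ in full operator norm --- avoiding both the ``preservation of microlocalization by $(z-\widehat{\mathscr{L}}_h)^{-1}$'' claim and the extension of Lemma~\ref{lem.microlocalisation} to generalized eigenfunctions, which your route would still need to establish. So: your endgame is sound, but the resolvent estimate needs the ellipticity-at-infinity argument, not a Grushin construction on $\Loh$ itself.
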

\begin{proof}
The proof uses the ellipticity at infinity with respect to $X_1$. Let us consider a partition of the unity $\chi_{1,h}(X_1)+\chi_{2,h}(X_1)=1$ with $\mathrm{supp}\chi_{2,h}\subset\{|X_1|\geq h^{-\delta}\}$ and such that the operator $\chi^w_{1,h}(\widehat{\mathscr{L}}_h-\Loh)$ is $\mathscr{O}(h^\infty)$ (which is possible by definition of $\widehat{\mathscr{L}}_h$ and $\Loh$).

Let $N\in\N$. We have, for all $z\in\Gamma_{j,h}$, and all $v$,
\begin{equation}\label{eq.chi2h}
\|\chi^w_{2,h}(\Loh-z)^{-1}v\|\leq Ch^{2\delta}\|v\|+Ch^N\|(\Loh-z)^{-1}v\|\,.
\end{equation}
The estimate \eqref{eq.chi2h} follows by considering the equation
\[(\Loh-z)u=v\,,\]
and writing for instance that
\[(\Loh-z)\chi^w_{2,h}u=\chi^w_{2,h}v+[\chi^w_{2,h},\Loh]u\,,\]
so that
\[Ch^{-2\delta}\|\chi^w_{2,h}u\|\leq C\|v\|+Ch^{\delta}\|\underline{\chi}^w_{2,h}u\|+Ch^{N}\|u\|\,,\]
where the support of $\underline{\chi}_{2,h}$ is slightly larger than the one of $\chi_{2,h}$. By induction, we get \eqref{eq.chi2h}.
At this stage, we still do not control the whole resolvent $(\Loh-z)^{-1}$. By the resolvent formula, and the symbolic calculus, we see that
\[\chi^w_{1,h}\left[(\Loh-z)^{-1}-(\widehat{\mathscr{L}}_h-z)^{-1}\right]=\underbrace{\chi^w_{1,h}(\widehat{\mathscr{L}}_h-z)^{-1}\left[\widehat{\mathscr{L}}_h-\Loh\right]}_{=\mathscr{O}(h^{\infty})}(\Loh-z)^{-1}\,.\]
Therefore,
\begin{equation}\label{eq.chi1h}
\|\chi^w_{1,h}(\Loh-z)^{-1}v\|\leq \|(\widehat{\mathscr{L}}_h-z)^{-1}v\|+Ch^N\|(\Loh-z)^{-1}v\|\,.
\end{equation}
Combining \eqref{eq.chi2h} and \eqref{eq.chi1h}, we get
\[\|(\Loh-z)^{-1}v\|\leq C\|v\|+\|(\widehat{\mathscr{L}}_h-z)^{-1}v\|\leq \tilde C\left(1+\Vert (\widehat{\mathscr{L}}_h - z)^{-1} \Vert\right)\|v\|\,.\]
In particular, for all $z\in\Gamma_{j,h}$,
\[\|(\Loh-z)^{-1}\|\leq Ch^{-\frac32+\kappa}\,.\]
Coming back to \eqref{eq.chi2h}, we deduce that
\begin{equation}\label{eq.chi2h'}
\|\chi^w_{2,h}(\Loh-z)^{-1}\|\leq \tilde Ch^{2\delta}\,.
\end{equation}
Let us now estimate the difference of the spectral projections by
using the microlocal partition of the unity:
\begin{multline*}
\hat\Pi_{j,h}-\hat\Pi^0_{j,h}
=\frac{1}{2i\pi}\int_{\Gamma_{j,h}}\chi^w_{2,h}\left((z-\widehat{\mathscr{L}}_h)^{-1}-(z-\Loh)^{-1}\right)\dd z\\
+\frac{1}{2i\pi}\int_{\Gamma_{j,h}}\chi^w_{1,h}\left((z-\widehat{\mathscr{L}}_h)^{-1}-(z-\Loh)^{-1}\right)\dd z\,.
\end{multline*}
We get
\begin{equation*}
\|\hat\Pi_{j,h}-\hat\Pi^0_{j,h}\|
\leq C|\Gamma_{j,h}|h^{2\delta}+\frac{1}{2\pi}\left\|\int_{\Gamma_{j,h}}\underbrace{\chi^w_{1,h}(z-\widehat{\mathscr{L}}_h)^{-1}(\Loh-\widehat{\mathscr{L}}_h)}_{=\mathscr{O}(h^\infty)}\underbrace{(z-\Loh)^{-1}}_{=\mathscr{O}(h^{-\frac32+\kappa})}\dd z\right\|\,.
\end{equation*}
Thus, for $h$ small enough, we get
\[\|\hat\Pi_{j,h}-\hat\Pi^0_{j,h}\|<1\,,\]
and these projections have the same rank. In particular, the contour $\Gamma_{j,h}$ encircles as many eigenvalues (with algebraic multiplicity) of $\Loh$ as of $\widehat{\mathscr{L}}_h$ (\emph{i.e.}, exactly one).
\end{proof}

\section{On the spectrum of $\Peff$}\label{sec.7.Hitrik}
In this section, we give a
description of the spectrum in the disc $D(\mu_0,Ch)$ of $\Peff$ whose symbol is
\begin{equation*}
\mu^{\mathsf{eff}}_h(X) =\rond p(X)
+ h \rond p_1(X) \,,\quad X\in\R^2\,,
\end{equation*}
where
\[\rond p(X)= \rond B (X) + \rond{V}(X) \,,\]
and
\[\rond p_1(X)=\langle \mathsf{P}_2(X) u_{X}, u_{X} \rangle - \langle \mathsf{P}_1(X) (\mathsf{P}_0(X) - z)^{-1} \Pi^{\perp} \mathsf{P}_1(X) u_{X}, u_{X} \rangle\,.\]
We work under the assumptions of Theorem \ref{thm.nsa}. By using Assumption \ref{Assumption}, we may assume without loss of generality that $(u,v)=(1,0)$ and that $\Re \rond p$ has its unique minimum at $0$. In particular, we may write
\begin{equation}\label{eq.mueffexpansion}
\mu_h^{\mathrm{eff}}(X)=\mu_0+h\rond p_1(0)+Q_0(X)+\underbrace{R_3(X)+hR_1(X)}_{=R_h(X)}\,,
\end{equation}
with $R_1(X)=\mathscr{O}(|X|)$, $R_3(X)=\mathscr{O}(|X|^3)$, and 
\[Q_0(X)=\frac12\mathrm{Hess}\,\rond p(X,X)\,.\]
Under our assumptions, $\Re Q_0$ is positive. By translation, we may assume that $\mu_0=0$ and $\rond p_1(0)=0$.

\subsection{On the spectrum and resolvent of $Q_0^w$}
The spectrum and the resolvent of $Q_0^w$ are easy to describe in $D(0,Ch)$. We recall below these properties for the convenience of the reader. Some of the considerations below may be found in \cite{Davies99}, \cite[Chapter 14]{Helffer13}, \cite{Hitrik04}, \cite{Boulton02}, or \cite{PS06}.

\begin{proposition}
There exists $c_0\in\C^*$ such that for all $h>0$,
\[\mathrm{sp}(Q_0^w)=\{(2n-1)c_0 h\,, n\geq 1\}\,.\]
The spectrum is made of eigenvalues of algebraic multiplicity one.
Moreover there exists $D>0$ such that, for all $h>0$ and all $z\in D(0,Ch)\setminus\mathrm{sp}(Q_0^w)$,
\begin{equation}\label{eq.resQ0}
\|(z-Q_0^w)^{-1}\|\leq \frac{D}{\mathsf{dist}(z,\mathrm{sp}(Q_0^w))}\,.
\end{equation}
\end{proposition}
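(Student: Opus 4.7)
My plan is to reduce $Q_0^w$ to the standard harmonic oscillator by a complex linear symplectic change of coordinates, and then apply the spectral theorem to the real model.

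First, I would exploit the scale-covariance of the Weyl quantization of quadratic forms: the unitary dilation $U_h : u(x) \mapsto h^{1/4} u(h^{1/2}x)$ intertwines $Q_0^w$ at semiclassical parameter $h$ with $h$ times the operator $Q_0^w$ obtained at $h=1$. It therefore suffices to establish the statement at $h=1$, namely that $\mathrm{sp}(Q_0^w|_{h=1}) = \{(2n-1)c_0 : n \geq 1\}$ consists of algebraically simple eigenvalues, together with the resolvent estimate $\|(z-Q_0^w)^{-1}\| \leq D\cdot \mathrm{dist}(z, \mathrm{sp}(Q_0^w))^{-1}$ for every $z \notin \mathrm{sp}(Q_0^w)$. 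The multiplicative factor of $h$ will then transfer both the spectrum and the resolvent bound, preserving the constant $D$.

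Next, because $\Re Q_0$ is positive definite on $\R^2$, $Q_0$ is an elliptic complex quadratic form in the sense of Hörmander. Its Hamilton map $F$, defined by $Q_0(X,Y) = \sigma(X, FY)$, is a nondegenerate element of $\mathrm{End}(\C^2)$ whose eigenvalues form a pair $\pm i c_0$ with $\Re c_0 > 0$; this $c_0 \in \C^*$ is the constant in the statement. By the classical reduction theory of elliptic quadratic forms (see for instance \cite{Hitrik04} or \cite[Chapter 14]{Helffer13}), there exists a \emph{complex} linear symplectic transformation on $\C^2$ sending $Q_0$ to the normal form $c_0(x^2+\xi^2)$. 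Lifting this transformation via the complex metaplectic representation yields a bounded invertible operator $M$ on $L^2(\R)$, in general \emph{not} unitary, such that
\[M\, Q_0^w\, M^{-1} = c_0(x^2 + D_x^2)\]
as closed operators on the common domain $B^2(\R)$.

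Finally, $H := x^2 + D_x^2$ is the standard selfadjoint harmonic oscillator with algebraically simple eigenvalues $\{2n-1 : n \geq 1\}$, so $\mathrm{sp}(c_0 H) = \{(2n-1)c_0 : n\geq 1\}$ consists of algebraically simple eigenvalues and, by the spectral theorem, $\|(z - c_0 H)^{-1}\| = \mathrm{dist}(z, \mathrm{sp}(c_0 H))^{-1}$. Undoing the conjugation by $M$ gives $\mathrm{sp}(Q_0^w) = \mathrm{sp}(c_0 H)$ with the same algebraic multiplicities and
\[\|(z-Q_0^w)^{-1}\| \leq \|M\|\,\|M^{-1}\| \cdot \mathrm{dist}(z, \mathrm{sp}(Q_0^w))^{-1},\]
so we may take $D := \|M\|\|M^{-1}\|$, which is independent of $z$ and of $h$. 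The principal obstacle in this plan is the justification of the complex symplectic reduction and the boundedness/invertibility of the intertwining operator $M$: because the symplectic map is complex, $M$ is not a standard metaplectic unitary but an FBI-type transform, and one must invoke (or reproduce) the classical construction in the cited references to ensure that $M$ and $M^{-1}$ both belong to $\mathcal{L}(L^2(\R))$.
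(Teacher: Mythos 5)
Your plan is a clean idea, but it contains a genuine gap that cannot be repaired, and it would in fact prove something false. The crucial claim is that the complex linear symplectic reduction of $Q_0$ to $c_0(x^2+\xi^2)$ can be quantized as a bounded invertible operator $M$ on $L^2(\R)$ (with bounded inverse). This is not true. In the model case $Q_0=\xi^2+e^{i\alpha}x^2$ with $\alpha\neq0$ (to which the paper reduces by a genuine \emph{real} metaplectic/unitary transformation), the complex symplectic normal form corresponds to the analytic dilation $M u(x)=u(e^{-i\alpha/4}x)$, which is only defined on a dense set of analytic vectors and is unbounded on $L^2(\R)$. You flag this issue yourself and hope to resolve it by appealing to the cited references, but they do not provide such an $M$; in the references, the quantized complex symplectic map intertwines $L^2(\R)$ with a weighted Bargmann space $H_\Phi$, not with $L^2(\R)$ itself.

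If such a bounded similarity $M Q_0^w M^{-1}=c_0 H$ existed, $Q_0^w$ would be similar to a normal operator: its Riesz projections would be uniformly bounded in norm by $\|M\|\|M^{-1}\|$ and it would satisfy a \emph{global} resolvent bound $\|(z-Q_0^w)^{-1}\|\leq D\,\mathrm{dist}(z,\mathrm{sp}\,Q_0^w)^{-1}$ for \emph{all} $z$. Both conclusions are known to fail for the complex harmonic oscillator when $\alpha\neq0$: the norms of the spectral projections grow exponentially, and the resolvent norm grows exponentially along rays inside the sector between the two half-lines $\R_+$ and $e^{i\alpha}\R_+$ even though the distance to the spectrum grows linearly (Davies~\cite{Davies99}, and \cite[Chapter 14]{Helffer13}). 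This is exactly why the statement you are asked to prove is restricted to $z\in D(0,Ch)$, i.e., after rescaling, to a \emph{fixed compact} disc $D(0,C)$. The paper's proof exploits precisely this locality: it reduces $Q_0$ to $\xi^2+e^{i\alpha}x^2$ via \emph{real} symplectic (hence unitary metaplectic) transformations, identifies the eigenfunctions explicitly as complex-scaled Hermite functions (these are honest $L^2$ functions, even though no bounded scaling operator produces them), establishes algebraic simplicity via analytic perturbation theory in $\alpha$, and then obtains the resolvent bound purely locally: each eigenvalue in $D(0,C)$ is an isolated simple pole of the resolvent, and there are only finitely many of them in that compact disc, so the bound follows by compactness. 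Your approach, if it worked, would give a stronger global statement; since that global statement is false, the missing ingredient (a bounded $M$ with bounded inverse on $L^2(\R)$) cannot be supplied.
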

\begin{proof}
By using the homogeneity of $Q_0^w$ and the rescaling $x=h^{\frac12}y$, we may assume that $h=1$. Then, we write
\[Q_0=\Re Q_0+i\Im Q_0\,.\]
Since $\Re Q_0$ is positive, up to a linear symplectic transformation, we may assume (thanks to the metaplectic representation) that
\[Q_0(X)=c(x^2+\xi^2)+i Q_1(X)\,,\]
where $Q_1$ is a real quadratic form and $c>0$. Up to a Euclidean rotation, we may assume that $Q_1(X)=ax^2+b\xi^2$ with $(a,b)\in\R^2$. Thus,
\[Q_0(X)=(c+ia)x^2+(c+ib)\xi^2\,.\]
After dividing by $c+ib$ and rescaling, we are reduced to
\[Q_0(X)=\xi^2+e^{i\alpha}x^2\,,\qquad \alpha\in[0,\pi)\,.\] The
complex harmonic oscillator $Q_0^w$ has non-empty resolvent set (since
it is a sectorial operator) and compact resolvent\footnote{These
  elementary properties follow from the inequality
  \[\Re\left( e^{-i\frac\alpha2}\langle
      Q_0^w\psi,\psi\rangle\right)\geq\cos\left(\frac\alpha2\right)\left(\|\psi'\|^2+\|x\psi\|^2\right)\geq\cos\left(\frac{\alpha}{2}\right)\|\psi\|^2\,.\]}
(and thus its spectrum is discrete). Considering the classical Hermite
functions $f_n=e^{-\frac{x^2}{2}}P_n$, we see that the functions
$g_n(x)=f_n(e^{i\alpha/4}x)$ are eigenfunctions of $Q_0^w$ associated
with the eigenvalues $(2n-1)e^{i\frac\alpha2}$. Moreover, the closure
of ${\mathrm{span} (\overline{g_n}\,, n\in\N^*)}$ being $L^2(\R)$, we
easily see that, if $\lambda$ is eigenvalue, it must be in the form
$\lambda=(2n-1)e^{i\frac\alpha2}$. This gives the announced
description of the spectrum.

We can also check that the eigenvalues are geometrically simple by using Wronskian considerations. In fact, we can see that they are algebraically simple by using the analytic perturbation theory of Kato (see \cite[Chapter VII, § 2]{Kato}) with respect to $\alpha\in[0,\pi)$. Indeed, the family $-\partial_x^2+e^{i\alpha}x^2$ is analytic of type (A) (its domain is $B^2(\R)$ and thus it does not depend on $\alpha\in[0,\pi)$) and the eigenvalues (which are explicit!) continuously move with respect to $\alpha$. The rank of the associated Riesz projection is then constant, equal to $1$ (the rank when $\alpha=0$).

Let us now discuss the resolvent estimate. Since each eigenvalue is simple and
isolated, the resolvent has a simple pole there, which gives the
required estimate~\eqref{eq.resQ0} in a small neighborhood of the
eigenvalue. Since the disc of radius $C$ contains only a finite number
of eigenvalues, the result follows.

\end{proof}
Let us now explain why the spectrum of $Q_0^w$ gives an approximation of the spectrum of $\mathsf{P}_h^{\mathrm{eff}}$ in $D(0,Ch)$.

\subsection{Locating the spectrum of $\Peff$}
The spectrum of $\Peff$ in $D(0,Ch)$ is close to the one of $Q_0^w$. Let us consider $\lambda\in\mathrm{sp}(\Peff)\cap D(0,Ch)$ and $\psi$ be a corresponding eigenfunction. We have
\[\Peff\psi=\lambda\psi\,.\]
Similarly to Lemma \ref{lem.microlocalisation}, one can check that $\psi$ is microlocalized near $X=0$ at a scale $h^{\delta}$ with $\delta\in\left(0,\frac12\right)$. We infer that
\[\|(Q_0^w-\lambda)\psi\|\leq \tilde Ch^{3\delta}\|\psi\|\,.\] 
With \eqref{eq.resQ0}, we get
\[\mathsf{dist}(\lambda,\mathrm{sp}(Q_0^w))\leq \check Ch^{3\delta}\,.\]
Therefore, when $h$ is small enough, the spectrum of $\Peff$ is close to the one of $Q_0^w$ at a distance bounded by $h^{\frac32-\kappa}$ for all $\kappa>0$. 
\subsection{Comparison of the spectral projections and resolvent bound}
Let us explain why there is exactly one simple eigenvalue of $\Peff$ in each disc $D(\mu,h^{\frac32-\kappa})$ with $\mu\in D(0,Ch)\cap\mathrm{sp}(Q_0^w)$.

It is enough to prove that the
Riesz projections associated with $\Peff$ and $Q_0^w$ have the same rank. More precisely, we let
\[ \Pi^{\mathrm{eff}}_h =-\frac{1}{2 i \pi} \int_{\mathscr{C}_{h}} ( \Peff - z)^{-1} \dd z \,, \quad \Pi^0_h =- \frac{1}{2i \pi} \int_{\mathscr{C}_h} ( Q_0^w - z)^{-1} \dd z \,, \]
where $\mathscr{C}_h$ is the circle of center $\mu$ and radius $h^{\frac32-\kappa}$. We can estimate the norm $\Vert \Pi^{\mathrm{eff}}_h - \Pi^0_h \Vert$ by using the same method as in Proposition \ref{prop.Lh.to.Lh0}. 

Let $\chi_1, \chi_2 \in \Cinf(\R^2)$ be cutoff functions such that $\chi_1 + \chi_2 = 1$ and $\chi_2$ supported in $\{|X|\leq h^{\delta}\}$. Then, we use the resolvent formula to get
\[ \chi_2^w (\Pi^{\mathrm{eff}}_h - \Pi^0_h) = \frac{1}{2i\pi} \int_{\mathscr{C}_h} \underbrace{\chi_2^w (Q_0^w - z)^{-1}R_h^w}_{\mathscr{O}(h^{-\frac32+\kappa}(h^{3\delta}+h^{1+\delta}))} (\Peff - z)^{-1} \dd z \,, \]
where $R_h$ is defined in \eqref{eq.mueffexpansion}. By choosing $\delta$ close enough to $\frac12$, we get, for some $\alpha>0$,
\[ \Vert \chi_2^w (\Pi^{\mathrm{eff}}_h - \Pi^0_h ) \Vert \leq C h^{\alpha} \int_{\mathscr{C}_h} \Vert (\Peff -z)^{-1} \Vert \dd z \,. \]
Since $\chi_1$ is supported in $\{|X|\geq ch^{\delta}\}$, we have, for all $z\in D(0,Ch)$,
\begin{equation}\label{eq.chi1res}
\Vert \chi_1^w (\Peff - z)^{-1} \Vert\leq C h^{-2 \delta} \,,\quad \Vert \chi_1^w (Q_0^w - z)^{-1} \Vert\leq C h^{-2 \delta}\,.
\end{equation}
We deduce that
\[ \Vert \chi_1^w (\Pi^{\mathrm{eff}}_h - \Pi^0_h ) \Vert \leq C h^{\frac32-\kappa} h^{-2 \delta} \,. \]
Summing up the $\chi_2$ and the $\chi_1$ parts, we deduce that
\begin{equation}\label{eq.006}
\Vert \Pi^{\mathrm{eff}}_h - \Pi^0_h \Vert \leq C h^{\frac32-\kappa - 2 \delta} + C h^{\alpha} \int_{\mathscr{C}_h} \Vert (\Peff -z)^{-1} \Vert \dd z \,. 
\end{equation}
We must estimate the resolvent appearing in the right-hand-side. 

For all $z\in D(0,Ch)$ such that $\mathsf{dist}(z,\mathrm{sp}(Q_0^w))\geq h^{\frac32-\kappa}$,
\[ \Vert \chi_2^w \left( (\Peff -z)^{-1} - (Q_0^w -z)^{-1} \right) \Vert \leq C h^{-\frac32+\kappa}(h^{3\delta}+h^{1+\delta})\|(\Peff-z)^{-1}\| \,.\]
Thus, with \eqref{eq.chi1res},
\[ \Vert (\Peff -z)^{-1} \Vert \leq \Vert (Q_0^w-z)^{-1} \Vert + C h^{-2 \delta} + C \underbrace{h^{-\frac32+\kappa}(h^{3\delta}+h^{1-\delta})}_{=o(1)}  \Vert (\Peff -z)^{-1} \Vert \,, \]
which yields
\[ \Vert (\Peff -z)^{-1} \Vert \leq C\Vert (Q_0^w-z)^{-1} \Vert + C h^{-2 \delta}\,.\]
With \eqref{eq.resQ0}, we deduce that, for all $z\in D(0,Ch)$ such that $\mathsf{dist}(z,\mathrm{sp}(Q_0^w))\geq h^{\frac32-\kappa}$,
\begin{equation}\label{eq.resboundPeff}
\begin{split} 
\Vert (\Peff -z)^{-1} \Vert \leq \frac{C}{\mathsf{dist}(z,\mathrm{sp}(Q_0^w))} + C h^{-2 \delta}&=\frac{C+C h^{-2 \delta}\mathsf{dist}(z,\mathrm{sp}(Q_0^w))}{\mathsf{dist}(z,\mathrm{sp}(Q_0^w))}\\
&\leq \frac{\tilde C}{\mathsf{dist}(z,\mathrm{sp}(Q_0^w))}\leq\tilde C h^{-\frac32+\kappa}\,.
\end{split}
\end{equation}
With \eqref{eq.006}, this provides us with
\begin{equation*}
\Vert \Pi^{\mathrm{eff}}_h - \Pi^0_h \Vert \leq C h^{\frac32-\kappa - 2 \delta} + 2\pi C\tilde C h^{\alpha}=o(1)\,. 
\end{equation*}
Therefore, for $h$ small enough, $\Vert \Pi^{\mathrm{eff}}_h - \Pi^0_h \Vert<1$, and the spectral projections have the same rank (that is rank one). We deduce that there is exactly one eigenvalue of $\Peff$ at a distance of $h^{\frac32-\kappa}$ near the spectrum of $Q_0^w$ in the disc $D(0,Ch)$. With \eqref{eq.resboundPeff}, this implies that, for all $z\in D(0,Ch)$ such that $\mathsf{dist}(z,\mathrm{sp}(Q_0^w))\geq h^{\frac32-\kappa}$, 
\[\Vert (\Peff -z)^{-1} \Vert \leq \frac{C}{\mathsf{dist}(z,\mathrm{sp}(\Peff))}\,.\]

\bibliographystyle{abbrv}
\bibliography{biblio-MRV.bib}

\end{document}